\newtheorem{theorem}{\textbf{Theorem}}[section]
\newtheorem{lemma}[theorem]{\textbf{Lemma}}
\newtheorem{proposition}[theorem]{\textbf{Proposition}}
\newcommand{\e}{\epsilon}
\newcommand{\ds}{\displaystyle}
\theoremstyle{remark}
\newtheorem{remark}[theorem]{\textbf{Remark}}
\newcommand{\cacher}[1]{}
\def\cT{\mathcal{T}}
\def\ds{\displaystyle}
\def\cC{\mathcal{C}}
\def\cE{\mathcal{E}}
\def\cA{\mathcal{A}}
\def\D{\mathrm{diam}}
\def\cG{\mathcal{G}}
\def\vy{\mathbf{y}}
\def\vF{\mathbf{F}}
\def\vtau{\mathbf{\tau}}
\def\Jac{\mathrm{Jac}}
\def\yhv{\vy_h}
\def\wchi{\widetilde{\chi}}
\def\wh{\widehat}
\def\qedclaim{\hfill$\triangle$\smallskip}
\def\uy{\overline{y}}
\def\utau{\overline{\tau}}
\def\dmax{d_{\mathrm{max}}}
\title{On the diameter of random planar graphs}
\author{Guillaume Chapuy$^\ast$ \and \'Eric Fusy$^\star$ \and Omer
Gim\'enez$^\dagger$ \and Marc Noy$^\ddagger$}
\address{ \rm $^\ast$CNRS, LIAFA, UMR 7089, Universit\'e Paris Diderot - Paris 7,
Case 7014, 75205 Paris Cedex 13, France.}
\address{ \rm
$^\star$CNRS, LIX, UMR 7161, \'Ecole Polytechnique, 91128 Palaiseau Cedex, France.}
\address{ \rm
$^\dagger$Dept. de Llenguatges i Sistemes Inform\`atics, Universitat Polit\`ecnica de Catalunya,
Barcelona, Spain.}
\address{ \rm 
$^\ddagger$Dept. de Matem\`atica Aplicada II, Universitat Polit\`ecnica de
Catalunya, Barcelona, Spain. }
\address{ \rm G.C. and \'E.F.  partially supported by the European Research Council (grant ExploreMaps -- ERC StG 208471) and by the French Agence Nationale de la Recherche (grant Cartaplus -- ANR~12-JS02-001-01).}
\address{ \rm O.G. and M.N. partially supported by grants MTM2011-24097 and
DGR2009-SGR1040.}
\begin{document}

\begin{abstract}
We show that the diameter $\D(G_n)$ of a random  labelled  connected planar graph with $n$
vertices is equal to $n^{1/4+o(1)}$, in probability. More precisely, there exists
a constant $c>0$ such that
$$P(\D(G_n)\in(n^{1/4-\e},n^{1/4+\e}))\geq 1-\exp(-n^{c\e})$$ for $\e$ small enough and
$n\geq n_0(\e)$. We prove similar statements for 2-connected and 3-connected planar graphs and maps.
\end{abstract}

\maketitle

\section{Introduction}

A map is a connected planar graph with a given embedding in the plane.
The diameter of random maps  has attracted a lot of attention since the pioneering work by
Chassaing and Schaeffer~\cite{ChSc04} on the radius $r(Q_n)$ of
random quadrangulations with $n$ vertices, where they show that
$r(Q_n)$ rescaled by~$n^{1/4}$ converges as $n\to\infty$ to an
explicit continuous distribution related to the Brownian snake~\cite{LegSnake}.
This convergence was shown to hold for large families of planar
maps~\cite{MaMi,Mie06}, and it was conjectured that random maps of size $n$ rescaled by
$n^{1/4}$ converge in some sense to a continuum object, the \emph{Brownian
map}~\cite{MaMo,Leg}.
In recent years, several properties of the limiting object 
have been obtained
\cite{LegPa08, Mie08},
and the convergence result was proved very recently
independently by Miermont and Le Gall~\cite{Miermont:convergence,
LeGall:convergence}.
At the combinatorial level, the two-point
function of quadrangulations has surprisingly a simple exact expression,
a beautiful result found in~\cite{BoDFGu03}  that allows one to derive easily the
limit  distribution, rescaled by~$n^{1/4}$, of the distance between two randomly chosen vertices in a random quadrangulation.
In contrast, little is known about the profile of random \emph{unembedded}
connected planar graphs, even if it is strongly believed that the results should be similar as in the embedded case.
As a general remark, readers familiar with random graphs should observe that random planar graphs are in general more difficult to study than Erd\H os-R\'{e}nyi models, since the edges are \emph{not} drawn independently. 

Our main result in this paper is a large deviation statement for the diameter,
which strongly supports the belief that $n^{1/4}$ is the
right scaling order. We say that a property $A$, defined for all
values $n$ of a parameter, holds  asymptotically almost surely, a.a.s.\ for short, if
$$
    P(A) \to 1, \qquad \hbox{ as } n\to \infty.
   $$
 In this paper we need a certain rate of convergence of the probabilities. Suppose property $A $ depends on a real number $\epsilon >0$, usually very small. Then we say that $A$ holds a.a.s.\ with exponential rate if
there is a constant $c>0$, such that for every $\epsilon$ small enough there exists an integer $n_0(\epsilon)$ so that
\begin{equation}\label{eq:exp_small}
    P(\hbox{not } A) \le e^{-n^{c\epsilon}}    \qquad   \hbox{ for all  } n \ge n_0(\epsilon).
     \end{equation}

\medskip
The diameter of a graph (or map) $G$ is denoted by $\D(G)$.
The main results proved in this paper are the following.

\begin{theorem}\label{theo:main}
The diameter of a random connected labelled  planar graph with $n$  vertices is in the interval
$(n^{1/4- \e}, n^{1/4+\e})$ a.a.s.\ with exponential rate.
\end{theorem}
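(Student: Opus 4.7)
The plan is to build the diameter bound from the inside out, starting with 3-connected objects and lifting through the standard decompositions of planar graphs. By Whitney's theorem, 3-connected planar graphs are in bijection (up to reflection) with 3-connected planar maps, and via the Tutte decomposition a random 2-connected planar graph is obtained from a 3-connected ``core'' by substituting smaller 2-connected planar graphs at its edges; similarly a random connected planar graph is a tree of blocks hung off a largest block. I would prove the estimate first for a convenient family of maps (quadrangulations), then transfer it to 3-connected maps, and finally transport it along these two decompositions to get the result for 2-connected and connected planar graphs.

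\textbf{Step 1: exponential large deviations for maps.} For random rooted quadrangulations $Q_n$, I would revisit the Cori--Vauquelin--Schaeffer bijection with well-labelled plane trees: the distance from the root in $Q_n$ is essentially the label in the tree, so bounding $\D(Q_n)$ reduces to bounding the range of labels in a uniform random labelled tree of size $n$. Using the branching-structure of labelled trees together with stretched-exponential tail estimates for heights and label displacements (in the spirit of Addario-Berry--Reed's concentration inequalities for the height of simply generated trees), one can upgrade the known convergence $r(Q_n)/n^{1/4} \to $ (Brownian-snake functional) to the $\exp(-n^{c\e})$ form required by \eqref{eq:exp_small}. A standard surgery argument then transfers this to 3-connected maps via the decomposition of a quadrangulation into simple/irreducible pieces.

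\textbf{Step 2: lifting through the decompositions.} By Gim\'enez--Noy's asymptotic analysis, the random connected planar graph $G_n$ has a unique giant block of size $\alpha n + O(n^{2/3})$, whose 3-connected core also has linear size, both with exponentially small deviation probability. For the lower bound $\D(G_n) \ge n^{1/4-\e}$, it suffices to observe that the giant 3-connected core sits isometrically inside $G_n$ and apply Step~1 to it. For the upper bound $\D(G_n) \le n^{1/4+\e}$, I would show that all small pieces attached to the core --- the networks substituted at the virtual edges of the 3-connected decomposition and the small blocks dangling off the block-tree --- have sizes governed by a subcritical-type distribution, so that with probability at least $1-\exp(-n^{c\e})$ every such piece has at most $n^{4\e}$ vertices and, by induction on Step~1, diameter at most $n^\e$. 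Any geodesic of $G_n$ alternates between segments inside the core and detours through such small pieces, so summing gives $\D(G_n) \le \D(\mathrm{core}) \cdot n^{o(1)} + n^\e$.

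\textbf{Main obstacle.} The principal difficulty is Step~1: turning the diameter scaling for random maps from convergence in distribution (as provided by the Brownian-map theory of Le~Gall and Miermont) into the stretched-exponential tail bound required here. The Brownian-map compactness arguments do not yield concentration, and the diameter is not a Lipschitz functional of the label process of the CVS tree, so direct martingale concentration on trees is delicate. I expect the bulk of the technical work to be the explicit stretched-exponential tail estimates on the range of labels in uniform well-labelled trees, and then propagating these without losing in the exponent $c\e$ through the two levels of substitution.
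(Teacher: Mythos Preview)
Your overall architecture matches the paper's: quadrangulations via the Schaeffer bijection, then down through maps to 3-connected maps, across Whitney's theorem, and back up through 2-connected to connected planar graphs. But several of the steps you treat as routine are precisely where the paper has to work hardest, and two of your assertions are actually false.

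\textbf{Gap 1: the linear-size core does not come with exponential rate.} You write that $G_n$ has a giant block of size $\alpha n + O(n^{2/3})$ ``with exponentially small deviation probability''. The paper explicitly notes (remark after Lemma~\ref{lem:bigblockC}) that the known results on the linear-size block hold a.a.s.\ but \emph{not} with exponential rate; the core-size fluctuations are Airy-type and the relevant tails are not stretched-exponential in $n$. The paper therefore proves a weaker but sufficient statement: a.a.s.\ with exponential rate there is a block of size at least $n^{1-\e}$ (Lemma~\ref{lem:bigblockC}), and similarly a 3-connected component of size at least $m^{1-\e}$ inside a network (Lemma~\ref{lem:big3conn}). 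Both require a careful saddle-point argument on truncated generating functions; they are not consequences of Gim\'enez--Noy.

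\textbf{Gap 2: the weighted versions are essential, not optional.} When you substitute networks into the edges of a 3-connected core, the core of a random 2-connected planar graph with $m$ edges is \emph{not} a uniformly random 3-connected planar graph: its edge/vertex ratio is biased. The paper handles this by proving every map estimate (Theorems~\ref{theo:diamQx}, \ref{theo:DM}, \ref{theo:diam2c}, \ref{theo:3conn_maps}) with an arbitrary weight $x>0$ at vertices, uniformly over $x$ in compact intervals, and then choosing $x$ to match the conditioning (Lemma~\ref{lem:Nnm}). Your proposal never mentions this, and without it the transfer from 3-connected maps to 2-connected planar graphs does not go through.

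\textbf{Gap 3: the upper bound mechanism is wrong.} You claim that all pieces attached to the core have at most $n^{4\e}$ vertices. This is false: the second-largest block of $G_n$ typically has polynomial size (order $n^{2/3}$), and the networks substituted at virtual edges can be large too. The paper's upper bound (inequalities~\eqref{eq:ineq_block} and~\eqref{eq:diam3c}) instead controls three separate quantities: the diameter of the decomposition tree (Bv-tree or RMT-tree) is $\le n^\e$ because the block/brick equations are \emph{critical} rather than admissible (Lemmas~\ref{lem:height2}, \ref{lem:height3}); the maximum brick diameter is $\le n^{1/4+\e}$ by the 3-connected estimate; and, for the RMT case, the distance in $G$ between the endpoints of any virtual edge is $\le n^\e$ (Lemma~\ref{lem:maxde}). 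None of these follow from ``pieces are small''.

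Finally, your route from quadrangulations to 3-connected maps (``decomposition of a quadrangulation into simple/irreducible pieces'') is not the one that works here; the paper goes quadrangulations $\to$ general maps $\to$ 2-connected maps $\to$ 3-connected maps, at each step using that the core has a prescribed size with polynomially small (not exponentially small) probability, which is enough to transfer an exponentially-small failure bound by conditioning.
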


\begin{theorem}\label{theo:main2}
Let $1<\mu<3$.
The diameter of a random connected labelled planar graph with $n$
vertices and $\lfloor \mu n\rfloor$ edges
is  in the interval $(n^{1/4-\e},n^{1/4+\e})$ a.a.s.\ with exponential
rate.
\end{theorem}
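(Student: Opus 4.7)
\medskip

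\noindent\textbf{Proof plan for Theorem~\ref{theo:main2}.}
The strategy is to deduce Theorem~\ref{theo:main2} from (a refinement of) Theorem~\ref{theo:main} by an exponential-tilting argument that adjusts the edge density. For $y > 0$, let $P_y$ denote the probability measure on connected planar graphs with $n$ vertices in which each graph $G$ has weight proportional to $y^{|E(G)|}$. By the bivariate analytic-combinatorial analysis of the generating function $C(x,y)$ of connected planar graphs (Gim\'enez--Noy), for each $\mu\in(1,3)$ there is a unique $y=y(\mu)$ in the subcritical region such that under $P_{y(\mu)}$ the random variable $|E(G_n)|$ is asymptotically Gaussian with mean $\mu n+O(1)$ and variance of order $n$, and moreover a local limit theorem applies, giving
$$P_{y(\mu)}\bigl(|E(G_n)|=\lfloor \mu n\rfloor\bigr)=\Theta(n^{-1/2}).$$

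The first step is to show that the exponential-rate diameter estimate of Theorem~\ref{theo:main} remains valid under $P_y$ uniformly for $y$ in a compact neighbourhood of $y(\mu)$. The proof of Theorem~\ref{theo:main} (and its analogues for 2- and 3-connected graphs and maps, as claimed in the abstract) is naturally bivariate: it goes through the block decomposition of a connected graph into its 2-connected blocks, then through the core--kernel decomposition of a 2-connected planar graph into a 3-connected kernel, and finally through the $n^{1/4}$ estimate for random 3-connected maps. Each step only manipulates ratios of generating functions at their dominant singularity in $x$ for a fixed edge weight $y$, so an edge weight is harmless as long as $y$ stays in the analyticity domain. Carrying out these same estimates with the weight $y^{|E|}$ yields, for some $c>0$ depending only on a compact range of $\mu$,
$$P_{y(\mu)}\bigl(\D(G_n)\notin(n^{1/4-\e},n^{1/4+\e})\bigr)\le \exp(-n^{c\e}).$$

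The second step is the conditioning, which is immediate once the above holds: for the bad event $A_n=\{\D(G_n)\notin(n^{1/4-\e},n^{1/4+\e})\}$,
$$P\bigl(A_n\mid |E(G_n)|=\lfloor \mu n\rfloor\bigr)=\frac{P_{y(\mu)}(A_n,\,|E(G_n)|=\lfloor \mu n\rfloor)}{P_{y(\mu)}(|E(G_n)|=\lfloor \mu n\rfloor)}\le O(n^{1/2})\exp(-n^{c\e})\le \exp(-n^{c'\e})$$
for any $c'<c$ and $n$ large enough, since the polynomial prefactor is absorbed into the exponential. The main obstacle is the first step: one must check that every ingredient used to establish Theorem~\ref{theo:main} (largest block in a random connected planar graph, largest 3-connected kernel inside a random 2-connected graph, diameter bounds for 3-connected maps via their bijective encoding by labelled trees) goes through with an arbitrary edge weight $y=y(\mu)$ in place of $y=1$, and with the exponential rate $c$ uniform in $\mu$ on compact subsets of $(1,3)$. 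The delicate point is this uniformity, since as $\mu\to 1^+$ or $\mu\to 3^-$ one approaches the degenerate regimes (tree-like or triangulation-like), where the constants in the various subexponential estimates may blow up; restricting to a compact range of $\mu$ avoids this and suffices for the theorem as stated.
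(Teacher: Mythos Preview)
Your proposal is correct and matches the paper's approach exactly: the paper proves an edge-weighted version of Theorem~\ref{theo:main} (this is Theorem~\ref{theo:cx}, valid for any fixed edge weight $x>0$), then invokes the Gim\'enez--Noy result that for each $\mu\in(1,3)$ there is an $x=x(\mu)$ for which $P_x(|E(G_n)|=\lfloor\mu n\rfloor)=\Theta(n^{-1/2})$, and concludes by the same polynomial-vs-exponential conditioning argument you wrote. One minor remark: since the statement is for a fixed $\mu$, you only need the weighted diameter estimate at the single value $y=y(\mu)$, so the uniformity over a compact neighbourhood of $y(\mu)$ that you worry about is not actually required (the paper's Theorem~\ref{theo:cx} is stated only for fixed $x$).
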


These are the first results obtained on the diameter of random planar graphs. They give the right order of magnitude and show the connection to the well-studied problem of the radius of random quadrangulations. It is still open and seems technically very involved to show a limit distribution for the profile or radius of a random connected planar graph rescaled by
$n^{1/4}$.
Other extremal parameters that have been analyzed recently in random planar graphs using analytic techniques are the size of the largest $k$-connected component \cite{GNR,PS} and the maximum vertex degree \cite{DGN3,DGNPS}.

The results for planar graphs contrast with the so-called ``subcritical'' graph families,
such as trees, outerplanar graphs, and series-parallel graphs,
where the diameter is in the interval $(n^{1/2-\e},n^{1/2+\e})$
a.a.s.\ with exponential rate; see Section~\ref{sec:subcritical} at the end of the article.

Let us give a brief sketch of the proof.
Recall that a graph is $k$-connected if one needs to delete at least $k$ vertices
to disconnect it ($2$-connected graphs are assumed to be loopless,
 $3$-connected graphs are assumed to be loopless and simple).
First we prove the result for planar maps
via quadrangulations, using a bijection with labelled trees by Schaeffer that
keeps track of a distance parameter.
Then we prove the result for
2-connected maps using the fact that a random map has a large
2-connected core with non-negligible probability.
A similar argument allows us to extend the result to 3-connected
maps, which proves it also for 3-connected planar graphs, since by Whitney's theorem
they have a unique embedding in the sphere. We then reverse the
previous arguments and go first to 2-connected and then  to connected
planar graphs, but this  is not straightforward. One difficulty is
that the largest 3-connected component of a random 2-connected planar graph
does not have the typical ratio between number of edges and number
of vertices, and this is why we must study maps with a given weight
at vertices, so as to adjust the
ratio between edges and vertices. In addition, we must show that there is
a 3-connected component of size $n^{1-\e}$ a.a.s.\ with
exponential rate, and similarly for 2-connected components. Finally, we must show
that the height of the tree associated to the decomposition of a
2-connected planar graph into 3-connected components is at most $n^\e$,
and similarly for the tree of the decomposition of a connected planar graph
into 2-connected components.

\section{Preliminaries}
In this section we recall first some easy inequalities given by generating
functions. Then we describe the chain of correspondences and decompositions
that will allow us to carry large deviation estimates for the diameter,
starting from quadrangulations (and labelled trees associated to them)
and all the way down to connected planar graphs. In the sequel,
the diameter of a graph $G$ (whether a tree, a planar graph or a map) is denoted $\D(G)$.

\subsection{Saddle bounds and exponentially small tails}
Let $f(z)=\sum_nf_nz^n$ be a series with nonnegative coefficients
and let $x>0$ be a value such that $f(x)$ converges; in particular
$x$ is at most the radius of convergence $\rho$. Then we have the
following elementary  inequality for $n\geq 0$:
\begin{equation}\label{eq:saddle}
f_n\leq f(x)x^{-n}.
\end{equation}
When minimized over $x$, this inequality is called \emph{saddle-point bound}.

A bivariate version yields a
lemma that will be used several times; it provides a simple criterion
to ensure that the distribution of a parameter has an exponentially fast decaying
tail. First let us give some terminology. A \emph{weighted combinatorial class}
is a class of combinatorial objects (such as graphs, trees or maps) $\cA=\cup_n\cA_n$  endowed with a weight-function
$w:\cA\mapsto\mathbb{R}_+$. We write $|\alpha|=n$ if $\alpha\in\cA_n$.
The \emph{weighted distribution} in size $n$
is the unique distribution on $\cA_n$ proportional to the weight: $P(\alpha)\propto w(\alpha)$ for every $\alpha\in\cA_n$.

\begin{lemma}\label{lem:tail}
Let $\cA=\cup_n\cA_n$ be a weighted combinatorial class, $\chi:\cA\to\mathbb{N}$ a
parameter on $\mathcal A$, and let
$A(z,u)=\sum_{\alpha\in\cA}w(\alpha)z^{|\alpha|}u^{\chi(\alpha)}$.
Let $\rho >0$ be the
dominant singularity of $A(z,1)$, and let $A_n=[z^n]A(z)$. Assume that, for some $\alpha>0$,
$$
  A_n = \Omega(n^{-\alpha}\rho^{-n}).
$$
Assume also that there exists $u_0 >1$ such that $A(\rho,u_0)$ converges.

Then $\chi(R_n)\leq n^{\e}$ a.a.s.\ with exponential rate (under the weighted
distribution).
\end{lemma}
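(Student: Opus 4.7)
The plan is to run an exponential Markov (Chernoff-type) argument, using the parameter $u_0>1$ as an exponential tilt and the saddle bound \eqref{eq:saddle} as the only analytic input. Writing $R_n$ for a random element of $\cA_n$ under the weighted distribution and fixing a threshold $k$, the tail probability factors as
\begin{equation*}
P(\chi(R_n)>k) \;=\; \frac{1}{A_n}\sum_{\alpha\in\cA_n,\ \chi(\alpha)>k} w(\alpha).
\end{equation*}
On the event $\chi(\alpha)>k$ one has the trivial bound $1\leq u_0^{\chi(\alpha)-k}$, so multiplying by $w(\alpha)$ and summing over all $\alpha\in\cA_n$ gives
\begin{equation*}
\sum_{\alpha\in\cA_n,\ \chi(\alpha)>k} w(\alpha) \;\leq\; u_0^{-k}\sum_{\alpha\in\cA_n}w(\alpha)\,u_0^{\chi(\alpha)} \;=\; u_0^{-k}\,[z^n]A(z,u_0).
\end{equation*}

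Next I would apply the univariate saddle bound \eqref{eq:saddle} to the series $z\mapsto A(z,u_0)$ at $x=\rho$; this is legitimate since $A(\rho,u_0)$ converges by hypothesis, and it gives $[z^n]A(z,u_0)\leq A(\rho,u_0)\rho^{-n}$. Combined with the assumed lower bound $A_n\geq c_1 n^{-\alpha}\rho^{-n}$, this yields
\begin{equation*}
P(\chi(R_n)>k) \;\leq\; C\, n^\alpha\, u_0^{-k},\qquad C=A(\rho,u_0)/c_1.
\end{equation*}

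To conclude I would specialize to $k=\lfloor n^\e\rfloor$, so that $P(\chi(R_n)>n^\e)\leq C\exp(\alpha\log n - (\log u_0)\, n^\e + O(1))$. Since $\log u_0>0$ and $\alpha\log n=o(n^\e)$, for any fixed $c\in(0,1)$ (for instance $c=1/2$) and for $n$ large enough in terms of $\e$, the exponent is bounded above by $-n^{c\e}$, which is exactly the exponential rate \eqref{eq:exp_small}. There is no serious obstacle in this argument; the only point that deserves comment is why the hypothesis $A_n=\Omega(n^{-\alpha}\rho^{-n})$ appears, namely to ensure that the polynomial prefactor $n^\alpha$ arising from the denominator is safely absorbed by the superpolynomial gain $u_0^{-n^\e}$. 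In particular, any lower bound on $A_n$ that differs from $\rho^{-n}$ by at most a polynomial factor would suffice.
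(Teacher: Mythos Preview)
Your argument is correct and is essentially the same as the paper's: both exploit the exponential tilt $u_0>1$ together with the saddle bound~\eqref{eq:saddle} to obtain $P(\chi(R_n)>k)=O(n^{\alpha}u_0^{-k})$, then specialize to $k=n^{\e}$. The only cosmetic difference is that the paper bounds $P(\chi(R_n)=k)$ via the bivariate form of~\eqref{eq:saddle} and then (implicitly) sums, whereas you bound the tail directly via the Chernoff inequality $1\leq u_0^{\chi(\alpha)-k}$ followed by the univariate bound on $[z^n]A(z,u_0)$; the resulting estimate is identical.
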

\begin{proof}
We have $P(\chi(R_n) = k)=[z^nu^k]A(z,u)/[z^n]A(z,1)$. A bivariate
version of~\eqref{eq:saddle} ensures that $[z^nu^k]A(z,u)\leq
A(\rho,u_0)\rho^{-n}u_0^{-k}=O(\rho^{-n}e^{-ck})$, where
$c=\log(u_0)$. Hence $P(\chi(R_n) = k)=O(n^{\alpha}e^{-ck})$.
This directly implies that $\chi(R_n)\leq n^{\e}$ a.a.s.\ with exponential rate.
\end{proof}

\subsection{Maps}\label{sec:maps}
A \emph{planar map} (shortly
called a map here) is a connected unlabelled graph embedded in the
oriented sphere up to isotopic deformation. Loops and multiple edges are allowed.
 A \emph{rooted map} is a map where
an edge is marked and oriented. Rooting is enough to avoid symmetry issues
(this contrasts with unembedded planar graphs, where labelling vertices or edges
is necessary to avoid symmetries). 
The face to the left of the root is called the \emph{outer face};
this face is taken as the infinite face in plane representations (e.g. in Figure~\ref{fig:bij_quad_tree},
left part).
A \emph{quadrangulation} is a map where all faces have degree $4$. Notice that an isthmus contributes twice to the degree of a face.

\subsubsection{Labelled trees and quadrangulations}\label{sec:s_bij}
We recall Schaeffer's bijection (itself a reformulation of an earlier
bijection by Cori and Vauquelin~\cite{CoriVa})
between labelled trees and quadrangulations.
A \emph{rooted plane tree} is a rooted map with a unique face.
A \emph{labelled tree} is a rooted plane tree with an integer label $\ell_v\in\mathbb{Z}$ on each vertex $v$ so that the labels of the end-points of each edge $e=(v,v')$ satisfy $|\ell_v-\ell_{v'}|\leq 1$, and such that the root vertex has label $0$. The minimal (resp. maximal) label in the tree is denoted
$\ell_{min}$ (resp. $\ell_{max}$).
A \emph{bicolored} labelled tree is a labelled tree endowed with a 2-coloring of the vertices (in black and white) such that
vertices of odd labels are of one color and vertices of even labels are of the other color.
Such a tree is called \emph{black-rooted} (resp. \emph{white-rooted}) if the root-vertex is black
(resp. white).
A \emph{bicolored quadrangulation} is a quadrangulation endowed with a 2-coloring
of its vertices (in black and white) such that adjacent vertices have different colors.
Such a 2-coloring is unique once the color of a given vertex is specified.
A rooted quadrangulation will be assumed to be endowed with the unique 2-coloring
such that the root-vertex is black.


\begin{figure}
\begin{center}
\includegraphics[width=12cm]{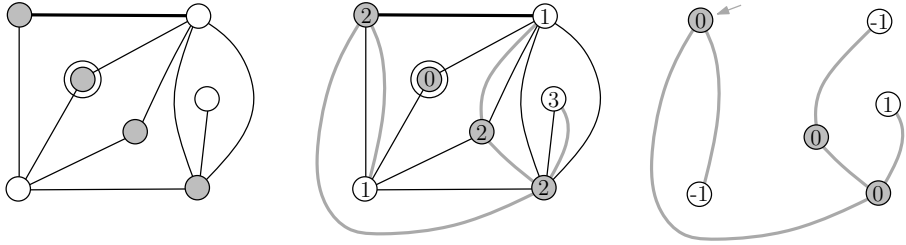}
 \end{center}
 \caption{Left: A bicolored quadrangulation with a marked vertex (surrounded) and a marked
edge (bolder).
Right: the associated bicolored labelled tree.}
 \label{fig:bij_quad_tree}
 \end{figure}

\begin{theorem}[Schaeffer~\cite{S-these}, Chapuy, Marcus, Schaeffer~\cite{ChMaSc09}]\label{theo:bij_quad_trees}
Bicolored quadrangulations with a marked vertex $v_0$ and a marked edge
are in bijection with bicolored labelled trees.
Each face of  a bicolored quadrangulation $Q$ corresponds to an
edge in the associated bicolored labelled
tree $\tau$.
Each non-marked vertex $v$ of $Q$ corresponds to a vertex $v$ of the same color in $\tau$, such that
 $\ell_v-\ell_{min}+1$ gives the distance from $v$ to $v_0$
in $Q$.
\end{theorem}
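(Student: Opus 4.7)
The plan is to exhibit two explicit constructions in opposite directions, check that they are mutual inverses, and then separately verify the distance property. The preliminary observation is that a quadrangulation is bipartite, so labelling each vertex $v$ by $\ell_v := d_Q(v,v_0)$ forces adjacent labels to differ by exactly $1$. Consequently the four labels around any face must realise one of two local patterns: a \emph{confluent} pattern $(i,i+1,i,i+1)$ or a \emph{simple} pattern $(i,i+1,i+2,i+1)$.

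For the forward map, I would follow Schaeffer's rule and draw exactly one new edge inside each face, a chord whose endpoints are specified by which of the two patterns occurs, then delete all original edges of $Q$ together with the vertex $v_0$. The remaining embedded graph is rooted using the marked edge of $Q$, and labels are shifted so the root carries label $0$; the bicolouring of vertices is inherited from the bipartition of $Q$. To check the result is a plane tree, I would use Euler's formula: $Q$ has $n$ faces, $n+2$ vertices, and $2n$ edges, so removing $v_0$ and the $2n$ original edges while adding $n$ new ones leaves $n+1$ vertices and $n$ edges. Connectedness is established by defining, for each corner of $Q$ at a vertex $v\neq v_0$, a canonical \emph{successor} corner of label $\ell_v-1$ reached by following the newly drawn chords, and verifying that iterating this operation yields a strictly decreasing sequence of labels that eventually reaches the root.

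For the inverse direction, starting from a bicoloured labelled tree $\tau$, I would adjoin an apex vertex $v_0$ of label $\ell_{min}-1$ and, around the unique face of $\tau$, connect each corner $c$ whose vertex has label $\ell$ to the next corner in counterclockwise order whose vertex has label $\ell-1$ (using $v_0$ as successor for corners of label $\ell_{min}$). A standard planarity argument ensures these arcs can be drawn without crossings, and a local inspection around each tree edge shows that every face of the resulting map is a quadrangle matching one of the two patterns. The marked edge of $Q$ is then recovered from the root corner of $\tau$. Checking that the two constructions are mutually inverse is a local face-by-face, respectively edge-by-edge, verification, and the face/edge bijection and same-colour statement in the theorem follow at once.

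The subtlest step, which I would treat last, is the distance formula $d_Q(v,v_0) = \ell_v - \ell_{min} + 1$. The inequality $d_Q(v,v_0) \leq \ell_v - \ell_{min} + 1$ is essentially free: iterating the successor from $v$ traces a path of length exactly $\ell_v - \ell_{min} + 1$ in $Q$ reaching $v_0$, because every arc used is an actual edge of $Q$ between vertices of consecutive labels. The reverse inequality is the main obstacle: any path in $Q$ from $v_0$ to $v$ induces a $\pm 1$ walk on labels starting at $\ell_{min}-1$ and ending at $\ell_v$, so it has length at least $\ell_v - \ell_{min} + 1$. The delicate point is reconciling the labelling conventions between the forward (distance-based) and inverse (shift-based) viewpoints and ruling out spurious shortcuts created by Schaeffer's local rule; both reduce to a case analysis on the two face patterns.
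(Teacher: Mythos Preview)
The paper does not actually prove this theorem: it is stated with attribution to Schaeffer's thesis and to \cite{ChMaSc09}, and the text immediately following simply refers the reader to those sources (``see~\cite{ChMaSc09} for a detailed description of the bijection''). So there is no ``paper's own proof'' to compare against; the theorem is used as a black box.

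Your sketch is the standard Cori--Vauquelin--Schaeffer construction found in those references, and the overall architecture (distance labelling, two face types, one chord per face, successor chains, inverse via corner successors around the tree) is correct. Two points are worth tightening if you flesh this out. First, the Euler count plus connectedness argument is fine in principle, but your successor chain decreases labels and so terminates at vertices of label $\ell_{\min}$, not at ``the root''; you still need to say why all such vertices lie in a single component (this is where one usually argues that the new edges, read around the contour, form a single tree rather than a forest). Second, the theorem here is stated for \emph{bicoloured} objects with a marked (unoriented) edge, which is how the paper absorbs the usual factor of $2$ in the CVS correspondence; your sketch inherits the bicolouring from the bipartition of $Q$ but does not spell out how the marked edge of $Q$ together with the colouring determines the root corner of $\tau$ (and conversely), so that the map is genuinely a bijection at the level of these decorated objects.
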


An example is shown in Figure~\ref{fig:bij_quad_tree}; see~\cite{ChMaSc09} for a detailed
description of the bijection.
Define the \emph{label-span} of $\tau$ as the quantity $L(\tau)=\ell_{max}(\tau)-\ell_{min}(\tau)$. It follows from the bijection in  Theorem~\ref{theo:bij_quad_trees} that  $L(\tau)+1$
is the radius of $Q$ centered at $v_0$. Hence
\begin{equation}\label{eq:diam_quad}
L(\tau)+1\leq \D(Q)\leq 2L(\tau)+2.
\end{equation}

\subsubsection{Quadrangulations and maps}\label{sec:bij_quad_maps}
We recall a classical bijection between rooted quadrangulations
with $n$ faces (and thus $n+2$ vertices)
and rooted maps with $n$ edges. Starting from $Q$
endowed with its canonical 2-coloring, add in each face a new edge
connecting the two diagonally opposed black vertices. Return the rooted map $M$
formed by the newly added edges and the black vertices, rooted at the
edge corresponding to the root-face of $Q$, and with same root-vertex as $Q$; see
Figure~\ref{fig:bij_quad_map}.
Conversely, to obtain $Q$ from $M$, add a new white
vertex $v_f$
inside each face $f$ of $M$ and add new edges from $v_f$
to every corner around $f$; then delete all edges from $M$, and take as root-edge
of $Q$
the one corresponding to the incidence root-vertex/outer-face in $M$.
\begin{figure}
\begin{center}
\includegraphics[width=12cm]{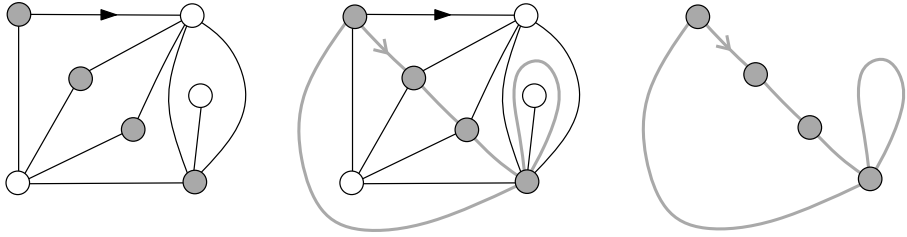}
 \end{center}
 \caption{Left: A rooted quadrangulation. Right: the associated rooted map.}
 \label{fig:bij_quad_map}
 \end{figure}
Clearly, under this bijection, vertices of a map correspond to black vertices of the associated quadrangulation, and faces correspond to white vertices.
Let $M$ be a
rooted map with $n$ edges and let $Q$ be the associated rooted
quadrangulation (with $n+2$ vertices). Every path $b_1b_2\dots b_k$
in $M$ yields a path $b_1w_1b_2\dots w_{k-1}b_k$ in $Q$, where
$w_i$ is the white vertex corresponding to the face to the left of
$(b_i,b_{i+1})$. Hence $\D(Q) \le 2\ \!\D(M)$.
Let $x=b_1w_1b_2w_2\dots b_k=y$ be a
path in $Q$, where the $b_i$ are black and the $w_i$ are
white. Let $f_i$ be the  face in $M$ corresponding to
$b_i$. Then we can find a path in $M$ between $x$ and $y$ of length at most
$k + \mathrm{deg}(f_1) + \cdots + \mathrm{deg}(f_k)$. Therefore, calling $\Delta(M)$ the maximal
face-degree in $M$, we obtain $\D(M)\leq \D(Q)\cdot\Delta(M)$. We thus
obtain the following inequalities that we use for estimating the diameter
of random maps from estimates of the diameter of random quadrangulations:
\begin{equation}\label{eq:diamMQ}
 \D(Q)/2\leq \D(M)\leq \D(Q)\cdot\Delta(M).
 \end{equation}

\subsubsection{The 2-connected core of a map}

\begin{figure}
\begin{center}
\includegraphics[width=12.5cm]{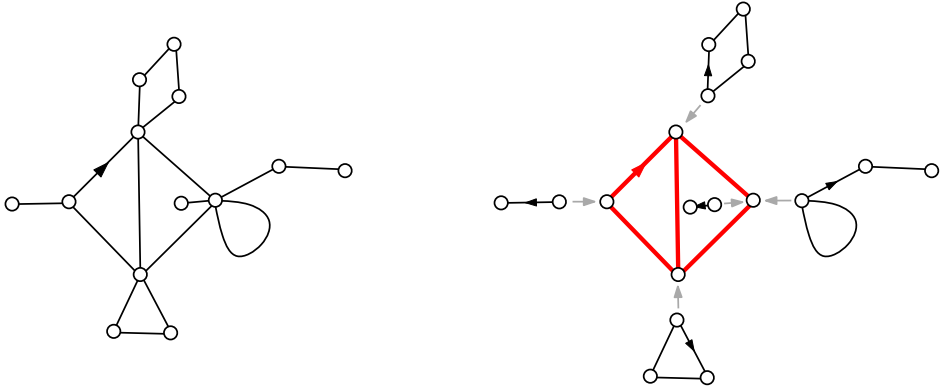}
 \end{center}
 \caption{A rooted map is obtained from a $2$-connected map (the core) where
at each corner a rooted map is possibly inserted.}
 \label{fig:2c-core}
 \end{figure}

It is convenient here to consider the map consisting of a single loop as 2-connected
(all 2-connected maps with at least two edges are loopless).
As described by Tutte in~\cite{Tu63}, a rooted map $M$ is obtained by taking
a rooted 2-connected map $C$, called the \emph{core} of $M$, and then
inserting at each corner $i$ of $C$ an arbitrary rooted map $M_i$;
see Figure~\ref{fig:2c-core}.
The maps $M_i$ are called the \emph{pieces} of $M$.
The following inequalities will be used to estimate the diameter
of  random rooted 2-connected maps from estimates of the  diameter of random rooted maps:
\begin{equation}
\D(C)\leq \D(M)\leq \D(C)+2\cdot\mathrm{max}_i(\D(M_i)).
\end{equation}
The first inequality is trivial, and the second one follows
from the fact that a diametral path in $M$ either stays in a single piece,
or it connects two different pieces while traversing edges of $C$.

\subsubsection{The 3-connected core of a 2-connected map}\label{sec:3ccore}

\begin{figure}
\begin{center}
\includegraphics[width=12.5cm]{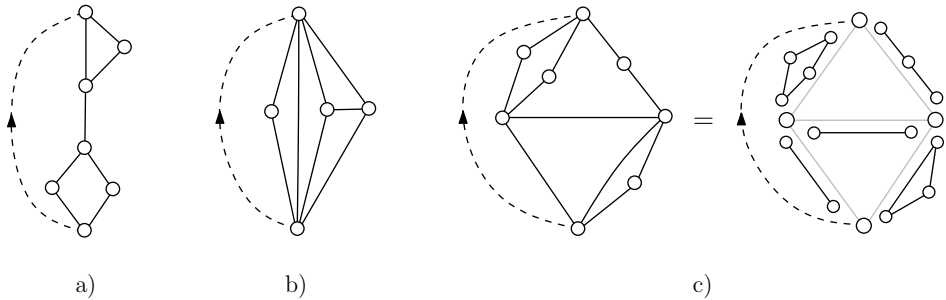}
 \end{center}
 \caption{(a) A network made of $3$ networks assembled in series. (b)  A network made of $3$ networks (one of which is an edge) assembled in parallel. (c) A network with a 3-connected core
(which is a $K_4$) where each
edge is substituted by a network.}
 \label{fig:3c-core}
 \end{figure}

A \emph{plane network} is a map $M$ with two marked vertices in the outer face,
called the \emph{poles} of $M$ ---the $0$-pole and the $\infty$-pole---
such that adding an edge $e$ between these two vertices yields a rooted
$2$-connected map, called the \emph{completed map} of the network.
Conversely a plane network is just obtained from a 2-connected map with at least two edges
by deleting the root-edge, the origin and end of the root-edge being distinguished respectively
as the $0$-pole and the $\infty$-pole.
 A \emph{polyhedral network} is a plane network such that the poles are not adjacent and such that
the completed map is $3$-connected.
As shown by Tutte~\cite{Tu63} (see Figure~\ref{fig:3c-core}), a plane network
$C$ is either a series or parallel composition of plane networks,
or it is obtained from a polyhedral network $T$ where each edge $e$
is possibly substituted by a plane network $C_e$, identifying the end-points
of $e$ with those of the root of $C_e$. In that case
 $T$ is called the \emph{3-connected core} of $C$ and the components $C_e$ are
 called the \emph{pieces} of $C$.
Calling $d_e$ the degree of the root face of $C_e$, we obtain
the following inequalities, which will be used to get a diameter estimate
for random 3-connected maps from a diameter estimate for random 2-connected maps:
 \begin{equation}\label{eq:lowerDC}
 \D(T)\leq \D(C)\leq \D(T)\cdot\mathrm{max}_{e\in T}(d_e)+2\ \!\mathrm{max}_{e\in T}(\D(C_e)).
 \end{equation}
The first inequality is trivial. The second one follows from the fact that a diametral
path $P$ in $C$ starts in a piece, ends in a piece, and in between it
passes by vertices $v_1,\ldots,v_k$ of $T$ such that for $1\leq i<k$,  $v_i$ and $v_{i+1}$ are adjacent in $T$ ---let $e=\{v_i,v_{i+1}\}$---
and $P$ travels in the piece $C_e$ to reach $v_{i+1}$ from $v_i$;
since $P$ is geodesic, its length in $C_e$ is bounded by the distance from $v_i$
to $v_{i+1}$, which is clearly bounded by $d_e$.

\subsection{Planar graphs}
By a theorem of Whitney, a  $3$-connected planar graph has a unique embedding on the oriented sphere. Hence $3$-connected planar maps are equivalent to $3$-connected planar graphs.
Once we have an estimate for the diameter of random $3$-connected maps, hence also for random $3$-connected planar graphs,
we can carry such an estimate up to random connected planar graphs, using a well known decomposition of a connected
planar graph into $3$-connected components, via a decomposition into $2$-connected components.
We now describe these decompositions and give inequalities relating the diameter of
a graph to the diameters of its components.

\subsubsection{Decomposing a connected planar graph into 2-connected components}\label{subsec:1to2}
There is a well-known decomposition of a graph into 2-connected components~\cite{Mohar,Tutte}.
Given a connected graph $C$, a \emph{block} of $C$ is a maximal 2-connected  subgraph of $C$.
The set of blocks of $C$ is denoted by $\mathfrak{B}(C)$.
A vertex $v\in C$ is said to be \emph{incident} to a block $B\in\mathfrak{B}(C)$ if $v$ belongs to $B$. The \emph{Bv-tree} is the
 bipartite graph $\tau(C)$ with vertex-set $V(C)\cup\mathfrak{B}(C)$,
and edge-set given by the incidences between the vertices and the blocks of $C$; see Figure~\ref{fig:blocks}. It is easy to see that  $\tau(C)$ is actually
a tree.

\begin{figure}\label{Bv-tree}
\begin{center}
\includegraphics[width=12.5cm]{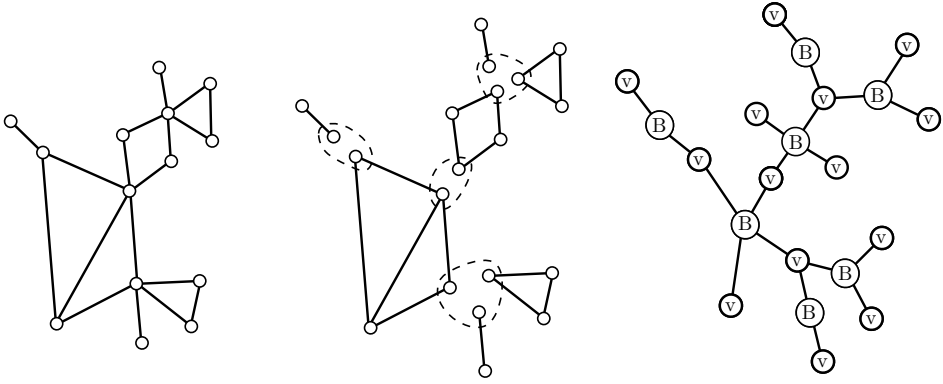}
 \end{center}
 \caption{Decomposition of a connected graph into blocks, and the associated Bv-tree.}
 \label{fig:blocks}
 \end{figure}

We will use the following inequalities to get a diameter estimate for random connected
planar graphs from a diameter estimate for random 2-connected planar graphs.
For a  connected planar graph $G$, with Bv-tree $\tau$ and blocks $B_1,\ldots,B_k$, we have:
\begin{equation}\label{eq:ineq_block}
\displaystyle
\mathrm{max}_i(\D(B_i))\leq\D(G)\leq \mathrm{max}_i(\D(B_i))\cdot\D(\tau).
\end{equation}
The first inequality is trivial. The second inequality follows from the fact that a diametral
path in $G$ induces a path $P$ in $\tau$ of length at most $\D(\tau)$, and the length
``used'' by each block $B$ along $P$ is at most $\D(B)$.

\subsubsection{Decomposing a 2-connected planar graph into 3-connected components}\label{subsec:2to3}
In this section we recall Tutte's decomposition of a 2-connected graph into
3-connected components~\cite{Tu63}.
First, we define connectivity modulo a pair of vertices. Let $G$ be a 2-connected graph
(possibly with multiple edges)
 and $\{u,v\}$ a pair of vertices of $G$. Then $G$ is said to be \emph{connected modulo $[u,v]$} if $u$ and $v$ are not adjacent
and if $G\backslash \{u,v\}$ is connected.

Define a \emph{2-separator} of a 2-connected graph $G$ as a partition of the edges of $G$, $E(G)=E_1\uplus E_2$ with $|E_1|\geq 2$ and $|E_2|\geq 2$, such that $E_1$ and $E_2$ can be separated by the removal of a pair of vertices $u,v$. 
A 2-separator $E_1,E_2$ is called a \emph{split-candidate}, 
denoted by $\{E_1,E_2,u,v\}$,  if $G[E_1]$ is connected modulo $[u,v]$ and $G[E_2]$ is 2-connected
(for $E'\subseteq E(G)$, we use the notation $G[E']$ to denote the subgraph of $G$ made of edges in $E'$ and vertices
incident to at least one edge from $E'$). 
 Figure~\ref{fig:split}(a) gives an example of a split-candidate, where $G[E_1]$ is connected modulo $[u,v]$ but not 2-connected, while $G[E_2]$ is 2-connected but not connected modulo $[u,v]$.

\begin{figure}\label{split}
\begin{center}
\includegraphics[width=9.2cm]{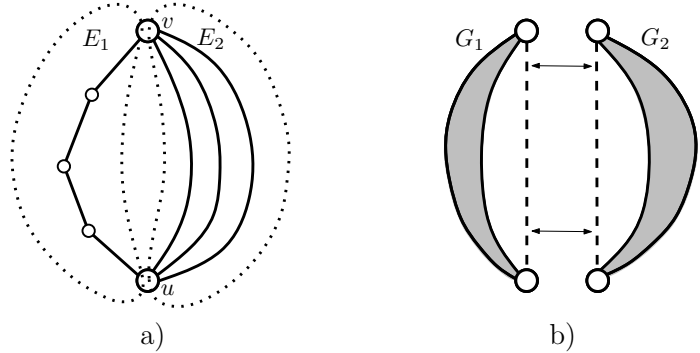}
 \end{center}
 \caption{(a) Example of a split-candidate. (b) Splitting a graph along a virtual edge.}
 \label{fig:split}
 \end{figure}

As described below, split-candidates make it possible to decompose completely a 2-connected graph into 3-connected components. We consider here only 2-connected graphs with at least three edges (graphs with less edges are degenerated for this decomposition). Given a split-candidate $S=\{E_1,E_2,u,v\}$ in a 2-connected graph $G$  (see Figure~\ref{fig:split}(b)),
the corresponding \emph{split operation} is defined as follows, see Figure~\ref{fig:split}(b):
\begin{itemize}
\item
an edge $e$, called a \emph{virtual edge}, is added between $u$ and $v$,
\item
the graph $G[E_1]$ is separated from the graph $G[E_2]$
by cutting along the edge $e$.
\end{itemize}
Such a split operation yields two graphs $G_1$ and $G_2$,  which correspond respectively to $G[E_1]$ and $G[E_2]$, together with $e$ as a real edge; see Figure~\ref{fig:split}(b). The graphs $G_1$ and $G_2$ are said to be \emph{matched} by the virtual edge $e$. It is easily checked that $G_1$ and $G_2$ are 2-connected (and have at least three edges). The splitting process can be repeated until no split-candidate remains.

\begin{figure}
\begin{center}
\includegraphics[width=13cm]{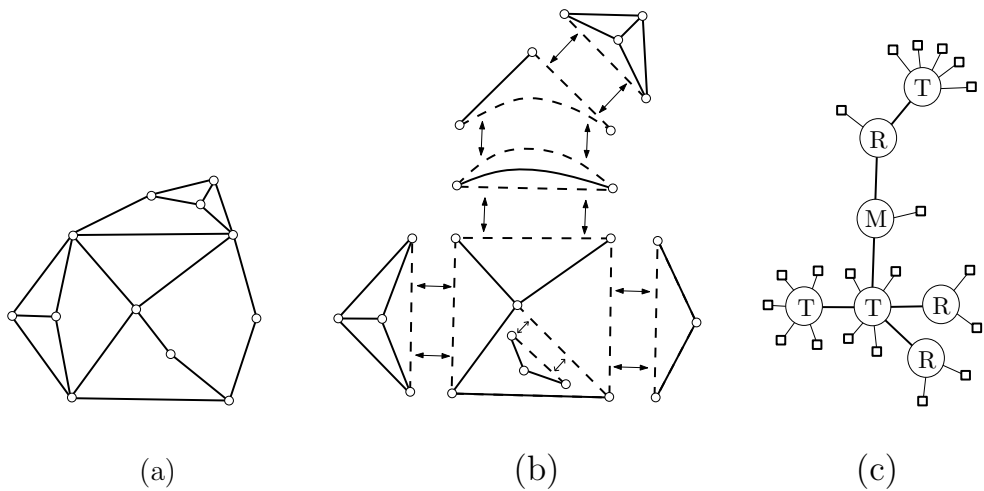}
 \end{center}
 \caption{(a) A 2-connected graph, (b) decomposed into bricks. (c) The associated RMT-tree.}
\label{fig:bricks}
 \end{figure}

As shown by Tutte in~\cite{Tutte}, the structure resulting from the split operations is independent of the order in which they are performed. It is a collection of graphs, called the \emph{bricks} of $G$, which
are articulated around virtual edges; see Figure~\ref{fig:bricks}(b).
By definition of the decomposition, each brick has no split-candidate;
Tutte shows that such graphs are either multiedge-graphs (M-bricks) or ring-graphs (R-bricks), or 3-connected graphs with at least four vertices (T-bricks).

The \emph{RMT-tree} of $G$ is the graph $\tau(G)$ whose inner nodes correspond to the bricks of $G$, and the edges between such vertices correspond to the virtual edges of $G$ (each virtual edge matches two bricks);
additionally the leaves  of $\tau(G)$ correspond to the real (not virtual) edges of $G$;
see Figure~\ref{fig:bricks}.
The graph $\tau(G)$ is indeed a tree~\cite{Tutte}.
By maximality of the decomposition, it is easily checked that $\tau(G)$ has no two adjacent R-bricks nor two adjacent M-bricks.

We will use the following inequalities to get a diameter estimate for random 2-connected
planar graphs from a diameter estimate for random 3-connected planar graphs (which
are equivalent to random $3$-connected maps, by Whitney's theorem). For
a 2-connected planar graph $G$, with RMT-tree $\tau$, bricks $B_1,\ldots,B_k$,
and $\mathcal{E}_{\mathrm{virt}}$ as the set of pairs of vertices of $G$ connected by a virtual edge, we have:
\begin{equation}\label{eq:diam3c}
\displaystyle
\mathrm{max}_i(\D(B_i))\leq\D(G)\leq \mathrm{max}_i(\D(B_i))\cdot(\D(\tau)+1)\cdot  \mathrm{max}_{(u,v)\in\mathcal{E}_{\mathrm{virt}}}\mathrm{Dist}_G(u,v).
\end{equation}
The first inequality is trivial. The second inequality follows from the following facts:
\begin{itemize}
\item
 a diametral
path $P_G$ in $G$ induces a path $P$ in $\tau$ (of length at most $\D(\tau)$),
\item
for each brick $B$  traversed by $P_G$ ($B$ corresponds to a vertex of $\tau$
that lies on $P$, there are $\D(\tau)+1$ such vertices), 
the path $P_G$ induces a path $P_B=(v_0,\ldots,v_k)$ in $B$,
where each edge $\{v_i,v_{i+1}\}$ is either a virtual edge or a real edge of $G$.
\item
 the length of $P_G$
``used'' when traversing an edge $e=\{v_i,v_{i+1}\}\in P_B$
is at most the distance between $v_i$ and $v_{i+1}$ in $G$.
\end{itemize}
Hence the length of $P_G$ ``used by $B$'' is at most $\D(B)\cdot \mathrm{max}_{(u,v)\in\mathcal{E}_{\mathrm{virt}}}\mathrm{Dist}(u,v)$,
so that the total length of $P_G$ is given by the second inequality.



\section{Diameter estimates for families of maps}
In this section we consider families of maps, starting with quadrangulations and ending with $3$-connected maps. In each case we show that for a random map $G$ of size $n$
 in such a family,
we have $\D(G)\in (n^{1/4-\e},n^{1/4+\e})$ a.a.s.\ with exponential rate, where 
the size parameter $n$ is typically the number
of edges or the number of faces.
In order to carry later on (in Section~\ref{sec:diam_graphs}) 
such estimates from 3-connected maps
to connected planar graphs, 
we need to show that such concentration properties hold more generally in a weighted setting.
More precisely, if a combinatorial class $\cG=\cup_n\cG_n$
(each $\gamma\in\cG$ has a size $|\gamma|\in\mathbb{N}$, and the set of
objects of $\cG$ of size $n$ is denoted $\cG_n$)
has an additional weight-function $w(\cdot)$, then the \emph{generating function}
of $\cG$ is
$$
G(z)=\sum_{\alpha\in\cG}w(\alpha)z^{|\alpha|},
$$
and the \emph{weighted} probability
distribution in size $n$ assigns to each map $G\in\cG_n$ the probability
$$
\mathbb{P}(G)=\frac{w(G)}{C_n},\ \  \mathrm{with}\ C_n=\sum_{G\in\cG_n}w(G).
$$
Typically, for planar maps and planar graphs, the weight will be of the form $w(G)=x^{\chi(G)}$, with $x$ a fixed positive real value and $\chi$ a parameter
such as the number of vertices; in that case  the  terminology will be
``a random map of size $n$ with weight $x$ at vertices''.

\subsection{Quadrangulations}
From Schaeffer's bijection in Section~\ref{sec:s_bij} it is easy to show large deviation results for the diameter
of a quadrangulation. The basic
idea, originating in~\cite{ChSc04}, is that the typical depth $k$ of a vertex in the tree is $n^{1/2}$,
and the typical discrepancy of the labels along a branch is $k^{1/2}=n^{1/4}$.
We use a fundamental result from~\cite{FlGa93}, namely that
under very general conditions the height of a random tree of size $n$
from a given family is in $(n^{1/2-\e},n^{1/2+\e})$ a.a.s.\ with exponential rate.

Let $y(z)=\sum_{\tau\in\cT} z^{|\tau|}w(\tau)$ be the weighted generating function of some combinatorial class $\cT$
(typically $\cT$ is a class of rooted trees),
and denote by $\rho$ the radius of convergence of $y(z)$, assumed to be strictly positive.
Assume $y\equiv y(z)$
 satisfies an equation of the form
\begin{equation}\label{eq:tree}
y=F(z,y),
\end{equation}
with $F(z,y)$ a bivariate function with nonnegative coefficients,
nonlinear in $y$, analytic around $(0,0)$, such that $F(0,0)=0$ and $F(0,y)=0$. By the
non-linearity of~\eqref{eq:tree} with respect to $y$, $y(\rho)$ is
finite; let $\tau=y(\rho)$. Equation~\eqref{eq:tree} is called
\emph{admissible} if $F(z,y)$ is analytic at $(\rho,\tau)$, in which case $F_y(\rho,\tau)=1$.  
Equation~\eqref{eq:tree} is called \emph{critical} if $F(z,y)$ is not analytic at $(\rho,\tau)$ but 
 $F(\rho,\tau)$ converges as a sum and $F(\rho,\tau)<1$, which is equivalent to the fact
that $y'(z)$ converges at $\rho$.  
 A \emph{height-parameter} for~\eqref{eq:tree} is a nonnegative integer
parameter $\xi$ for structures in $\cT$ such that $
y_h(z)=\sum_{\tau\in\cT,\xi(\tau)\leq h}w(\tau)z^{|\tau|}$
satisfies
$$
y_{h+1}(z)=F(z,y_h(z))\ \ \mathrm{for}\ h\geq 0,\ \ \ y_0=0.
$$

\begin{lemma}[Theorem 1.3. in~\cite{FlGa93}]\label{lem:height}
Let $\cT$ be a combinatorial class endowed with a weight-function $w(\cdot)$ so that the corresponding weighted generating function
$y(z)$ satisfies an equation of the form~\eqref{eq:tree}, and such
that~\eqref{eq:tree} is admissible.

Let $\xi$ be a height-parameter for~\eqref{eq:tree} and let $T_n$ be
taken at random in $\cT_n$ under the weighted distribution in size
$n$. Then $\xi(T_n)\in(n^{1/2-\e},n^{1/2+\e})$ a.a.s.\ with
exponential rate.
\end{lemma}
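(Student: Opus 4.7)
The plan is a singularity-analysis argument, with the admissibility hypothesis serving as the structural engine. Applying the analytic implicit function theorem at $(\rho,\tau)$ with $F_y(\rho,\tau)=1$ and $F_{yy}(\rho,\tau)>0$ (the nonlinearity of $F$ in $y$), one obtains the square-root expansion
\begin{equation*}
y(z) = \tau - c\sqrt{1-z/\rho} + O(1-z/\rho),\qquad c>0,
\end{equation*}
and standard transfer theorems then give $[z^n]y(z) \sim K\, n^{-3/2}\rho^{-n}$. Consequently, any bound $[z^n] g(z) \le \rho^{-n}\exp(-n^{c\e})$ that I can establish for a series $g$ will translate, upon dividing by $[z^n]y$, into the desired exponentially small probability with only a polynomial loss of order $n^{3/2}$.

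For the lower tail, take $h = \lfloor n^{1/2-\e}\rfloor$ and bound $P(\xi(T_n)\le h) = [z^n]y_h/[z^n]y$. View $y_{h+1}=F(\cdot,y_h)$ as a fixed-point iteration converging to $y(z)$ from below; for real $z<\rho$ the multiplier $F_y(z,y(z))$ is strictly less than $1$ and, by the singular expansion of $y$, satisfies $1-F_y(z,y(z)) \asymp \sqrt{1-z/\rho}$. Iterating shows that $y_h$ is analytic in a larger disk: its radius of convergence $\rho_h$ satisfies $\rho_h \ge \rho\bigl(1+\kappa/h^2\bigr)$ for a constant $\kappa>0$. The saddle-point bound at $z=\rho_h$ then gives
\begin{equation*}
[z^n]y_h \;\le\; y_h(\rho_h)\,\rho_h^{-n} \;\le\; K\,\rho^{-n}\exp(-\kappa n/h^2),
\end{equation*}
so $P(\xi(T_n)\le h) \le K\, n^{3/2}\exp(-\kappa n^{2\e})$, which is of the required form.

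The upper tail is the main obstacle. For $h = \lceil n^{1/2+\e}\rceil$, set $\bar y_h := y-y_h$ and Taylor-expand $F(z,y_h)=F(z,y-\bar y_h)$ around $y(z)$ to obtain the Riccati-type recurrence
\begin{equation*}
\bar y_{h+1}(z) \;=\; F_y(z,y(z))\,\bar y_h(z) \;-\; \tfrac12 F_{yy}(z,y(z))\,\bar y_h(z)^2 + O(\bar y_h^3).
\end{equation*}
At $z=\rho$ the multiplier equals $1$, so on the real axis the iteration is critical and a naive saddle-point bound on $[z^n]\bar y_h$ is too weak. The resolution, which is the technical heart of \cite{FlGa93}, is to analyze the recurrence on a Hankel contour $\gamma$ encircling $\rho$ at distance $1/n$: one establishes a uniform estimate of the form $|\bar y_h(z)| \le C\min\bigl(1/h,\,|1-z/\rho|^{1/2}\bigr)$ on $\gamma$, together with exponential-in-$h$ decay once $|z-\rho|$ is of constant order. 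This is obtained by comparing the discrete iteration to the continuous ODE $\bar y'(h) = -\tfrac{c}{2}\bar y^2$ (whose solution is $\bar y(h)\sim 2/(ch)$) and tracking how the crossover between the linear and quadratic terms of the Riccati recurrence scales like $|1-z/\rho|\sim 1/h^2$. Substituting into Cauchy's formula along $\gamma$ yields $[z^n]\bar y_h \le K\, n^{-3/2}\rho^{-n}\,\Phi(h/\sqrt n)$ with $\Phi$ decaying superpolynomially, hence $P(\xi(T_n)>h) \le K\, n^{3/2}\Phi(n^\e) \le \exp(-n^{c\e})$, completing the proof.
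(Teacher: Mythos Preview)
The paper does not prove this lemma at all: it is stated as a citation of Theorem~1.3 in~\cite{FlGa93}, and the only justification the paper offers is the short Remark that follows, which merely observes that $P(\xi(T_n)>h)=[z^n](y-y_h)/[z^n]y$ and $P(\xi(T_n)\le h)=[z^n]y_h/[z^n]y$, and then asserts that the coefficient bounds of~\cite{FlGa93} (proved there for $y=z\phi(y)$) carry over verbatim to any admissible equation $y=F(z,y)$. So there is nothing to compare your argument to on the paper's side beyond ``quote the reference''.

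What you have written is, in outline, precisely the argument of~\cite{FlGa93} that the paper is invoking: the square-root singular expansion of $y$, the $\Theta(1/h^2)$ enlargement of the radius of convergence of $y_h$ for the lower tail, and the Riccati linearization of $\bar y_h=y-y_h$ analyzed on a Hankel contour for the upper tail. In that sense your proposal is faithful to the intended proof, just one level deeper than the paper chose to go.

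One small point to tighten: in the upper-tail step you describe $\Phi$ as ``decaying superpolynomially'', but you then need $\Phi(n^{\e})\le\exp(-n^{c\e})$, which is a stretched-exponential bound, not merely superpolynomial. The analysis in~\cite{FlGa93} actually gives Gaussian-type decay $\Phi(x)\le C\exp(-cx^2)$ (mirroring the Kolmogorov--Smirnov/theta-function tail of the Brownian excursion maximum), which is what makes the final inequality go through; you should state that explicitly rather than the weaker ``superpolynomial''.
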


\begin{remark}
Theorem~1.3 in~\cite{FlGa93} actually gives bounds for the coefficients
$[z^n]y_h(z)$ from which Lemma~\ref{lem:height} directly follows, observing
that $P(\xi(T_n)>h)=([z^n](y(z)-y_h(z)))/[z^n]y(z)$ and $P(\xi(T_n)\leq h)=[z^n]y_h(z)/[z^n]y(z)$.  
The authors of~\cite{FlGa93} prove the result for plane trees, 
  then they claim that all the arguments in the proof hold for
any system of the form $y=z\phi(y)$. The arguments hold even more generally
for any admissible system
of the form $y=F(z,y)$.
\end{remark}

The next proposition is proved as a warm up, what we will need is a weighted version
that is more technical to prove.
\begin{proposition}\label{prop:diamQ}
The diameter of a random rooted quadrangulation with $n$ faces
is, a.a.s.\ with exponential rate, in the interval $ (n^{1/4- \e},
n^{1/4+\e})$.
\end{proposition}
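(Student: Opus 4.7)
The plan is to exploit Schaeffer's bijection (Theorem~\ref{theo:bij_quad_trees}) together with the label-span/diameter comparison~\eqref{eq:diam_quad} to reduce the statement to a large-deviation estimate for the label span $L(\tau)$ of a uniform random bicolored labelled tree $\tau$ with $n$ edges. The bijection shows that a uniform random bicolored labelled tree corresponds to a uniform rooted quadrangulation equipped with an independent uniform marked vertex and colour bit; since neither the mark nor the colour affects $\D(Q)$, the diameter has the same distribution in the two models, so an exponential-rate bound in the tree model transfers directly.

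Under the uniform distribution on bicolored labelled trees with $n$ edges, the parity constraint forces the label difference across every edge to be exactly $\pm 1$. The model therefore decomposes cleanly as follows: the underlying plane tree $T$ is uniform on plane trees with $n$ edges, and conditionally on $T$ each edge receives an independent fair $\pm 1$ increment. In particular, for any fixed vertex $v$ the label $\ell_v$ is a sum of $d(v)$ independent Rademacher variables, where $d(v)$ is the depth of $v$, and the labels along any path of $T$ form a simple symmetric random walk.

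For the upper bound, the generating function $t(z)$ of plane trees counted by vertices satisfies the admissible equation $t=z/(1-t)$, so Lemma~\ref{lem:height} applies and the height $H(T)$ is at most $n^{1/2+\e/2}$ a.a.s.\ with exponential rate. Conditionally on $T$ and on this event, Hoeffding's inequality yields, for every vertex $v$,
$$
P\bigl(|\ell_v|\geq n^{1/4+\e}/2\,\bigm|\,T\bigr)\leq 2\exp\bigl(-n^{3\e/2}/8\bigr),
$$
and a union bound over the $n+1$ vertices absorbs the polynomial factor, giving $L(\tau)\leq n^{1/4+\e}$ with the required exponential rate.

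For the lower bound, Lemma~\ref{lem:height} also gives $H(T)\geq n^{1/2-\e/2}$ a.a.s.\ with exponential rate. On this event we pick a leaf $v^{\ast}$ of maximal depth $h$: the labels along the root-to-$v^{\ast}$ path form a simple symmetric random walk of length $h\geq n^{1/2-\e/2}$, and the standard gambler's-ruin estimate $P\bigl(\max_{0\leq k\leq h}|S_k|\leq m\bigr)\leq C\exp(-ch/m^2)$ applied with $m=n^{1/4-\e}$ produces an exponent of order $n^{3\e/2}$, so $L(\tau)\geq n^{1/4-\e}$ holds with exponential rate. Combining both estimates through~\eqref{eq:diam_quad} delivers the proposition. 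The main obstacle is the lower bound: while the upper bound is a routine height-plus-Hoeffding argument, the lower bound relies crucially on the two-sided form of Lemma~\ref{lem:height} to produce a deterministic path of length $\Omega(n^{1/2-\e})$ with exponential rate, along which an anti-concentration estimate can then rule out the walk remaining confined to a box of radius $n^{1/4-\e}$.
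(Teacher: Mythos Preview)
Your overall strategy---reduce via Schaeffer's bijection and~\eqref{eq:diam_quad} to the label span, then control the span by combining the height estimate of Lemma~\ref{lem:height} with a random-walk concentration/anti-concentration argument along a branch---is exactly the paper's approach. However, there is a genuine error in your description of the tree model.

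You assert that in a bicolored labelled tree the parity constraint forces each edge increment to be $\pm 1$. This is false. In the definition of Section~\ref{sec:s_bij}, a labelled tree has increments in $\{-1,0,+1\}$, and the bicolouring simply colours vertices by the \emph{parity of their label}, with a binary choice of which parity is black. It is \emph{not} required to be a proper $2$-colouring of the tree: two adjacent vertices whose labels coincide (increment $0$) receive the same colour. Consequently your decomposition ``uniform plane tree plus i.i.d.\ Rademacher increments on the edges'' is incorrect, and the equation $t=z/(1-t)$ is not the relevant one. The correct object (and the one the paper actually uses) is the uncoloured labelled tree: a uniform plane tree with i.i.d.\ increments uniform on $\{-1,0,+1\}$, whose generating function satisfies $T=z/(1-3T)$.

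The good news is that this error is purely local. Replace the simple random walk by the lazy walk with increments $-1,0,+1$ each with probability $1/3$: Hoeffding's inequality still gives the upper bound, the gambler's-ruin anti-concentration estimate still gives the lower bound, and Lemma~\ref{lem:height} still applies since $T=z/(1-3T)$ is admissible. With that one correction your argument matches the paper's proof.
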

\begin{proof}
When the number of black vertices is not taken into account, the statement of
Theorem~\ref{theo:bij_quad_trees} simplifies:
it gives a $1$-to-$2$ correspondence between labelled trees having $n$ edges and rooted  quadrangulations
having $n$ faces and a secondary marked vertex; 
once again for a vertex $v$ of a labelled tree $\tau$, the quantity $\ell_v-\ell_{\mathrm{min}}+1$
gives the distance of $v$ from the marked vertex in the associated quadrangulation.
According to~\eqref{eq:diam_quad}, we just have to show that, for a uniformly random labelled tree
$\tau$ with $n$ vertices, $L(\tau)=\ell_{\mathrm{max}}-\ell_{\mathrm{min}}$ is in
$(n^{1/4-\e},n^{1/4+\e})$ a.a.s.\ with exponential rate. Since the
label either increases by $1$, stays equal, or decreases by $1$
along each edge (going away from the root), the series $T(z)$ of
labelled trees counted according to vertices satisfies
$$
T(z)=\frac{z}{1-3T(z)},
$$
and the usual height of the tree is a height-parameter for this equation.
The equation is clearly admissible (the singularity is at $1/12$ and $T(1/12)=1/6$),
hence by Lemma~\ref{lem:height}
 the height is in $(n^{1/2-\e},n^{1/2+\e})$ a.a.s.\ with exponential rate.
So in a random labelled tree there is a.a.s.\ with exponential rate a path $B$
of length $k=n^{1/2-\e}$ starting from the root. The labels along $B$ form a random walk
with increments $+1$, $0$, $-1$, each with probability $1/3$. Classically the maximum
of such a walk is at least $k^{1/2-\e}$ (which is at least
$n^{1/4-\e}$) a.a.s.\ with exponential rate. Hence  the label
of the vertex $v$ on $B$ at which the maximum occurs is at least the label of the root-vertex plus
$n^{1/4-\e}$, so $\ell_{max}\geq n^{1/4-\e}$ a.a.s.\ with exponential rate.
Since $\ell_{min}\leq 0$, this proves the lower
bound.

For the upper bound (already proved in~\cite{ChSc04}), since the height is at most
$n^{1/2+\e}$ a.a.s.\ with exponential rate, the same is true for the
depth $k$ of a random vertex $v$ in a random labelled tree of
size $n$. The labels along the path from the root to $v$ form a random
walk of length $k$, the maximum of which is at most $k^{1/2+\e}$ a.a.s.\ with exponential rate.
Hence
$|\ell(v)|\leq n^{(1/2+\e)^2}$ a.a.s.\ with exponential rate,
so the same holds for the property $|\ell(v)|\leq
n^{1/4+\e}$. Since multiplying by $n$ keeps the probability of
failure exponentially small, the property $\{\forall v\in Q,\
|\ell(v)|\leq n^{1/4+\e}\}$ is true a.a.s.\ with exponential
rate. This completes the proof.
\end{proof}

The next theorem generalizes Proposition~\ref{prop:diamQ} to the weighted
case, 
which is needed later on. The analytical part of the proof is  more delicate since the system specifying weighted labelled trees needs two lines, and has to
be transformed to a  one-line equation in order to apply Lemma~\ref{lem:height}.

\begin{theorem}\label{theo:diamQx}
Let $0<a<b$. The diameter of a random rooted
quadrangulation with $n$ faces and weight $x$ at black vertices
 is, a.a.s.\ with exponential rate, in the interval $ (n^{1/4-
\e}, n^{1/4+\e})$, uniformly over $x\in[a,b]$.
\end{theorem}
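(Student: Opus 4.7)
The plan is to adapt the warm-up proof of Proposition~\ref{prop:diamQ} through Schaeffer's bijection in its bicolored form. Via Theorem~\ref{theo:bij_quad_trees}, a bicolored rooted quadrangulation with $n$ faces and a secondary marked vertex, weighted by $x$ per black vertex, corresponds to a bicolored labelled tree with $n$ edges carrying weight $x$ at each vertex whose color matches that of the black vertices of $Q$. Splitting by the color of the tree-root, the generating series $T_B(z,x)$ and $T_W(z,x)$ (for black- and white-rooted bicolored labelled trees) satisfy the coupled system
\begin{align*}
T_B &= \frac{x}{1-z(T_B+2T_W)},\\
T_W &= \frac{1}{1-z(T_W+2T_B)},
\end{align*}
obtained by decomposing at the root: along each edge the label-increment is in $\{-1,0,+1\}$; the increment $0$ preserves the color of the root (weight $z\,T_{\mathrm{same}}$) while the two nonzero increments flip it (weight $2z\,T_{\mathrm{other}}$). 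By Theorem~\ref{theo:bij_quad_trees} and inequality~\eqref{eq:diam_quad}, it suffices to show that the label-span $L(\tau)$ of a random tree in this weighted distribution lies in $(n^{1/4-\epsilon},n^{1/4+\epsilon})$ a.a.s.\ with exponential rate, uniformly in $x\in[a,b]$.

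The main analytic task is to obtain a height estimate for the tree by reducing the system to a form where Lemma~\ref{lem:height} applies. Eliminating $T_W$ from the second equation (choosing the branch with $T_W(0,x)=1$) and substituting into the first produces a polynomial relation $P(z,x,T_B)=0$ that can be rearranged as a fixed-point equation $T_B=F(z,T_B)$ with $F$ analytic and with nonnegative coefficients, depending continuously on~$x$; the usual plane-tree height remains a height-parameter for the resulting iteration. To apply Lemma~\ref{lem:height} uniformly over $x\in[a,b]$, I would verify by standard singularity analysis applied to the algebraic equation that the dominant singularity $\rho(x)$ is strictly positive and continuous in $x$, that $F$ is analytic past $(\rho(x),T_B(\rho(x),x))$ with $F_y=1$ there, and that $[z^n]T_B(z,x)=\Theta(n^{-\alpha}\rho(x)^{-n})$ with constants bounded away from $0$ and $\infty$ on the compact interval $[a,b]$. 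Lemma~\ref{lem:height} then gives that the height of the weighted random tree lies in $(n^{1/2-\epsilon},n^{1/2+\epsilon})$ a.a.s.\ with exponential rate, uniformly in $x$.

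The passage from the height bound to the label-span bound is via branch random walks, but now with the iid increments of the warm-up replaced by a martingale argument. The global negation $\ell_v\mapsto -\ell_v$ preserves the parity of every label, hence the bicoloring and the weight; applied to the subtree rooted at any prefix-vertex of a fixed branch, this symmetry shows that, conditioning on the initial segment of a branch, the subsequent label-increments remain invariant under negation. Therefore the labels along a branch form a martingale with bounded increments and variance bounded below by a positive constant, uniformly in $x\in[a,b]$ (since $a>0$ and $b<\infty$ keep the weight from degenerating). For the upper bound, Azuma--Hoeffding along each branch of length at most $n^{1/2+\epsilon}$, union-bounded over the $n$ vertices, yields $L(\tau)\leq n^{1/4+\epsilon}$ a.a.s.\ with exponential rate. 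For the lower bound, a standard interval-exit estimate shows that a martingale of length $k=n^{1/2-\epsilon}$ with non-degenerate variance exits $[-k^{1/2-\epsilon},k^{1/2-\epsilon}]$ except with probability $\exp(-\Omega(k^{2\epsilon}))$, so the range of the walk along any sufficiently long branch exceeds $n^{1/4-\epsilon}$ a.a.s.\ with exponential rate.

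The chief obstacle will be the algebraic reduction of the 2D system to an admissible 1D equation together with the uniform singularity analysis over $x\in[a,b]$; once these are in hand, the transition from the unweighted random-walk argument of Proposition~\ref{prop:diamQ} to the weighted martingale argument is driven essentially by the negation symmetry of the Gibbs measure on labels.
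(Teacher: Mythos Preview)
Your overall strategy (Schaeffer bijection $\to$ height estimate $\to$ label-span estimate) matches the paper's, but two of the key steps do not go through as written.

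\textbf{The height-parameter does not survive elimination.} When you solve $T_W=1/(1-z(T_W+2T_B))$ for $T_W=G(z,T_B)$ and substitute into the first equation to get $T_B=F(z,T_B)$, the iteration $y_{h+1}=F(z,y_h)$ no longer filters by the ordinary height. It filters by the maximal number of \emph{black} vertices on a root-to-leaf path: in $F(z,y)$ the factor $G(z,y)$ already allows arbitrarily deep all-white excursions before reaching the next black subtree counted by $y$. So Lemma~\ref{lem:height} applied to your $F$ gives concentration for this ``black-height'', not for the ordinary height that your martingale step needs. The paper handles this by a different one-line reduction: it groups vertices into maximal monochromatic components and writes
\[
f=\frac{xz}{1-2g}\,T\!\Big(\frac{xz}{(1-2g)^2}\Big),\qquad
g=\frac{z}{1-2f}\,T\!\Big(\frac{z}{(1-2f)^2}\Big),
\]
then substitutes $g=G(z,f)$ to obtain $f=F(z,f)$. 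For this $F$ the natural height-parameter is the \emph{white-black height} (number of white$\to$black edges on a root-to-leaf path), and the admissibility check is a nontrivial domination argument showing that the inner argument of $T$ stays strictly below $1/4$ at $(\rho,\tau)$ --- this is the ``Claim'' in the paper's proof, not just routine singularity analysis.

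\textbf{The martingale does not have uniformly nondegenerate variance.} Your reflection symmetry correctly gives $E[d_{i+1}\mid\mathcal F_i]=0$, but $\mathrm{Var}(d_{i+1}\mid\mathcal F_i)=P(d_{i+1}\neq 0\mid\mathcal F_i)$ depends on the structure of the subtree hanging at $v_{i+1}$ and is \emph{not} bounded below. For example, if $v_i$ is black, $x<1$, and the subtree at $v_{i+1}$ is a star with $k$ leaves, then
\[
P(d_{i+1}=0\mid\mathcal F_i)=\frac{x(x+2)^k}{x(x+2)^k+2(1+2x)^k}\longrightarrow 1\quad(k\to\infty),
\]
since $x+2>1+2x$ when $x<1$. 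So the interval-exit estimate you invoke cannot be applied uniformly along an arbitrary branch. What one actually needs is that a positive proportion of edges along the branch are \emph{bicolored} (increment $\pm1$), which is precisely what the paper extracts: from the white-black height concentration it gets the bicolored height in $(n^{1/2-\e},n^{1/2+\e})$, and along bicolored edges the increments are, by your own reflection symmetry, i.i.d.\ uniform $\pm1$. This reduces the label-span analysis to the same simple-random-walk argument as in Proposition~\ref{prop:diamQ}, with no martingale subtleties.

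In short, the paper's monochromatic-component decomposition is not an arbitrary choice: it simultaneously produces a one-line equation whose height-parameter is exactly the quantity governing label changes, and turns the label process into a genuine simple random walk. Your direct elimination loses both of these features.
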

\begin{proof}
A bicolored labelled tree is called \emph{black-rooted} (resp. white-rooted) if the root-vertex is black (resp. white).
In a bicolored labelled tree the \emph{white-black}
depth of a vertex $v$  is defined as the number of
edges going from a white to a black vertex
on the path from the root-vertex to $v$, and the \emph{white-black height}
is defined as the maximum  of the white-black
depth over
all vertices.  We use here a decomposition of a bicolored
labelled tree into monocolored components (the components are obtained by removing
the bicolored edges), each such component being a plane tree. Let $f(z)$ (resp. $g(z)$)
be the weighted generating function of black-rooted (resp. white-rooted) bicolored labelled trees,
where $z$ marks the number of vertices, and where each tree $\tau$ with $i$ black vertices
has weight $w(\tau)=x^i$. Let $T(z)$
be the series counting rooted plane trees according to edges,
$T(z)=1/(1-zT(z))$. A tree counted by $f(z)$
is made of a monochromatic component (a rooted plane tree) where in each corner
one might insert a sequence of trees counted by $g(z)$; in addition each time one inserts a tree counted by $g(z)$
one has to choose if the label increases or decreases along the corresponding black-white edge.
Since a rooted plane tree with $k$ edges has $2k+1$ corners
and $k+1$ vertices, we obtain
$$
f(z)=\frac{xz}{1-2g(z)}T\left(\frac{xz}{(1-2g(z))^2}\right).
$$
Similarly
$$
g(z)=\frac{z}{1-2f(z)}T\left(\frac{z}{(1-2f(z))^2}\right).
$$
Hence the series $y=f(z)$ satisfies the equation $y=F(z,y)$, where $F(z,y)$ is
expressed by
\begin{equation}\label{eq:fg}
\renewcommand{\arraystretch}{2}
\begin{array}{ll}
F(z,y)=\ds\frac{xz}{1-2G(z,y)}T\left(\frac{xz}{(1-2G(z,y))^2}\right), \\ G(z,y)=\ds\frac{z}{1-2y}T\left(\frac{z}{(1-2y)^2}\right).
\end{array}
\end{equation}
In addition, the white-black height is a height-parameter for this
system.

\vspace{.2cm}

\noindent{\bf Claim.} \emph{The system~\eqref{eq:fg} is admissible.}

\vspace{.2cm}

\noindent\emph{Proof of the claim.} Let $\rho$ be the singularity of $f(z)$ and
 $\tau=f(\rho)$.
Let us prove first that $G(z,y)$  is analytic at $(\rho,\tau)$. Note
that $\tau<1/2$, otherwise there would be $z_0\leq\rho$ such that
$f(z_0)=1/2$, in which case $g(z)$ (and $f(z)$ as well) would diverge to $\infty$ as $z\to z_0^-$,
contradicting the fact that $f(z)$ converges for $0\leq |z|\leq \rho$.
The other possible cause of singularity is
$\rho/(1-2\tau)^2$ being a singularity of $T(z)$.
We use the symbol $\succeq$
for coefficient-domination, i.e., $A(z)\succeq B(z)$ if $[z^n]A(z)\geq [z^n]B(z)$ for all $n\geq 0$.
Clearly we have
$$
f(z)\succeq 2xzg(z),\ \ \ \ g'(z)\succeq 2zf'(z)T'\left(\frac{z}{(1-2f(z))^2}\right),
$$
hence
$$
f'(z)\succeq 4xz^2f'(z)T'\left(\frac{z}{(1-2f(z))^2}\right).
$$
As a consequence,  
$$
T'\left(\frac{z}{(1-2f(z))^2}\right)\leq\frac1{4xz^2}, \quad  \mathrm{as}\ z\to\rho^-.
$$
Since $T'(u)$ diverges at its singularity $1/4$, we have
 $\rho/(1-2\tau)^2\neq 1/4$,  otherwise there would
be the contradiction that the left-hand side
 diverges whereas the  right-hand side, which is larger, converges as $z\to\rho^-$.
Hence $T$ is analytic at $\rho/(1-2\tau)^2$, which ensures that $G(z,y)$ is analytic at $(\rho,\tau)$.
One proves similarly that
$F(z,y)$ is also analytic at $(\rho,\tau)$. \qedclaim

\vspace{.2cm}

The claim, combined with Lemma~\ref{lem:height},
ensures that the white-black height of a random black-rooted bicolored labelled tree with $n$ edges and weight $x$ at
black vertices ($x\in[a,b]$) is in
$(n^{1/2-\e},n^{1/2+\e})$ a.a.s.\ with exponential rate. In addition,
the chain of calculations in~\cite{FlGa93} to prove
Lemma~\ref{lem:height} is easily seen to be uniform in $x\in[a,b]$.
A similar analysis ensures that the \emph{white-black} height of  a random white-rooted bicolored labelled tree with $n$ edges and weight $x$ at
black vertices is in
$(n^{1/2-\e},n^{1/2+\e})$ a.a.s.\ with exponential rate.
Hence, overall, the white-black height of a random bicolored tree (either black-rooted or white-rooted)
with $n$ edges and weight $x$ at black vertices is in
$(n^{1/2-\e},n^{1/2+\e})$ a.a.s.\ with exponential rate.

Now the proof can be concluded in a similar way as in
Proposition~\ref{prop:diamQ}. Define the bicolored depth of a
vertex $v$ from the root as the number of bicolored edges on the path
from the root to $v$, and define the bicolored height as the maximum of
the bicolored depth over all vertices in the tree. Note that the
bicolored depth $d(v)$ and the white-black depth $d'(v)$ of a
vertex $v$ satisfy the inequalities $2d'(v)-1\leq d(v)\leq 2d'(v)+1$,
so the bicolored height is in $(n^{1/2-\e},n^{1/2+\e})$ a.a.s.\ with
exponential rate, uniformly over $x\in[a,b]$.  Similarly as in
Proposition~\ref{prop:diamQ}, this ensures that $\ell_{max}-\ell_{min}$ is in
$(n^{1/4-\e},n^{1/4+\e})$ a.a.s.\ with exponential rate. And the
uniformity over $x\in[a,b]$ follows from the uniformity over
$x\in[a,b]$ for the height.

Finally, using the bijection of Theorem~\ref{theo:bij_quad_trees},
the property that $\ell_{max}-\ell_{min}$ is in
$(n^{1/4-\e},n^{1/4+\e})$ a.a.s.\ with exponential rate is transferred
to the property that the diameter of a random quadrangulation with $n$
faces (with a marked vertex and a marked edge) and weight $x$ at each black vertex is in $(n^{1/4-\e},n^{1/4+\e})$ a.a.s.\ with exponential rate.
There is however a last subtlety to deal with, namely that in the bijection from bicolored
labelled trees to  quadrangulations with a marked vertex and a marked edge,
the number of black vertices in the tree
corresponds either to the number of black vertices or to the number of black vertices plus one in the associated quadrangulation.
So the weighted distribution (weight $x$ at black vertices) on bicolored labelled trees with $n$ edges is not exactly transported
to the weighted distribution (weight $x$ at black vertices) on rooted quadrangulations with $n$ faces
and a secondary marked vertex.
However, since the inaccuracy on the number of black vertices in the quadrangulation is by at most one,
 the transported weighted distribution is biased by at most $x$, so the large deviation result also holds under the (perfectly) weighted
distribution for quadrangulations~\footnote{The color of the marked vertex would be a delicate issue if we were trying to prove an explicit limit
distribution (instead of large deviation results) for the diameter.}.
\end{proof}

\subsection{Maps}

We use here the bijection of Section~\ref{sec:bij_quad_maps}
to get a diameter estimate
for random maps from a diameter estimate for random quadrangulations.
First we need the following lemma.

\begin{lemma}\label{lem:root-face-degree-planar-maps}
Let $M(z,u)$ be the generating function of rooted maps,
where $z$ marks the number of edges, $u$ marks
the degree of the outer face, and with weight $x$ at each vertex. Let $\rho$ be the radius
of convergence of $M(z,1)$ (note that $\rho$ depends on $x$).
Then there is $u_0>1$ such that $M(\rho,u_0)$ converges.
In addition for $0<a<b$, the value of $u_0$ can be chosen uniformly over $x\in[a,b]$,
and $M(\rho,u_0)$ is uniformly bounded over $x\in[a,b]$.
\end{lemma}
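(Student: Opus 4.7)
The plan is to apply the bivariate saddle-point bound~\eqref{eq:saddle} exactly as in the proof of Lemma~\ref{lem:tail}: exhibiting some $u_0>1$ with $M(\rho,u_0)$ convergent and uniformly bounded for $x\in[a,b]$ is equivalent to an exponential tail for the root-face degree of a random rooted map under the weighted distribution, and that is what I will aim for.

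The starting point is Tutte's classical quadratic functional equation for $M(z,u)$, obtained by deletion of the root edge and distinguishing the bridge case from the non-bridge case; its coefficients are polynomials in $z,u,x$, and it involves the ``catalytic'' unknown $M(z,1)$. To this equation I would apply the \emph{quadratic method} of Tutte--Brown: cancellation of a suitable double root of the discriminant yields an algebraic equation for $M(z,1)$ alone, from which one reads a dominant square-root singularity at $z=\rho=\rho(x)$, with $\rho(x)$ depending algebraically on $x$. Substituting $M(z,1)$ back gives $M(z,u)$ explicitly as an algebraic function of $(z,u,x)$; the apparent pole at $u=1$ is removable, and for $u$ in a small complex disk around $1$ the function $M(z,u)$ retains the same dominant singularity at $z=\rho(x)$ of the same square-root type.

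Standard transfer theorems for algebraic singularities then deliver
$$[z^n]M(z,u_0) \;=\; \Theta\!\bigl(n^{-5/2}\rho^{-n}\bigr),$$
so that $M(\rho,u_0)=\sum_n[z^n]M(z,u_0)\rho^n$ converges absolutely. Taking $u_0=1+\delta$ with $\delta>0$ small and exploiting the analytic dependence on $x$ together with compactness of $[a,b]$, the constants in this asymptotic estimate can be chosen uniform over $x\in[a,b]$, yielding the uniform bound on $M(\rho,u_0)$ asserted by the lemma.

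The main obstacle I expect is carrying the uniformity over $x\in[a,b]$ through the quadratic-method computations. Concretely, one has to verify that throughout the compact interval $[a,b]$ the relevant discriminant retains a simple dominant zero in $z$ at $z=\rho(x)$ (the smooth-critical regime) and that this zero depends analytically on $x$ via the implicit function theorem. Once this structural stability is in place, compactness of $[a,b]$ provides the uniform constants required for both $u_0$ and the bound on $M(\rho,u_0)$.
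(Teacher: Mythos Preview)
Your approach via Tutte's quadratic method is viable and genuinely different from the paper's. The paper instead invokes the Bouttier--Di Francesco--Guitter bijection between vertex-pointed maps and \emph{mobiles}: under this bijection each face of degree $i$ becomes a black vertex of degree $i$, so the root-face degree becomes the root-vertex degree of a tree-like object. Because mobiles satisfy a recursive decomposition at the root, one immediately gets a singular expansion
\[
M^{\circ}(z,u)=a(z,u)-b(z,u)\sqrt{1-z/\rho}
\]
valid in a full neighbourhood of $(\rho,1)$, with $a,b$ analytic in $(z,u,x)$; coefficient domination of $M$ by $M^{\circ}$ then finishes. The virtue of the bijective route is that it makes transparent \emph{why} the singularity in $z$ does not move as $u$ varies: the parameter $u$ only touches the root of the mobile and leaves the recursive tree equation (hence its singular value) untouched.

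Your route can reach the same conclusion, but the step you pass over is precisely the crux. After the quadratic method, $M(z,u)$ is a branch of a quadratic whose coefficients involve $z$, $u$, and $M(z,1)$; the singularities of $M(z,u)$ in $z$ can come either from the square-root singularity of $M(z,1)$ at $\rho$ \emph{or} from a vanishing of the discriminant $\Delta(z,u)$ of that quadratic. For $u>1$ one must check that no zero of $\Delta(\cdot,u)$ enters the disk $|z|\le\rho$, i.e.\ that the branch point does not overtake $\rho$. This is true here (and is essentially equivalent to the statement of the lemma), but it is a computation, not an automatic consequence of algebraicity; you assert it rather than verify it. You flag uniformity in $x$ as the main obstacle, yet once the discriminant analysis is done at a single $x$ the uniformity is indeed routine by compactness---the real work lies in the discriminant step you skipped. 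A small side remark: the dominant singular term of $M(z,1)$ is of type $(1-z/\rho)^{3/2}$ rather than ``square-root'', though your exponent $n^{-5/2}$ is consistent with that.
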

\begin{proof}
The result follows easily from a bijection by Bouttier, Di Francesco and Guitter~\cite{BoDFGu02} 
between vertex-pointed planar maps and a certain family of decorated
trees called \emph{mobiles}, such that each face of degree $i$ in the map corresponds
to a (black) vertex of degree $i$ in the mobile. Thanks to this bijection, the generating
function $M^{\circ}(z,u)$ of rooted maps with a secondary marked vertex (where
again $z$ marks the number of edges and $u$ marks the root-face degree)
equals the generating function of rooted mobiles where $z$ marks half
the total degree of (black) vertices and $u$ marks the root-vertex degree.
Since mobiles (as rooted trees) satisfy an explicit decomposition at the root,
the series $M^{\circ}(z,u)$ is easily shown to have, for any $x>0$, a square-root singular development of the
form
$$
M^{\circ}(z,u)=a(z,u)-b(z,u)\sqrt{1-z/\rho},
$$
valid in a neighborhood of $(\rho,1)$, with $a(z,u)$ and $b(z,u)$
 analytic in the parameters $z,u,x$.
Hence the statement holds for $M^{\circ}(z,u)$. Since $M^{\circ}(z,u)$ dominates
$M(z,u)$ coefficient-wise, the statement also holds for $M(z,u)$.
\end{proof}

\begin{theorem}\label{theo:DM}
Let $0<a<b$. The diameter of a random rooted map with $n$ edges and
weight $x$ at the vertices is in the interval $(n^{1/4- \e},n^{1/4+\e})$ a.a.s.\ with exponential rate, uniformly over $x\in[a,b]$.
\end{theorem}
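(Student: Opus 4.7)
The idea is to transfer the diameter estimate from quadrangulations (Theorem~\ref{theo:diamQx}) to maps using the bijection recalled in Section~\ref{sec:bij_quad_maps}, together with the inequalities~\eqref{eq:diamMQ}. Under that bijection, a rooted map $M$ with $n$ edges corresponds to a rooted quadrangulation $Q$ with $n$ faces in such a way that the vertices of $M$ are in bijection with the black vertices of $Q$. Hence the weighted distribution on rooted maps with weight $x$ at vertices is mapped to the weighted distribution on rooted quadrangulations with $n$ faces and weight $x$ at black vertices. Theorem~\ref{theo:diamQx} then yields $\D(Q)\in(n^{1/4-\e},n^{1/4+\e})$ a.a.s.\ with exponential rate, uniformly in $x\in[a,b]$.

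The lower bound is immediate from $\D(M)\geq \D(Q)/2$ in~\eqref{eq:diamMQ}: we get $\D(M)\geq n^{1/4-2\e}$ a.a.s.\ with exponential rate. For the upper bound $\D(M)\leq \D(Q)\cdot \Delta(M)$, we need $\Delta(M)\leq n^{\e}$ a.a.s.\ with exponential rate, where $\Delta(M)$ is the maximum face-degree. First, applying Lemma~\ref{lem:tail} to the bivariate series $M(z,u)$ (with $u$ marking the root-face degree and weight $x$ at vertices), together with Lemma~\ref{lem:root-face-degree-planar-maps} which provides a $u_0>1$ uniform in $x\in[a,b]$ with $M(\rho,u_0)$ convergent, the hypothesis $[z^n]M(z,1)=\Omega(n^{-\alpha}\rho^{-n})$ following from the standard square-root singular behavior of rooted-map generating functions (with constants uniform in $x\in[a,b]$), we conclude that the root-face degree of a random rooted map $M_n$ is at most $n^{\e}$ a.a.s.\ with exponential rate, uniformly in~$x$.

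To pass from the root-face degree to the maximum face-degree, the plan is to use a re-rooting argument. Since the weight $w(M)=x^{|V(M)|}$ depends only on the underlying unrooted structure, conditioning on the unrooted map the root becomes uniform on the $2n$ oriented edges, and the outer face $f^{\star}$ is sampled with probability proportional to its degree. Writing $Z_k$ for the number of faces of degree at least $k$, this yields
\begin{equation*}
P(d(f^{\star})\geq k) = \frac{1}{2n}\,E\!\left[\sum_{f:\, d(f)\geq k} d(f)\right] \geq \frac{k}{2n}\,E[Z_k].
\end{equation*}
Markov's inequality then gives $P(\Delta(M_n)\geq k)\leq E[Z_k]\leq (2n/k)\,P(d(f^{\star})\geq k)$, so choosing $k=n^{\e}$ and plugging the exponential bound on the root-face degree yields $P(\Delta(M_n)\geq n^{\e})\leq \exp(-n^{c'\e})$ for some $c'>0$, uniformly in $x\in[a,b]$.

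Combining everything through the upper bound in~\eqref{eq:diamMQ}, $\D(M)\leq n^{1/4+\e}\cdot n^{\e}=n^{1/4+2\e}$ a.a.s.\ with exponential rate, and re-scaling $\e$ concludes the proof. The only non-routine step is the re-rooting argument upgrading the control of the single root-face degree to a bound on the maximum face-degree; everything else is a direct assembly of the tools already established, with the uniformity in $x\in[a,b]$ inherited at each step from Theorem~\ref{theo:diamQx} and Lemma~\ref{lem:root-face-degree-planar-maps}.
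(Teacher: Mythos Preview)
Your proof is correct and follows essentially the same route as the paper's: transport the distribution via the quadrangulation bijection, use~\eqref{eq:diamMQ} for both bounds, control the root-face degree through Lemmas~\ref{lem:tail} and~\ref{lem:root-face-degree-planar-maps}, and upgrade to the maximum face-degree by a re-rooting argument yielding $P(\Delta(M)\geq k)\leq (2n/k)P(d(f^{\star})\geq k)$. The only cosmetic difference is that the paper phrases the re-rooting via bi-rooted maps (adding a uniform secondary root and swapping), which sidesteps the minor automorphism subtlety in your ``conditioning on the unrooted map'' formulation; either way the displayed inequality and the conclusion are the same.
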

\begin{proof}
The first important observation is that the bijection of Section~\ref{sec:bij_quad_maps} transports the
weighted (weight $x$ at black vertices) distribution on rooted
quadrangulations with $n$ faces to the weighted (weight $x$ at vertices)
distribution on rooted maps with $n$ edges. Let $M$ be a random
rooted map with $n$ edges and let $Q$ be the associated rooted
quadrangulation (with $n+2$ vertices). Since $\D(Q) \le 2\ \! \D(M)$, the
diameter of $M$ is at least $n^{1/4-\e}$ a.a.s.\ with exponential
rate. The upper bound is proved from the inequality
$\D(M)\leq \D(Q)\cdot\Delta(M)$, where $\Delta(M)$ is the maximal
face degree in $M$.
Together with Lemma~\ref{lem:tail}, Lemma~\ref{lem:root-face-degree-planar-maps}
ensures that the root-face degree $\delta(M)$
 in a random rooted planar map with $n$ edges and weight $x$ at vertices
has exponentially fast decaying tail.
The probability distribution of $\delta(M)$ is the same if $M$ is bi-rooted
(i.e., has two roots that are possibly equal, the root-face being
the face incident to the primary root).
When exchanging the secondary root with the primary root, the root-face can
be seen as a face $f$ taken at random under the distribution $P(f)=\mathrm{deg}(f)/(2n)$.
Thus $\delta(M)$ is distributed as the degree of the (random) face $f$.
Hence $$P(\delta(M)\geq k)\geq \frac{k}{2n}P(\Delta(M)\geq k),$$
so that $\Delta(M)\leq n^{\e}$
a.a.s.\ with exponential rate. We conclude from~\eqref{eq:diamMQ}
that the diameter of $M$ is at most $n^{1/4+\e}$ a.a.s.\ with exponential rate.
The uniformity in $x\in [a,b]$ follows from the uniformity in $x\in[a,b]$ in
Theorem~\ref{theo:diamQx} and
Lemma~\ref{lem:root-face-degree-planar-maps}.
\end{proof}

\subsection{2-connected maps}\label{sec:2connmaps}
Let $x>0$.
 Denote by
$M(z)$ (resp. $C(z)$) the weighted generating function of rooted connected (resp. 2-connected)
maps according to edges and with weight $x$ at non-root vertices.
Since a core with $n$ edges has $2n$ corners where to insert (possibly empty) rooted maps, this decomposition yields
\begin{equation}\label{eq:MCeq}
M(z)=\sum_{n\geq 0}z^n\sum_{\tau\in\cC_n} \big(1+M(z)\big)^{2n}=C(H(z)),\ \ \mathrm{where}\ H(z)=z(1+M(z))^2.
\end{equation}


An important property of the core-decomposition is that it
preserves the
distribution with weight $x$ at vertices.
Precisely, let $M$ be a random rooted map with $n$ edges and weight $x$ at vertices.
Let $C$ be the core of $M$ and let $k$ be its size. Let
$M_1,\ldots,M_{2k}$ be the pieces
of $M$, and $n_1,\ldots,n_{2k}$ their sizes. Then, conditioned to having size $k$,
$C$ is a random rooted 2-connected map with $k$ edges and weight $x$ at vertices;
and conditioned to having size
$n_i$, the $i$th piece $M_i$ is a random rooted map with $n_i$ edges and
weight $x$ at vertices.

\begin{lemma}\label{lem:core_pieces}
Let $0<a<b$, and let $x\in[a,b]$.
Let $\rho$ be the radius of convergence of $z\mapsto M(z)$ ($M(z)$ gives weight $x$ to vertices).
Following~\cite{BaFlScSo01}, define
$$
\alpha=\frac{H(\rho)}{\rho H'(\rho)}.
$$
Let $n\geq 0$, and
let $M$ be a random rooted map with $n$ edges and weight $x$ at vertices.
Let $X_n=|C|$ be the size of the core
 of $M$, and let $M_1,\ldots,M_{2|C|}$ be the pieces of $M$.
Then
$$
P\left(X_n=\lfloor \alpha n\rfloor,\ \mathrm{max}(|M_i|)\leq n^{3/4}\right)\sim P\left(X_n=\lfloor \alpha n\rfloor\right)=\Theta\left(n^{-2/3}\right)
$$
uniformly over $x\in[a,b]$.
\end{lemma}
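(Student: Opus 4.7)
The plan is to combine the critical-composition analysis of Banderier--Flajolet--Schaeffer--Soria~\cite{BaFlScSo01}, applied uniformly in $x\in[a,b]$, with a tail bound on the piece sizes coming from the heavy-tailed nature of the Boltzmann distribution on maps.

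First I would gather the singular analysis of $M(z)\equiv M(z;x)$. For every $x>0$ the series $M(z)$ has a square-root-type branch singularity of exponent $3/2$ at $\rho=\rho(x)$:
\begin{equation*}
M(z) = M(\rho) - \gamma(x)\bigl(1-z/\rho\bigr)^{3/2} + O\bigl((1-z/\rho)^{2}\bigr),
\end{equation*}
with $\gamma(x)>0$ depending continuously on $x$; hence $M_n\sim c_1(x)n^{-5/2}\rho^{-n}$, uniformly in $x\in[a,b]$ by standard transfer theorems (this uniformity can be read off the explicit closed form, or proved by arguing that the singular exponent is dictated by the polynomial equation for $M$). The function $H(z)=z(1+M(z))^2$ inherits the same radius of convergence $\rho$ and the same $3/2$-type singularity; hence $C(y)$ inherits one too at $y=\tau:=H(\rho)$, and the composition $M(z)=C(H(z))$ is \emph{critical} in the sense of~\cite{BaFlScSo01}.

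Next I would turn the core decomposition into the identity
\begin{equation*}
P(X_n=k) \;=\; \frac{C_k\,[z^n]H(z)^k}{M_n}.
\end{equation*}
The saddle-point equation for $[z^n]H(z)^k$ is $zH'(z)/H(z)=n/k$, and by definition of $\alpha$ the value $z=\rho$ solves this exactly at $k=\alpha n$. In the critical regime the saddle collides with the singularity and the standard Airy/stable-$3/2$ analysis of~\cite{BaFlScSo01} gives
\begin{equation*}
[z^n]H(z)^{\lfloor \alpha n\rfloor}\;\sim\; c_3(x)\,n^{-2/3}\,\tau^{\lfloor \alpha n\rfloor}\rho^{-n},
\end{equation*}
uniformly in $x\in[a,b]$ (the continuity of the singular data in $x$ ensures the constants and error terms are uniform). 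Combined with $C_k\sim c_2(x)k^{-5/2}\tau^{-k}$ and the asymptotics of $M_n$, this yields $P(X_n=\lfloor \alpha n\rfloor)=\Theta(n^{-2/3})$, proving the second assertion of the lemma.

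For the maximum-piece estimate I would use that, conditional on $X_n=k$, the piece-sizes $(|M_1|,\dots,|M_{2k}|)$ are distributed as i.i.d.\ copies of a nonnegative integer variable $Y$ with $P(Y=m)\propto [z^m](1+M(z))\,\rho^m$, conditioned to sum to $n-k$. The $3/2$ singularity of $M$ at $\rho$ gives the heavy tail $P(Y=m)\sim c\,m^{-5/2}$, hence $P(Y>n^{3/4})=O(n^{-9/8})$. A standard union bound and local-limit estimate for the sum (which is again of order $n^{-2/3}$ by the same stable-$3/2$ local limit theorem) give
\begin{equation*}
P\!\left(\max_{i}|M_i|>n^{3/4}\,\Big|\,X_n=\lfloor \alpha n\rfloor\right)\;\le\; 2k\cdot O(n^{-9/8})\;=\;O(n^{-1/8}),
\end{equation*}
which is $o(1)$, so $P(X_n=\lfloor \alpha n\rfloor,\,\max_i|M_i|\le n^{3/4})=(1-o(1))P(X_n=\lfloor \alpha n\rfloor)$, as required. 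The most delicate point will be Step~4: checking that the critical saddle-point analysis of~\cite{BaFlScSo01} really does produce estimates with $x$-uniform constants and remainders over the compact interval $[a,b]$; everything else is either a clean transfer from that framework or a soft tail bound.
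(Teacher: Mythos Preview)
Your derivation of $P(X_n=\lfloor\alpha n\rfloor)=\Theta(n^{-2/3})$ is the same as the paper's: both simply invoke the critical-composition/Airy analysis of~\cite{BaFlScSo01}, with the uniformity in $x$ coming from continuity of the singular data.

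Where you diverge is in the bound on $\max_i|M_i|$. The paper does \emph{not} go through the Boltzmann/conditioned-i.i.d.\ picture. Instead it works directly with coefficients: writing $P(X_n=k_0,\,|M_1|=i)$ in terms of $[z^{n-i}]H(z)^{k_0}$ and then applying a large-deviation bound of Gao--Wormald~\cite{GaWo99}, namely $[z^m]H(z)^k=O(\sigma^k\rho^{-m}\exp(-m^{\delta}))$ once $k\ge \alpha m+m^{2/3+\delta}$. Because fixing $|M_1|=i>n^{2/3+\delta}$ forces $k_0$ to lie in this large-deviation range relative to $n-i$, each such term is stretched-exponentially small, and a union bound gives $P(X_n=k_0,\max_i|M_i|>n^{3/4})=O(\exp(-n^{c}))$.

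Your probabilistic route---heavy tail $P(Y>n^{3/4})=O(n^{-9/8})$ together with a uniform local limit bound $\sup_{s'}P(\sum_{j\ge 2}Y_j=s')=O(n^{-2/3})$ to control the conditioning ratio, then a union bound---is valid and yields the weaker but sufficient $O(n^{-1/8})$. It is arguably softer, since the stable local limit theorem is already part of the~\cite{BaFlScSo01} machinery you are using anyway, so you avoid importing the separate Gao--Wormald estimate. The paper's approach, on the other hand, gives a much stronger (stretched-exponential) bound and in fact shows the result with $n^{3/4}$ replaced by any $n^{2/3+\delta}$. One point to make explicit in your write-up: the inequality you display uses the \emph{unconditional} tail $O(n^{-9/8})$, which is only justified once you have argued that the ratio $P(\sum_{j\ge2}Y_j=s-m)/P(\sum_jY_j=s)$ is $O(1)$ uniformly in $m$; that is exactly the uniform stable local limit statement, so state it rather than leaving it implicit.
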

\begin{proof}
The statement
$P(X_n=\lfloor \alpha n\rfloor)=\Theta(n^{-2/3})$ uniformly over $x\in[a,b]$ 
follows from~\cite{BaFlScSo01}. So what
we have to prove is that
$P(X_n=\lfloor \alpha n\rfloor,\ \mathrm{max}(|M_i|)> n^{3/4})=o(n^{-2/3})$
uniformly over $x\in[a,b]$.






\noindent{\bf Claim.}  \emph{Given a fixed $\delta>0$,
we have for $i> n^{2/3+\delta}$
$$
P(X_n=\lfloor \alpha n\rfloor,\ \ |M_1|=i)=O(\exp(-n^{\delta/2})).
$$
}

\noindent\emph{Proof of the claim.} Let $a_m$ be the number of rooted maps
and $c_m$ the number of rooted 2-connected maps with $m$ edges.
It follows from the (algebraic) generating function expressions~\cite{Tu63,Ar85} that these
numbers have the asymptotic estimates $a_m\sim c\rho^{-m}m^{-5/2}$,
$c_m\sim c'\sigma^{-m}m^{-5/2}$.
Equation~\eqref{eq:MCeq} implies
$$
P(X_n=k)=c_k\frac{[z^n]H(z)^k}{a_n}.
$$
It is proved in~\cite[Theorem 1 (iii)-(b)]{GaWo99},  (and the bounds are easily checked to hold uniformly over $x\in[a,b]$)
 that for $k\geq \alpha n+n^{2/3+\delta}$,
\begin{equation}\label{eq:Hzk}
[z^n]H(z)^k=
O(\sigma^k\rho^{-n}\exp(-n^{\delta})).
\end{equation}
Let $k_0=\lfloor \alpha n\rfloor$ and let $n^{2/3+\delta}<i\leq n-k_0$. We have
\begin{eqnarray*}
P(X_n=k_0,\ |M_1|=i) &= & c_{k_0}\frac{a_i[z^{n-i}]z^{k_0}(1+M(z))^{2k_0-1}}{a_n}\\
&\leq & c_{k_0}\frac{a_i[z^{n-i}]H(z)^{k_0}}{a_n}=O(n^{5/2}\sigma^{-k_0}\rho^{n-i}[z^{n-i}]H(z)^{k_0}).
\end{eqnarray*}
Since $\alpha n/(n-i)\geq \alpha(1+i/n)$, we have $\alpha n\geq \alpha(n-i)+\alpha i(n-i)/n=\alpha(n-i)+\Omega(n^{2/3+\delta})=\alpha(n-i)+\Omega((n-i)^{2/3+\delta})$. 
Hence $k_0=\alpha(n-i)+\Omega((n-i)^{2/3+\delta})$, 
so~\eqref{eq:Hzk} ensures that for any fixed $\delta'<\delta$, 
$$
[z^{n-i}]H(z)^{k_0}=O(\sigma^{k_0}\rho^{-n+i}\exp(-(n-i)^{\delta'})).
$$
Hence, for $i>n^{2/3+\delta}$, and for any fixed $\delta''<\delta'$, 
$$
P(X_n=k_0,\ |M_1|=i)=O(\exp(-(n-i)^{\delta''})),
$$
so that $P(X_n=k_0,\ |M_1|=i)=O(\exp(-n^{\delta/2}))$.
\qedclaim

\noindent The claim implies that
 $P(X_n=\lfloor \alpha n\rfloor,\ |M_1|> n^{2/3+\delta})=O(n\exp(-n^{\delta/2}))$,
 and by symmetry the same estimate holds for each piece $M_i$.
 As a consequence $P(X_n=\lfloor \alpha n\rfloor,\ \mathrm{max}(|M_i|)> n^{2/3+\delta})=
O(n^2\exp(-n^{\delta/2}))=O(\exp(-n^{\delta/3}))$. Hence
$$
P(X_n=\lfloor \alpha n\rfloor, \ \mathrm{max}(|M_i|)\leq n^{2/3+\delta})\sim P(X_n=\lfloor \alpha n\rfloor)=\Theta(n^{-2/3}).
$$
This concludes the proof, taking $\delta=3/4-2/3=1/12$.
\end{proof}

In~\cite{BaFlScSo01} the authors  show that $n^{2/3}P(X_n =\lfloor \alpha n\rfloor)$ converges; they even prove that $(X_n-\alpha n)/n^{2/3}$ converges in law.
Lemma~\ref{lem:core_pieces} just makes sure that the asymptotic estimate of
 $P(X_n =\lfloor \alpha n\rfloor)$ is the same under the additional
condition that all pieces are of size at most $n^{3/4}$ (more generally,
 under the condition that all pieces are of size at most $n^{2/3+\delta}$,
  for any $\delta>0$).
A closely related result proved in~\cite{GaWo99}
is that, for any fixed $\delta>0$,
 there is a.a.s.\ no piece of size larger than $n^{2/3+\delta}$ provided
 the core has size larger than $n^{2/3+\delta}$.

\begin{theorem}\label{theo:diam2c}
For $0<a<b$, the diameter of a random rooted 2-connected map with
$n$ edges and weight $x$ at vertices  
 is, a.a.s.\ with
exponential rate, in the interval $    (n^{1/4- \e}, n^{1/4+\e}) $,
uniformly over $x\in[a,b]$.
\end{theorem}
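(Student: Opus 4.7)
The plan is to transfer the diameter estimate of Theorem~\ref{theo:DM} to random rooted 2-connected maps via the core decomposition of Section~\ref{sec:maps}, which, as recalled just after~\eqref{eq:MCeq}, preserves the vertex-weighted distribution on both the core and the pieces. I would set $N=\lceil n/\alpha\rceil$ so that $\lfloor \alpha N\rfloor=n$, let $M$ be a random rooted map with $N$ edges and weight $x$ at vertices, and denote by $C$ and $M_1,\ldots,M_{2|C|}$ its core and pieces. By Lemma~\ref{lem:core_pieces}, the event
$$E:=\{|C|=n,\ \max_i|M_i|\le N^{3/4}\}$$
has probability $\Theta(n^{-2/3})$ uniformly over $x\in[a,b]$, and conditionally on $E$ the core $C$ is a weighted random rooted 2-connected map with $n$ edges while each piece $M_i$ is a weighted random rooted map of its size.

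The next step is to control $\D(M)$ and $\max_i\D(M_i)$ simultaneously on $E$. Theorem~\ref{theo:DM} directly gives $\D(M)\in(N^{1/4-\e},N^{1/4+\e})$ with failure probability $\exp(-N^{c\e})$. For the pieces I would split cases: pieces with $|M_i|\le n^\e$ trivially satisfy $\D(M_i)\le n^\e$; for each piece with $|M_i|>n^\e$, applying Theorem~\ref{theo:DM} conditionally on the piece size gives $\D(M_i)\le |M_i|^{1/4+\e}\le N^{3/16+3\e/4}$ with failure probability at most $\exp(-n^{c\e^2})$, and a union bound over the at most $2N$ pieces preserves the exponential rate. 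Combining with the core-decomposition inequality $\D(C)\le \D(M)\le \D(C)+2\max_i \D(M_i)$ from Section~\ref{sec:maps}, and using the strict inequality $3/16<1/4$, we get for $\e$ small enough that $\D(C)\in(N^{1/4-2\e},N^{1/4+\e})\subset (n^{1/4-3\e},n^{1/4+2\e})$ on the intersection of $E$ with a high-probability event.

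The final step is to condition on $E$: since $P(E)=\Theta(n^{-2/3})$, any unconditional failure probability $p$ is inflated to at most $p/P(E)=O(n^{2/3}p)$, but because $p$ is of the form $\exp(-n^{c\e^2})$, the polynomial factor is absorbed and we keep an exponential rate $\exp(-n^{c'\e^2})$. After relabelling $\e\mapsto\e/3$ and invoking uniformity of both Theorem~\ref{theo:DM} and Lemma~\ref{lem:core_pieces} over $x\in[a,b]$, this yields exactly the desired statement. The main obstacle I expect is the careful bookkeeping of the two conditioning steps (the piecewise union bound and the conditioning on $E$) so that the exponential rate survives both; the underlying idea is simply that polynomial corrections cannot destroy exponential tails, but writing this down cleanly while tracking the dependence of all constants on $\e$ and the uniformity in $x$ requires some care.
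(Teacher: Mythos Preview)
Your proposal is correct and follows essentially the same route as the paper: transfer the estimate from general maps to 2-connected maps via the core decomposition, use that the event $\{|C|=\lfloor\alpha n\rfloor,\ \max_i|M_i|\le N^{3/4}\}$ has polynomially small probability (Lemma~\ref{lem:core_pieces}) so that exponential tails survive conditioning, and bound $\max_i\D(M_i)$ by splitting according to piece size and applying Theorem~\ref{theo:DM} conditionally on each size. The only cosmetic differences are that the paper treats the upper bound with the simpler event $\{|C|=\lfloor\alpha n\rfloor\}$ alone (since $\D(C)\le\D(M)$ needs no piece control) and uses the threshold $n^{1/5}$ rather than your $n^{\e}$ for small pieces, arriving at $\max_i\D(M_i)\le n^{1/5}$ instead of your $N^{3/16+3\e/4}$; both are $o(n^{1/4-\e})$, so the arguments are interchangeable.

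Two small points of bookkeeping you already anticipate: the identity $\lfloor\alpha N\rfloor=n$ for $N=\lceil n/\alpha\rceil$ can fail by~$1$, which is harmless but should be said; and when you write ``conditionally on $E$ each piece $M_i$ is a weighted random rooted map of its size'', this is literally true only after further conditioning on the tuple of piece sizes (given $|C|=n$ the core and pieces are independent, so the extra constraint $\max_i|M_i|\le N^{3/4}$ does not bias $C$, but the piece sizes are still jointly constrained to sum to $N-n$). Since you then apply Theorem~\ref{theo:DM} ``conditionally on the piece size'' and take a union bound, you are in effect doing exactly this, and the argument goes through.
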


\begin{proof}
Let $M$ be a rooted map with $n$ edges and weight $x$ at vertices. Denote by $C$
the core of $M$ and by $(M_i)_{i\in[1..2|C|]}$ the pieces of $M$.
Since the event $\{|C|=\lfloor \alpha n\rfloor\}$ has polynomially small probability (order $\Theta(n^{-2/3})$,
as shown in~\cite{BaFlScSo01}), and since the event
 $\D(M)\leq n^{1/4+\e}$ holds a.a.s.\ with exponential rate,
the event $\D(M)\leq n^{1/4+\e}$, knowing that $|C|=\lfloor \alpha n\rfloor$, also
holds a.a.s.\ with exponential rate. Since $\D(M)\geq \D(C)$, we conclude that
for $C$ a random 2-connected map with $\lfloor \alpha n\rfloor$ edges and weight $x$ at vertices,
$\D(C)\leq n^{1/4+\e}$ a.a.s.\ with exponential rate. Of course the same holds for $C$
a random rooted 2-connected map with $n$ edges and weight $x$ at vertices.
This yields the a.a.s.\ upper bound on $\D(C)$.

To prove the lower bound, we use Lemma~\ref{lem:core_pieces}, which
ensures that the event
$$\{|C|=\lfloor \alpha n\rfloor,\
\mathrm{max}(|M_i|)\leq n^{3/4}\}$$
occurs with polynomially small probability,
precisely $\Theta(n^{-2/3})$.
We claim that, under the condition that $\mathrm{max}(|M_i|)\leq n^{3/4}$, then
 $\mathrm{max}(\D(M_i))\leq n^{1/5}$ a.a.s.\ (in $n$) with exponential rate.
Indeed, consider a piece $M_i$ of size $n_i$.
When $n_i\leq n^{1/5}$, $\D(M_i)\leq n^{1/5}$ trivially.
Moreover, Theorem~\ref{theo:DM} implies that, for $\delta>0$ small enough,
$P(\D(M_i)>n_i^{1/4+\delta})\leq\exp(-n_i^{c\delta})$
for some $c>0$. Hence when $n^{1/5}\leq n_i\leq n^{3/4}$,
$P(\D(M_i)>n^{3/4(1/4+\delta)})\leq\exp(-n^{c\delta/5})$, and we can take $\delta$ small enough
so that $3/4(1/4+\delta)\leq 1/5$.
Hence, when $n_i\leq n^{3/4}$, the event $\D(M_i)> n^{1/5}$ has exponentially small probability in $n$ (meaning, in $O(\exp(-n^{\alpha})$ for some
$\alpha>0$),
and the same holds for $\mathrm{max}(\D(M_i))$.
 Hence
$$
\mathbb{P}(\{|C|=\lfloor \alpha n\rfloor,\ \mathrm{max}(\D(M_i))\leq n^{1/5}\})\sim \mathbb{P}(\{|C|=\lfloor \alpha n\rfloor\})=\Theta(n^{-2/3}).
$$
In other words the event $\{|C|=\lfloor \alpha n\rfloor,\ \mathrm{max}(\D(M_i)\leq n^{1/5}\}$ occurs with polynomially small probability.
In that case, since $\D(C)\geq \D(M)-2\ \!\mathrm{max}(\D(M_i))$, and since the event $\D(M)<n^{1/4-\e}$ occurs a.a.s.\ with exponential rate,
we conclude that $\D(C)\geq n^{1/4-\e}-2n^{1/5}$ holds a.a.s.\ with exponential rate under the event
$\cE=\{|C|=\lfloor \alpha n\rfloor,\ \mathrm{max}(\D(M_i)\leq n^{1/5}\}$.
Since~$\cE$ occurs with probability $\Theta(n^{-2/3})$  and since
$n^{1/5}=o(n^{1/4-\e})$ for $\e$ small enough, we conclude (similarly
as in the proof of Theorem~\ref{theo:diam2c}) that for $C$
a random 2-connected map with $\lfloor \alpha n\rfloor$ edges and weight $x$ at vertices,
we have $\D(C)\geq n^{1/4-\e}$ a.a.s.\ with exponential rate. The same holds for $C$
a random rooted 2-connected  with $n$ edges and weight $x$ at vertices.

The uniformity
in $x\in[a,b]$ of the bounds follows from the uniformity in $x$ in
Theorem~\ref{theo:DM} and
Lemma~\ref{lem:core_pieces}.
\end{proof}

\subsection{3-connected maps}
In the following we assume 3-connected maps (and 3-connected planar graphs)
to have at least $4$ vertices, so the smallest 3-connected planar graph
is $K_4$.
We use here the plane network decomposition (Section~\ref{sec:3ccore})
to carry the diameter concentration property from 2-connected to
3-connected maps. For $x>0$,
call $N(z)$ (resp. $\wh{N}(z)$) the weighted generating functions
---weight $x$ at vertices not
incident to the root-edge--- of
plane networks (resp. plane networks with a 3-connected core),
where $z$ marks the number of edges. Note that $N(z)$ is very close
to the generating function $C(z)$ of rooted 2-connected maps
with weight $x$ at non-root vertices
and with $z$ marking the number of edges:
$$
C(z)=z+xz+xzN(z),
$$
where the first two terms in the right-hand side stand for the two 2-connected maps with a single
edge, either a loop or a link between two distinct vertices.
Call $T(z)$ the weighted generating function of rooted 3-connected maps, with weight
$x$ at vertices not incident to the root-edge, and with $z$ marking the number of non-root edges.
Clearly, the weighted generating function
 $S(z)$ of plane networks decomposable as a sequence
of plane networks satisfies $S(z)=(N(z)-S(z))xN(z)$, hence $S(z)=xN(z)^2/(1+xN(z))$.
Similarly  the weighted generating function $P(z)$ of parallel plane networks satisfies
$P(z)=(N(z)-P(z))N(z)$, so that $P(z)=N(z)^2/(1+N(z))$. Hence

\begin{equation}\label{eq:planeNetworks}
N(z)=S(z)+P(z)+\wh{N}(z),
\end{equation}
where
$$
S(z)=\frac{xN(z)^2}{1+xN(z)},\quad  P(z)=\frac{N(z)^2}{1+N(z)},\quad  \wh{N}(z)=T(N(z)).
$$
An important remark is that a random plane network $C$
with $n$ edges and weight $x$ at vertices  can be seen as a random $2$-connected
map with $n+1$ edges, weight $x$ at vertices, and where the root-edge has been deleted.
Similarly as in Section~\ref{sec:2connmaps}, for a random
plane network $N$ with $n$ edges and weight $x$ at vertices,
and conditioned to have  a 3-connected core $T$ of size $k$, $T$ is a random rooted 3-connected map
with $k$ edges and weight $x$ at vertices;
and each piece $C_e$ conditioned to have a given
 size $n_e$ is a random plane network with $n_e$ edges and weight
 $x$ at vertices.

For proving the diameter estimate for 3-connected maps, we need
the following lemma, ensuring that the root-face degree of a random 2-connected
map is small.

\begin{lemma}\label{lem:root-face-2conn}
Let $C(z,u)=\sum_{n,k}c_{n,k}z^nu^k$ 
be the generating function of  rooted 2-connected maps,
  where $z$ marks the number of edges, $u$ marks
the root-face degree, and with weight $x$ at each non-root vertex. Let~$R$ be the radius
of convergence of $C(z,1)$. Then there is $v_0>1$ such that $C(\rho,v_0)$ converges.
In addition for $0<a<b$, the value of $v_0$ can be chosen uniformly over $x\in[a,b]$,
and $C(z,v_0)$ is uniformly bounded over $x\in[a,b]$.
\end{lemma}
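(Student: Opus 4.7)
The plan is to reduce the lemma to Lemma~\ref{lem:root-face-degree-planar-maps} via the core decomposition of Section~\ref{sec:2connmaps}: the univariate identity $M(z)=C(H(z))$ with $H(z)=z(1+M(z))^2$ will be refined into a bivariate identity relating $M(z,u)$ (with $u$ marking the root-face degree of a rooted map) to $C(z,v)$ (with $v$ marking the root-face degree of a 2-connected map). Specializing at the singularity $\rho$ of $M(z,1)$ and plugging in the value $u_0>1$ supplied by the previous lemma will produce a value $v_0>1$ at which $C(R,v_0)<\infty$.

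A rooted map $M$ uniquely decomposes as a rooted 2-connected core $C$ together with a rooted map $p_c$ (possibly empty) inserted at each of the $2|E(C)|$ corners of $C$. Walking along the boundary of the root face of $M$, each corner $c$ of $C$ lying on the root face contributes one edge-side of $C$, plus $\delta_{\mathrm{out}}(p_c)$ edge-sides from a detour around the outer face of the inserted piece (with the convention $\delta_{\mathrm{out}}(\emptyset)=0$). Summing yields
$$\delta(M) \;=\; \delta(C) \;+\; \sum_{c}\delta_{\mathrm{out}}(p_c),$$
where the sum runs over the $\delta(C)$ root-face corners of $C$. Translated into generating functions, each root-face corner contributes a factor $u(1+M(z,u))$ (the $u$ for the corner itself, the $1+M(z,u)$ for the possibly empty piece) while each non-root-face corner contributes the $u$-independent factor $1+M(z)$, whence
$$M(z,u) \;=\; C\!\left(H(z),\; \frac{u(1+M(z,u))}{1+M(z)}\right).$$

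Specialize now to $z=\rho$. By the standard composition analysis of maps into 2-connected cores~\cite{BaFlScSo01}, the singularities satisfy $H(\rho)=R$, so
$$C\!\left(R,\; \frac{u(1+M(\rho,u))}{1+M(\rho)}\right) \;=\; M(\rho,u).$$
Let $u_0>1$ be provided by Lemma~\ref{lem:root-face-degree-planar-maps}, uniformly over $x\in[a,b]$, so that $M(\rho,u_0)$ is uniformly bounded. Since $M(\rho,u_0)\geq M(\rho)$, the quantity $v(u_0):=u_0(1+M(\rho,u_0))/(1+M(\rho))$ satisfies $v(u_0)\geq u_0>1$ uniformly in $x$, and $C(R,v(u_0))=M(\rho,u_0)$ is uniformly bounded. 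Because $C(R,v)$ has nonnegative coefficients in $v$ and is therefore monotone, taking $v_0=u_0$ gives $C(R,v_0)\leq C(R,v(u_0))$, which yields the required uniform bound.

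The only slightly delicate point is verifying the identity for $\delta(M)$ in the presence of loops, isthmuses and empty pieces (for instance, an isolated loop has outer-face degree $1$, while a single edge, being an isthmus, has outer-face degree $2$). Once this combinatorial identity is in hand, the analytic half reduces to a direct substitution into Lemma~\ref{lem:root-face-degree-planar-maps} together with the monotonicity of $C(R,v)$ in $v$.
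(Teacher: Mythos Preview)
Your proof is correct and follows essentially the same route as the paper: both derive the bivariate refinement $M(z,u)=C\bigl(H(z),\,u(1+M(z,u))/(1+M(z))\bigr)$ of the core decomposition, specialize at $z=\rho$ so that $H(\rho)=R$, and read off convergence of $C(R,\cdot)$ at a value strictly larger than~$1$ from Lemma~\ref{lem:root-face-degree-planar-maps}. The paper simply takes $v_0=u_0(1+M(\rho,u_0))/(1+M(\rho))$ directly, whereas your final monotonicity step to replace this by $v_0=u_0$ is an extra (harmless but unnecessary) flourish.
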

\begin{proof}
The result has been established for arbitrary rooted maps in Lemma~\ref{lem:root-face-degree-planar-maps}.
To prove the result for 2-connected maps, we rewrite
Equation~\eqref{eq:MCeq}
taking account of the root-face degree.
Recall that a rooted map $\gamma$ is obtained from a rooted 2-connected 
map $\kappa$ 
where a rooted map (allowing for the one-vertex map) is inserted in each corner;
call $k$ the root-face degree of $\kappa$ and $\gamma_1,\ldots,\gamma_k$ the maps
inserted in the root-face corners of $\kappa$. 
If $d(M)$ denotes the root-face degree of a rooted map $M$, then clearly
$$
d(\gamma)=k+d(\gamma_1)+\cdots+d(\gamma_{k}).
$$
Hence, (with $M(z):=M(z,1)$):
$$
M(z,u)=\sum_{n,k}c_{n,k}u^k(1+M(z))^{2n-k}(1+M(z,u))^k,
$$
so that 
$$
M(z,u)=C\left(z(1+M(z))^2,u{1+M(z,u)\over 1+M(z)}\right).
$$
Since the composition scheme is ``critical'' \cite{BaFlScSo01}, it is known that, if $\rho$ denotes the radius of convergence
of $M(z,1)$, then $R=\rho\cdot(1+M(\rho))^2$ is the radius of convergence of $C(z,1)$.
Hence, since $M(\rho,u_0)$ converges, $C(R,v_0)$ converges for
$v_0=u_0(1+M(\rho,u_0))/(1+M(\rho))>1$.  
The uniformity statement for $C(z,u)$ (for $x\in[a,b]$)
 follows from
the uniformity statement for $M(z,u)$, established in Lemma~\ref{lem:root-face-degree-planar-maps}, and the fact that $v_0$
is uniformly bounded away from $1$ when $x$ lies in a compact interval.
\end{proof}

\begin{theorem}\label{theo:3conn_maps}
Let $0<a<b$. The diameter of a random 3-connected map with $n$ edges
with weight $x$ at vertices is, a.a.s.\ with exponential rate, in
the interval $    (n^{1/4- \e}, n^{1/4+\e})$, uniformly over
$x\in[a,b]$.
\end{theorem}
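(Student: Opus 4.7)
My plan is to mirror the proof of Theorem~\ref{theo:diam2c}, replacing the 2-connected core decomposition of rooted maps by the plane network decomposition of Section~\ref{sec:3ccore}. Since $C(z) = z + xz + xzN(z)$, a random plane network with $n$ edges and weight $x$ at vertices is essentially a random rooted 2-connected map with $n+1$ edges and weight $x$ at vertices (with the root-edge deleted), so Theorem~\ref{theo:diam2c} directly gives $\D(C) \in (n^{1/4-\e}, n^{1/4+\e})$ a.a.s.\ with exponential rate (uniformly in $x\in[a,b]$) for a random plane network $C$ of $n$ edges.

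The first main step is to establish an analogue of Lemma~\ref{lem:core_pieces} for the compositional scheme $\wh N(z) = T(N(z))$ of~\eqref{eq:planeNetworks}: the scheme is critical in the sense of~\cite{BaFlScSo01}, which yields a constant $\beta>0$ and, for $X_n$ the number of edges of the 3-connected core of a random plane network of $n$ edges, the estimate
$$P\big(X_n = \lfloor \beta n \rfloor,\ \max_e |C_e| \leq n^{3/4}\big) \sim P(X_n = \lfloor \beta n \rfloor) = \Theta(n^{-2/3}),$$
uniformly in $x\in[a,b]$, by the same saddle-point argument on $[z^n]N(z)^k$ as in Lemma~\ref{lem:core_pieces}. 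Conditioned on $X_n = k$, the 3-connected core is a random rooted 3-connected map with $k$ edges and weight $x$ at vertices, and each piece $C_e$ conditioned on its size is a random plane network of that size with weight $x$ at vertices.

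With this in hand, the theorem follows from~\eqref{eq:lowerDC} by the same conditioning argument as in Theorem~\ref{theo:diam2c}. The upper bound is immediate from $\D(T) \leq \D(C)$ and the upper bound on $\D(C)$, after conditioning on the polynomial-probability event $\{X_n = \lfloor \beta n \rfloor\}$. For the lower bound, applying
$$\D(C) \leq \D(T) \cdot \max_{e} d_e + 2 \max_{e} \D(C_e),$$
I would control each correction separately: each $d_e$ is (up to a constant) the root-face degree of the plane network $C_e$ viewed as a 2-connected map, so Lemma~\ref{lem:root-face-2conn} together with Lemma~\ref{lem:tail} gives uniform exponentially decaying tail, and a union bound over the $O(n)$ pieces yields $\max_e d_e \leq n^\e$ a.a.s.\ with exponential rate; and since each $|C_e| \leq n^{3/4}$, Theorem~\ref{theo:diam2c} applied with a small $\delta$ such that $\frac{3}{4}(\frac14 + \delta) \leq \frac15$ gives $\max_e \D(C_e) \leq n^{1/5}$ a.a.s.\ with exponential rate. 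Combining with $\D(C) \geq n^{1/4-\e}$ then forces $\D(T) \geq n^{1/4-3\e}$ on an event of probability $\Theta(n^{-2/3})$, and the transfer from conditional to unconditional (after re-naming $\e$) gives the lower bound for a random 3-connected map of size $n$. The main obstacle I anticipate is verifying uniformly in $x\in[a,b]$ the criticality of the composition $\wh N = T\circ N$ underlying the $\Theta(n^{-2/3})$ estimate for $P(X_n = \lfloor \beta n \rfloor)$; this is the 3-connected analogue of the phenomenon established in~\cite{BaFlScSo01} for the 2-connected core of a random map, and uniformity in $x$ follows from the joint analyticity of the series $N(z)$, $T(z)$ in $(z,x)$.
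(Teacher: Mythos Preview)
Your proposal is correct and follows essentially the same approach as the paper's own proof: both use the plane network decomposition of Section~\ref{sec:3ccore}, transfer the diameter estimate from random plane networks (equivalently random 2-connected maps with the root-edge removed) to the 3-connected core via inequality~\eqref{eq:lowerDC}, invoke~\cite{BaFlScSo01} for the $\Theta(n^{-2/3})$ probability of a core of size $\lfloor\alpha n\rfloor$ (your $\beta$), establish the analogue of Lemma~\ref{lem:core_pieces} to control $\max_e|C_e|$, and use Lemma~\ref{lem:root-face-2conn} with Lemma~\ref{lem:tail} to bound $\max_e d_e\le n^{\e}$. Your write-up is in fact slightly more explicit than the paper's about the intermediate bound $\max_e\D(C_e)\le n^{1/5}$, which the paper absorbs into the phrase ``Equation~\eqref{eq:lowerDC} easily implies\ldots''; otherwise the arguments coincide.
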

\begin{proof}
Let $\rho$ be the radius of convergence (depending on the weight $x$ at vertices)
of $N(z)$, which is the same as the radius of
convergence of $C(z)=z+xz+xzN(z)$. And let
$$
\alpha=\frac{N(\rho)}{\rho N'(\rho)}.
$$
Again the results in~\cite{BaFlScSo01} ensure that, for a random plane network $C$
with $n$ edges and weight $x$ at vertices,
the probability of having a 3-connected core $T$ of size
$\lfloor \alpha n\rfloor$
is $\Theta(n^{-2/3})$, hence polynomially small, whereas the probability that $\D(C)>n^{1/4+\e}$
 is exponentially small. Since $\D(C)\geq \D(T)$, and since $T$ is a random rooted $3$-connected map
with $k=\lfloor \alpha n\rfloor$ edges and weight $x$ at vertices, we conclude that
$\D(T)\leq n^{1/4+\e}$
a.a.s.\ with exponential rate.
 For the lower bound we look at the second inequality in~\eqref{eq:lowerDC}:
$$
 \D(C)\leq \D(T)\cdot\mathrm{max}_{e\in T}(d_e)+2\ \!\mathrm{max}_{e\in T}(\D(C_e)),
$$
where for each edge $e$ of $T$, $C_e$ denotes the piece substituted at $e$ and $d_e$
denotes the root-face degree of $C_e$.

Lemma~\ref{lem:tail} and Lemma~\ref{lem:root-face-2conn}
ensure that  the distribution of the root-face degree of a random
rooted 2-connected map has exponentially fast decaying tail.
Hence $\mathrm{max}_{e\in T}(d_e)\leq n^{\e}$ a.a.s.\ with exponential rate.
 Moreover, in the same way as in Lemma~\ref{lem:core_pieces},
one can show that
the probability of the event
$\cE=\{|T|=\lfloor \alpha n\rfloor,\ \mathrm{max}(|C_e|)\leq n^{3/4}\}$
is $\Theta(n^{-2/3})$. Since  $\mathrm{max}_{e\in T}(d_e)\leq n^{\e}$
and $\D(C)\geq n^{1/4-\e}$ a.a.s.\ with exponential rate, Equation~\eqref{eq:lowerDC} easily implies that,
conditioned on $\cE$, $\D(T)\geq n^{1/4-\e}$ a.a.s.\ with exponential rate.
Since $\cE$ occurs with polynomially small probability, we conclude that
$\D(T)\geq n^{1/4-\e}$ a.a.s.\ with exponential rate.
Finally the uniformity of the estimate over $x\in[a,b]$ follows from the
uniformity over $x\in[a,b]$ in Theorem~\ref{theo:diam2c} and in Lemma~\ref{lem:root-face-2conn}.
\end{proof}

\section{Diameter estimates for families of graphs}\label{sec:diam_graphs}
We now establish estimates (all of the form $\D(G)\in(n^{1/4-\e},n^{1/4+\e})$ a.a.s. 
with exponential rate)  for the diameter of random graphs in families
of \emph{unembedded} planar graphs. We establish first an estimate 
for 3-connected planar graphs (equivalent to 3-connected maps by Whitney's theorem),
then derive from it an estimate for 2-connected planar graphs (which have a decomposition, at edges, 
into 3-connected components), and finally derive from it an estimate for connected
planar graphs (which have a decomposition, at vertices, 
into 2-connected components). 
Since the graphs are unembedded,
it is necessary to \emph{label} them to avoid symmetry issues (in contract, for maps,  
rooting, i.e., marking and orienting an edge, is enough). One can choose
to label either the vertices or the edges. For our purpose it is more convenient
to label 3-connected and 2-connected planar graphs at \emph{edges}
(because the decomposition into 3-connected components occurs at edges);
then relabel 2-connected planar graphs at vertices and label also connected
planar graphs at vertices (because the decomposition into 2-connected components
occurs at vertices). 

\subsection{3-connected planar graphs}

For the time being
we need 3-connected graphs labelled at the edges (this is enough to avoid symmetries).
The number of edges is denoted $m$, and $n$ is reserved for the number of vertices.
By Whitney's theorem, 3-connected planar graphs with at least $4$ vertices have two 
  embeddings on the oriented sphere (which are mirror of each other).
Hence Theorem~\ref{theo:3conn_maps} gives:

\begin{theorem}\label{theo:D3cgraph}
Let $0<a<b$. The diameter of a random 3-connected planar graph with
$m$ edges and weight $x$ at vertices is, a.a.s.\ with
exponential rate, in the interval $(m^{1/4- \e}, m^{1/4+\e})$, uniformly over $x\in[a,b]$.
\end{theorem}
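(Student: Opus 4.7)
The plan is to reduce Theorem~\ref{theo:D3cgraph} directly to Theorem~\ref{theo:3conn_maps} via Whitney's theorem. Since a $3$-connected planar graph with at least four vertices admits exactly two embeddings on the oriented sphere (related by orientation-reversal), $3$-connected planar maps and $3$-connected planar graphs encode essentially the same information once one accounts for the difference between rooting by a directed edge and labelling by edges. The diameter, being an invariant of the underlying abstract graph, must then have the same distribution in both models, provided the induced laws on isomorphism classes agree.

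The key step will be to verify precisely this: that the weighted distribution on edge-labelled $3$-connected planar graphs with $m$ edges and weight $x$ at vertices, and the weighted distribution on rooted $3$-connected planar maps with $m$ edges and weight $x$ at vertices, induce the same probability law on isomorphism classes of unembedded $3$-connected planar graphs. To this end I would fix an abstract $3$-connected planar graph $G$ with $m$ edges and $|V(G)|$ vertices, and observe that Whitney's theorem, combined with the fact that rooting by a directed edge kills any residual symmetry of a $3$-connected map, yields exactly $4m/|\mathrm{Aut}(G)|$ rooted $3$-connected maps with underlying graph $G$: there are two embeddings times $2m$ oriented edges, and $\mathrm{Aut}(G)$ acts freely on this set of $4m$ pairs. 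On the other hand, $G$ admits $m!/|\mathrm{Aut}(G)|$ distinct edge-labellings. Under the weight $x^{|V(G)|}$ at vertices, the probability attached to the isomorphism class of $G$ is therefore proportional to $x^{|V(G)|}/|\mathrm{Aut}(G)|$ in both models, since the multiplicative factors $4m$ and $m!$ depend only on $m$ and cancel in the normalization.

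Once this matching is in hand, Theorem~\ref{theo:3conn_maps} applied with $n$ replaced by $m$ will immediately give $\D(G)\in(m^{1/4-\e},m^{1/4+\e})$ a.a.s.\ with exponential rate, and the uniformity in $x\in[a,b]$ will be inherited directly from the uniformity in that theorem. I do not expect any serious obstacle: essentially all of the real content is the Whitney correspondence together with the elementary orbit count sketched above, and the transfer of the diameter estimate is automatic since the diameter factors through the abstract graph.
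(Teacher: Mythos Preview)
Your proposal is correct and follows exactly the paper's approach: the paper's entire proof of Theorem~\ref{theo:D3cgraph} is the one-line observation that Whitney's theorem makes $3$-connected planar graphs equivalent to $3$-connected maps, so the result is inherited from Theorem~\ref{theo:3conn_maps}. You simply make the correspondence between the edge-labelled and rooted-map distributions explicit via the orbit count, which the paper leaves implicit.
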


\subsection{Planar networks}

\cacher{
Before we handle 2-connected graphs we must analyze planar networks, again labelled at edges. The GF of networks $D(x,y)$, where $x$ marks vertices and $y$ marks edges,  satisfies
\begin{equation}\label{eq:D}
    1+D = (1+y) \exp\left(xD^2/(1+xD) + 2x^{-2} T(x,D)        \right),
\end{equation}
where $T(x,z)$ is the GF of edge-rooted 3-connected graphs.
Let $R(x)$ be the singularity of $D(x,y)$, and $r(x)$ the singularity of $T(x,z)$.

We fix a value of $x$ and denote $D(x,y)$ just by $D(y)$. Then we have an explicit expression for the inverse $\phi(u)$ of $D(y)$.
As proved in~\cite{BeGa}, the singularity of $D$ is due to $T$ and not to a branch
point in Equation~\eqref{eq:D}, which means that
$$
    \phi'(r(x)) >0.
$$

Let $T_k$ be the truncation of $T$ of order $k$ (graphs with at most $k$ edges), and let $D_k$ be the GF of networks whose 3-connected components have at most $k$ edges.
Then $D_k$ satisfies an analogous equation as~\eqref{eq:D} (with $T$ replaced
by $T_k$), but since $T_k$ has no singularities, it is subcritical, meaning
that the singularity is due to cancelation of
$\phi_k'(u)$, where $\phi_k(u)$ is the inverse function of $D_k(y)$.

Let us define
$$
    u_k = r(x) \left(1 + {1 \over k \log k}\right).
    $$
One easily checks that $\phi'_k(u_k) \to \phi'(r(x))>0$, so that $y_k = \phi_k(u_k)$ is below the radius of convergence of $D_k(y)$ for $k$ large enough. Then we have
$$
y_k-R(x)=\left(\phi_k(u_k)-\phi_k(r(x))\right)+\left(\phi_k(r(x))-\phi(r(x))\right)\sim \phi'(r(x))\cdot(u_k-r(x))+O(k^{-3/2})\sim \frac{\beta}{k \log k},
$$
where $\beta=\phi'(r(x))r(x)>0$.

\begin{lemma}
In a random (weighted) network with $m$ edges
there exists a 3-connected component of size at least $m^{1-\e}$ a.a.s.
with exponential rate.
\end{lemma}

\begin{proof}
The probability that all 3-connected components have size at most $k$ is equal
to the ratio of $[y^m]D_k(y)$ by $[y^m]D(y)$.
It is proved in~\cite{BeGa} that $[y^m]D(y)=\Theta(m^{-5/2}R(x)^{-m})$.
Moreover, the bound~\eqref{eq:saddle} ensures that
$$
    [y^m]D_k(y) \le D_k(y_k) y_k^{-m}=u_ky_k^{-m}\sim r(x)y_k^{-m}.
$$
Take $k=m^{1-\e}$. Using the previous estimate of $y_k$ we obtain
$y_k^{-m}=O\left(R(x)^{-m}\exp(-m^{\e/2})\right)$, which concludes the proof.
\end{proof}
}

Before handling 2-connected planar graphs we treat the closely related family
of \emph{planar networks}. A \emph{planar network} is a connected simple planar graph
with two marked vertices called the poles, such that adding an edge
between the poles, called the root-edge,
makes the graph 2-connected. First it is convenient
to consider planar networks as labelled at the edges.

\begin{theorem}\label{theo:Nm}
Let $0<a<b$.
The diameter of a random planar network with $m$ labelled edges and weight $x$ at vertices is, a.a.s.\ with exponential rate, in the interval
$$
    (m^{1/4- \e}, m^{1/4+\e}),
$$
uniformly over $x\in[a,b]$.
\end{theorem}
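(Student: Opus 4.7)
The strategy is to parallel the argument of Theorem~\ref{theo:diam2c}, here transferring the diameter estimate from 3-connected planar graphs (Theorem~\ref{theo:D3cgraph}) to planar networks via the edge-decomposition~\eqref{eq:diam3c}. The central auxiliary result I will need is: in a random planar network with $m$ edges and weight $x$ at vertices, there exists a 3-connected (T-brick) component of size at least $m^{1-\e}$ a.a.s.\ with exponential rate, uniformly in $x\in[a,b]$. To establish this I would use the generating function equation
\begin{equation*}
1+D = (1+y)\exp(xD^2/(1+xD)+2x^{-2}T(x,D)),
\end{equation*}
the asymptotics $[y^m]D(y)=\Theta(m^{-5/2}R(x)^{-m})$ of~\cite{BeGa}, and the key fact (also from~\cite{BeGa}) that the singularity $R(x)$ of $D$ is due to $T$ rather than to a branch point of the inverse function of $D(y)$. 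Applied to the restricted series $D_k$ (networks whose T-bricks have at most $k$ edges), this gives a radius of convergence $y_k$ with $y_k-R(x)\sim\beta/(k\log k)$, and the saddle bound~\eqref{eq:saddle} then yields $[y^m]D_k/[y^m]D=O(\exp(-m^{\e/2}))$ for $k=m^{1-\e}$, uniformly in $x\in[a,b]$.

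Given this auxiliary statement, the lower bound on $\D(N)$ is immediate: conditioned on having a T-brick $B$ of size $\geq m^{1-\e}$, the brick $B$ is distributed as a random edge-rooted 3-connected planar graph with weight $x$ at non-root vertices (up to a size-bias bounded uniformly in $x\in[a,b]$). By Theorem~\ref{theo:D3cgraph} one has $\D(B)\geq m^{1/4-\e}$ a.a.s.\ with exponential rate, and the first inequality of~\eqref{eq:diam3c} propagates this to $\D(N)\geq\D(B)$.

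For the upper bound, the second inequality of~\eqref{eq:diam3c} reads
\begin{equation*}
\D(N)\leq \max_i\D(B_i)\cdot(\D(\tau)+1)\cdot\max_{(u,v)\in\mathcal{E}_{\mathrm{virt}}}d_N(u,v),
\end{equation*}
and each factor must be controlled. The bound $\max_i\D(B_i)\leq m^{1/4+\e/3}$ follows from Theorem~\ref{theo:D3cgraph} applied to each T-brick (with trivial estimates for R-bricks of size $\leq m^{1/5}$) together with a union bound over the $O(m)$ bricks. The bounds $\D(\tau)+1\leq m^{\e/3}$ and $\max_{(u,v)}d_N(u,v)\leq m^{\e/3}$ would follow from a strengthened form of the auxiliary result, asserting that all pieces substituted at edges of the dominant T-brick have size $\leq m^{\e/8}$ a.a.s.\ with exponential rate: the tree $\tau$ then has depth bounded by the number of nesting levels of small pieces, and each virtual edge $(u,v)$ is bridged in $N$ by a path inside a piece of diameter at most $(m^{\e/8})^{1/4+\e}<m^{\e/3}$ by a recursive application of the theorem. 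The strengthened form is proved by applying the same saddle argument to the bivariate series $D(y,u)$ in which $u$ marks the maximum piece size, invoking Lemma~\ref{lem:tail}.

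The main obstacle I expect is the simultaneous control of the RMT-tree depth and of the virtual-edge distances in $N$: the heuristic picture (a single dominant 3-connected core decorated with asymptotically small pieces) is clear, but making it rigorous through the composition scheme requires care because of the exponential structure in the equation for $D$, which does not fit directly into the ``critical composition'' framework of~\cite{BaFlScSo01} used in the map case. The rest of the argument is then an adaptation of the proof of Theorem~\ref{theo:diam2c}, with uniformity in $x\in[a,b]$ inherited from the uniformity already established in Theorem~\ref{theo:D3cgraph}.
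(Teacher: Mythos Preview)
Your overall architecture---lower bound via a large $T$-brick (Lemma~\ref{lem:big3conn}) and upper bound via the inequality~\eqref{eq:diam3c} controlling $\max_i\D(B_i)$, $\D(\tau)$, and $\max_{(u,v)\in\mathcal{E}_{\mathrm{virt}}}\mathrm{Dist}_G(u,v)$ separately---is exactly what the paper does in Section~\ref{sec:proof_theo_delayed}. The lower bound part is essentially correct and matches the paper.

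The genuine gap is in your plan for the two factors $\D(\tau)$ and $\max_{(u,v)}d_N(u,v)$. You propose to derive both from a ``strengthened form of the auxiliary result, asserting that all pieces substituted at edges of the dominant $T$-brick have size $\leq m^{\e/8}$ a.a.s.\ with exponential rate''. This statement is \emph{false}: conditioned on the core having linear size, the pieces are essentially i.i.d.\ with polynomial tail $P(|M_i|\geq k)\asymp k^{-3/2}$ (from $[z^k]D(z)=\Theta(k^{-5/2}\rho^{-k})$), and the maximum over $\Theta(m)$ pieces is of order $m^{2/3}$, not $m^{\e/8}$. This is exactly the Airy-type phenomenon underlying Lemma~\ref{lem:core_pieces}, where the best one gets is $\max|M_i|\leq n^{3/4}$ with polynomially (not exponentially) small failure probability. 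Consequently your appeal to Lemma~\ref{lem:tail} with ``$u$ marking the maximum piece size'' cannot work: the relevant bivariate series diverges at $(\rho,u_0)$ for any $u_0>1$. A related issue: $R$-bricks are not automatically of size $\leq m^{1/5}$; this requires its own generating-function argument (Lemma~\ref{lem:diam_bricks}).

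The paper circumvents this by \emph{not} controlling piece sizes at all. For $\D(\tau)$ it observes that the system~\eqref{eq:syst_networks2} is \emph{critical} (the Jacobian has spectral radius $<1$ at the singularity) and proves a vectorial version of Lemma~\ref{lem:height2} (Lemma~\ref{lem:height3}), giving $\D(\tau)\leq m^{\e}$ directly. For the virtual-edge distances it introduces a recursive upper-bound parameter $\chi$ on pole-to-pole distance (defined through the root-face degrees of the $3$-connected cores, cf.\ Lemma~\ref{lem:root-face-degree-3-conn}) and shows via the bivariate system~\eqref{eq:syst_biv} that $\chi$ has an exponentially decaying tail (Lemmas~\ref{lem:dist_pole_network}--\ref{lem:maxde}). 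Both arguments exploit the subcriticality of the composition scheme rather than any smallness of individual pieces, and this is precisely the ``care'' you anticipated would be needed but did not supply.
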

The proof, which is quite technical, is delayed to Section~\ref{sec:proof_theo_delayed};
it relies on the decomposition into \emph{$3$-connected components} described in Section~\ref{subsec:2to3}
and the inequalities~\eqref{eq:diam3c}.
The proof of Theorem~\ref{theo:cx} in the next section, which relies
on the decomposition into \emph{2-connected components} gives a good idea
(with less technical details), of the different steps
needed to prove Theorem~\ref{theo:Nm}.

\begin{lemma}\label{lem:Nnm}
Let $1<c<d<3$. Let $N_{n,m}$ be a planar network with $n$ vertices and $m$
labelled edges, taken uniformly at random. Then
$\D(N_{n,m})\in(n^{1/4-\e},n^{1/4+\e})$ a.a.s.\ with exponential rate,
uniformly over $m/n\in[c,d]$.
\end{lemma}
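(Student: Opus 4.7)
The plan is to deduce Lemma~\ref{lem:Nnm} from Theorem~\ref{theo:Nm} by tuning the vertex-weight $x$ so that a random $x$-weighted planar network with $m$ edges has its vertex-count concentrated around the prescribed value $n$, and then conditioning on exactly $n$ vertices. For each $x>0$, let $\mathbb{P}_x^{(m)}$ denote the weighted distribution on planar networks with $m$ labelled edges (as in Theorem~\ref{theo:Nm}), and let $V_m$ be the number of vertices. The first step is to establish the existence of a smooth strictly monotone bijection $\mu:(0,\infty)\to(1,3)$ such that under $\mathbb{P}_x^{(m)}$,
\[
\mathbb{E}[V_m]=m/\mu(x)+O(1),\qquad \mathbb{P}_x^{(m)}(V_m=n)=\Theta(m^{-1/2})
\]
whenever $|n-m/\mu(x)|=O(m^{1/2})$, with constants uniform in $x$ over any compact sub-interval of $(0,\infty)$. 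Given $[c,d]\subset(1,3)$, this produces $0<a<b$ with $\mu([a,b])\supseteq[c,d]$.

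Given $n$ and $m$ with $\mu:=m/n\in[c,d]$, pick $x=x(\mu)\in[a,b]$ with $\mu(x)=\mu$. The decisive observation is that the conditional law of $\mathbb{P}_x^{(m)}$ given $\{V_m=n\}$ is exactly the uniform distribution on planar networks with $n$ vertices and $m$ labelled edges, because on this event the weight $x^n$ is constant. Hence
\[
\mathbb{P}\bigl(\D(N_{n,m})\notin (n^{1/4-\e},n^{1/4+\e})\bigr)=\frac{\mathbb{P}_x^{(m)}\bigl(\D\notin (n^{1/4-\e},n^{1/4+\e}),\,V_m=n\bigr)}{\mathbb{P}_x^{(m)}(V_m=n)}.
\]
Since $m\asymp n$ in the range $[c,d]$, the intervals $(m^{1/4-\e'},m^{1/4+\e'})$ and $(n^{1/4-\e},n^{1/4+\e})$ are comparable for $\e'$ slightly less than $\e$, so Theorem~\ref{theo:Nm} bounds the numerator by $\exp(-n^{c\e})$, uniformly in $x\in[a,b]$, while by the previous step the denominator is $\Omega(m^{-1/2})=\Omega(n^{-1/2})$. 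The ratio remains exponentially small, giving the claimed concentration. Uniformity over $\mu\in[c,d]$ follows from the uniformity over $x\in[a,b]$ in Theorem~\ref{theo:Nm} combined with the continuity of $\mu\mapsto x(\mu)$.

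The main obstacle is the local central limit estimate $\mathbb{P}_x^{(m)}(V_m=n)=\Theta(m^{-1/2})$, uniformly in $x$ over a compact interval. This requires perturbed singularity analysis of the bivariate generating function $N(z,w)$ (with $z$ marking edges and $w$ marking vertices): one needs a square-root singularity in $z$ whose singular coefficients depend analytically on $w$ in a neighborhood of each $w=x\in[a,b]$, together with an aperiodicity (or irrationality) argument to pass from a Gaussian central limit theorem to a local one. These ingredients can be extracted from the $3$-connected components decomposition underlying Theorem~\ref{theo:Nm} and match the analytic machinery developed for related planar families (for instance, this is precisely the kind of transfer used to pass from Theorem~\ref{theo:main} to Theorem~\ref{theo:main2} at the level of connected graphs).
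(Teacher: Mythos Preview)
Your approach is essentially identical to the paper's: choose $x=x_\mu$ so that the vertex count under $\mathbb{P}_x^{(m)}$ hits $n$ with probability $\Theta(m^{-1/2})$, observe that conditioning on $\{V_m=n\}$ gives the uniform distribution, and then divide the exponentially small numerator from Theorem~\ref{theo:Nm} by this polynomially small denominator. The paper simply cites~\cite{BeGa} for the local central limit estimate and the continuity of $\mu\mapsto x_\mu$ that you spell out in more detail in your final paragraph.
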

\begin{proof}
Let $\mu=m/n$. For $x>0$, let $X_m$ be the number of vertices of a
random  planar network $N$ with $m$ edges and weight $x$ at vertices.
The results in~\cite{BeGa} ensure that
there exists $x_{\mu}>0$ such that, for $x=x_{\mu}$,
$P(X_m=n)=\Theta(m^{-1/2})$, uniformly over $\mu\in[c,d]$.
In addition $x_{\mu}$ is a continuous function of $\mu$, so
it maps $[c,d]$ into a compact interval.
Therefore, Theorem~\ref{theo:Nm} implies that, for $x=x_\mu$,
 $\D(N)\in [m^{1/4-\e},m^{1/4+\e}]$ a.a.s.\ with exponential rate
 uniformly over $\mu\in[c,d]$. Since $P(X_m=n)=\Theta(m^{-1/2})$, uniformly over $\mu\in[c,d]$, we conclude that the event
 $\D(N)\in [m^{1/4-\e},m^{1/4+\e}]$, conditioned on $X_m=n$, holds
 a.a.s.\ with exponential rate uniformly over $\mu\in[c,d]$, which concludes the proof
 (note that the distribution of $N$ conditioned on $X_m=n$ is the
 uniform distribution on planar networks with $m$ edges and $n$ vertices).
\end{proof}

The proof of Lemma~\ref{lem:Nnm} is the only place where uniformity of the
estimates according to $x$ (for $x$ in an arbitrary compact interval)
is needed. In the following, the weight $x$ will be at edges,
and we will not need anymore to check that the statements hold uniformly in $x$ (even though they clearly do).
Another important remark is that planar networks with $n$ vertices and $m$ edges can be labelled
either at vertices or at edges, and the uniform distribution in one case
corresponds to the uniform distribution in the second case. Hence the result
of Lemma~\ref{lem:Nnm} holds for random planar networks with $n$ labelled
 vertices and $m$ unlabelled edges.

\begin{lemma}\label{lem:Nnx}
Let $x>0$. Let $N$ be a random planar network with $n$ labelled vertices and weight $x$ at edges (which are unlabelled).
Then $\D(N)\in(n^{1/4-\e},n^{1/4+\e})$ a.a.s.\ with exponential rate.
\end{lemma}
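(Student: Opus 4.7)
The plan is to condition on the number of edges $M$ of the random planar network $N$, and reduce the statement to Lemma~\ref{lem:Nnm}. As noted in the remark preceding this lemma, the distribution of a planar network with $n$ labelled vertices and $m$ unlabelled edges coincides, up to relabelling, with that of a planar network with $n$ unlabelled vertices and $m$ labelled edges, so Lemma~\ref{lem:Nnm} directly provides the desired diameter estimate on each slice $\{M = m\}$ with $m/n$ in a compact subinterval of $(1,3)$.

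First I would establish exponential concentration of $M$ around $\mu(x)\cdot n$ for some $\mu(x)\in(1,3)$. Let $N(z,y)$ be the bivariate exponential generating function of planar networks, where $z$ marks labelled vertices and $y$ marks edges. The singular analysis of~\cite{BeGa} shows that, for each $y>0$, $z\mapsto N(z,y)$ has a square-root type singularity at a radius $\rho(y)$ that depends smoothly and strictly monotonically on $y$; the corresponding typical edge density is
$$
\mu(y) \;=\; -\frac{y\,\rho'(y)}{\rho(y)},
$$
which lies in $(1,3)$ since every planar network has between $n$ and $3n-6$ edges. Applying Lemma~\ref{lem:tail} to the parameter $M$ after reweighting edges by factors $u_0>1$ and $u_0<1$ sufficiently close to $1$, one obtains that $|M-\mu(x)n|\le \delta n$ holds a.a.s.\ with exponential rate, for any fixed $\delta>0$.

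Now fix $\delta>0$ small enough that $[c,d]:=[\mu(x)-\delta,\mu(x)+\delta]\subset(1,3)$. Conditioned on $M=m$ with $m/n\in[c,d]$, the network $N$ is uniformly distributed on planar networks with $n$ labelled vertices and $m$ unlabelled edges, which by the above remark is equivalent to the uniform model of Lemma~\ref{lem:Nnm}. That lemma yields $\D(N)\in(m^{1/4-\e'},m^{1/4+\e'})$ a.a.s.\ with exponential rate, uniformly over $m/n\in[c,d]$; choosing $\e'<\e$ and $n$ large, this gives $\D(N)\in(n^{1/4-\e},n^{1/4+\e})$ since $m=\Theta(n)$. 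A union bound over the $O(n)$ admissible values of $m$ costs only a polynomial factor, which is absorbed by the exponential decay.

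The main obstacle is the first step, namely obtaining genuinely exponential (not only polynomial) concentration of the edge density around $\mu(x)$; this rests on the smooth and strictly monotone dependence of $\rho(y)$ on $y$ from~\cite{BeGa}, which guarantees that perturbing the edge weight to $xu_0$ with $u_0\neq 1$ shifts the radius of convergence by a positive amount and thus allows Lemma~\ref{lem:tail} to apply to both tails of $M$. The remaining steps are routine continuity and union-bound arguments, and no new combinatorial input beyond Lemma~\ref{lem:Nnm} is required.
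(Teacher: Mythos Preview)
Your proposal is correct and follows essentially the same route as the paper: establish exponential concentration of the edge count $M$ around $\mu(x)n$ with $\mu(x)\in(1,3)$ (the paper simply cites~\cite{BeGa} for this; you spell out the underlying Chernoff/saddle-point argument), then invoke Lemma~\ref{lem:Nnm} on the conditioned slices. One small imprecision: Lemma~\ref{lem:tail} as stated only yields $\chi\le n^{\e}$, not linear-scale concentration around a mean, so what you actually need is the raw saddle-point bound~\eqref{eq:saddle} applied with edge-weight $xu_0$ for $u_0$ slightly above and below~$1$ --- but that is exactly the mechanism you describe, so the issue is purely one of citation.
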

\begin{proof}
As shown in~\cite{BeGa}, the ratio $r=\#(edges)/\#(vertices)$ of $N$ is concentrated around some value $\mu=\mu(x)\in(1,3)$.
Precisely, for each $\delta>0$, there is $c= c(\delta)>0$ such that $$\mathbb{P}\{r\notin(\mu-\delta,\mu+\delta)\}\leq \exp(-cn).$$
Take $\delta$ small enough so that $r-\delta>1$ and $r+\delta<3$. Then Lemma~\ref{lem:Nnm} ensures that
$\D(N)\in [n^{1/4-\e},n^{1/4+\e}]$ a.a.s.\ with exponential rate.
\end{proof}

\subsection{2-connected planar graphs}

Planar networks are very closely related to edge-rooted $2$-connected planar graphs.
In fact, an edge-rooted (i.e., with a marked oriented edge) $2$-connected planar graph $G$ 
yields two planar networks: one where the marked edge is kept (otherly stated, doubled and then 
one copy of the marked edge is deleted) and one where the marked edge
is deleted (in the second case the diameter of the planar network might be larger
than the diameter of $G$, however by a factor of at most $2$).
 Consequently the statement of Lemma~\ref{lem:Nnx} also holds for $N$ a random
edge-rooted $2$-connected planar graph with $n$ (labelled) vertices and weight $x$ at edges. 
And the statement still holds for a random $2$-connected planar graph (unrooted) with $n$ vertices,
since the number of edges can vary only from $n$ to $3n$
(hence the effect of unmarking a root-edge biases the distribution
by a factor of at most $3$). We obtain:

\begin{theorem}\label{theo:2c}
Let $x>0$.
The diameter of a random 2-connected planar graph with $n$ vertices and weight $x$ at edges
is, a.a.s.\ with exponential rate, in the interval $(n^{1/4- \e},
n^{1/4+\e})$.
\end{theorem}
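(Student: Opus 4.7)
The plan is to deduce Theorem~\ref{theo:2c} from Lemma~\ref{lem:Nnx} by two elementary reductions, following the outline sketched just before the theorem statement.

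First, I would transfer the estimate from planar networks to edge-rooted 2-connected planar graphs. Given an edge-rooted 2-connected planar graph $G$ on $n$ labelled vertices with marked oriented edge $e=(u,v)$, the subgraph $N := G \setminus \{e\}$ together with the ordered pole pair $(u,v)$ is a planar network on $n$ vertices, since re-adding the edge $uv$ recovers the 2-connected graph $G$. This yields an essentially bijective correspondence between the two classes, the only subtlety being the case where $u,v$ are already adjacent in $N$, which is handled via the ``edge-kept / doubled-edge'' convention mentioned in the text. Under weight $x$ at edges, the two weighted distributions match up to a uniform factor of $x$ (absorbing the one missing edge). Regarding diameters, $\D(G) \le \D(N)$ is trivial, and $\D(N) \le 2\,\D(G)$: any shortest $a$-$b$ path in $G$ that uses $e$ can be rerouted in $N$ by replacing $e$ with a shortest $u$-$v$ detour, whose length is bounded by the length of the shortest cycle through $e$ in $G$, and that length is $O(\D(G))$ by $2$-connectedness. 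Hence the interval $(n^{1/4-\e}, n^{1/4+\e})$ provided by Lemma~\ref{lem:Nnx} for $\D(N)$ transports (absorbing the factor $2$ into an arbitrarily small shift of $\e$) to the same interval for $\D(G)$.

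Second, I would remove the edge-rooting. A 2-connected planar graph on $n$ vertices has between $n$ and $3n-6$ edges by Euler's formula, hence between $2n$ and $6n-12$ choices of marked oriented edge. The forgetful map from edge-rooted to unrooted graphs is therefore at most $(6n-12)$-to-$1$ and at least $(2n)$-to-$1$, so the weighted distribution on unrooted 2-connected planar graphs differs from the projected distribution on edge-rooted graphs by a bounded multiplicative bias of at most $3$. Since the exceptional probabilities in the ``a.a.s.\ with exponential rate'' framework are of the form $\exp(-n^{c\e})$, inflation by a bounded constant does not degrade the rate, so the estimate passes from edge-rooted to unrooted graphs.

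The main technical obstacle, and the step requiring most care, is the diameter inequality $\D(N) \le C\,\D(G)$ for an absolute constant $C$: this reduces to the fact that in a $2$-connected graph of diameter $D$, every edge lies on a cycle of length $O(D)$, which follows from a short diametral-pair argument applied to the shortest cycle through the edge. Once this is in place, the rest of the proof is a transparent manipulation of bounded-bias distributional reweightings.
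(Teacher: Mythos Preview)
Your proposal is correct and follows essentially the same two-step reduction as the paper: pass from planar networks to edge-rooted $2$-connected planar graphs via the root-edge deletion (controlling the diameter ratio by a constant), then unroot using the fact that the edge count lies between $n$ and $3n-6$ so the bias is bounded. The only addition you make beyond the paper's terse paragraph is an actual justification of the inequality $\D(N)\le C\,\D(G)$ via the ``short cycle through the root edge'' argument, which the paper simply asserts with $C=2$; your midpoint argument on a shortest $u$--$v$ path in $N$ indeed yields $d_N(u,v)\le 2\,\D(G)$, and hence $\D(N)=O(\D(G))$, which is all that is needed.
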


\subsection{Connected planar graphs}\label{sec:1-c}

Here we deduce from Theorem~\ref{theo:2c} that a random connected
planar graph with $n$ vertices has diameter in
$(n^{1/4-\e},n^{1/4+\e})$ a.a.s.\ with exponential rate. We use the block
decomposition presented in Section~\ref{subsec:1to2}, and the inequality~\eqref{eq:ineq_block}.
Again an important point is that if $C$ is a random connected planar graph with $n$ vertices
and weight $x$ at edges, then each block $B$ of size $k$ in
$C$ is a random $2$-connected planar graph with $k$ vertices and weight $x$ at edges.
Note that, formulated on pointed graphs (i.e., graphs with a marked vertex), 
the block-decomposition ensures that a
pointed connected planar graph is obtained as follows: take a collection of
2-connected pointed planar graphs, and merge their marked vertices
into a single vertex; then attach at each non-marked vertex $v$ in
these blocks a pointed connected planar graph $C_v$.
Fix $x>0$.
Call $C(z)$ and $B(z)$ the weighted generating functions, respectively, of connected and 2-connected planar graphs with weight $x$ at edges. Then the decomposition above yields
\begin{equation}\label{eq:FB}
C'(z)=\exp(B'(zC'(z))).
\end{equation}

\begin{lemma}\label{lem:bigblockC}
For $x>0$, a random connected planar graph with $n$ vertices and weight
$x$ at edges has a block of size at least $n^{1-\e}$
a.a.s.\ with exponential rate.
\end{lemma}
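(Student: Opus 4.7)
The plan is to mimic the argument used for the analogous statement on networks: truncate the block generating function, apply the saddle-point bound~\eqref{eq:saddle} just above the radius of convergence of $C(z)$, and compare with the known singular asymptotics of $C(z)$. Let $B_k(z)$ be the polynomial obtained by truncating $B(z)$ to 2-connected planar graphs of size at most $k$, and let $C_k(z)$ be the weighted generating function of connected planar graphs whose blocks all have size at most $k$. The block decomposition (same derivation as for~\eqref{eq:FB}) gives
\begin{equation*}
C_k'(z) = \exp\bigl(B_k'(zC_k'(z))\bigr).
\end{equation*}
The goal is to show that $[z^n]C_k(z)/[z^n]C(z) \leq \exp(-n^{c\e})$ for $k=\lfloor n^{1-\e}\rfloor$, which directly implies the claim.

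Let $\rho_B$ be the radius of convergence of $B(z)$. The standard singular analysis of 2-connected planar graphs gives the coefficient decay $[z^n]B(z)=O(n^{-7/2}\rho_B^{-n})$, so that $B'(\rho_B)$ and $B''(\rho_B)$ are both finite. I will use the fact (established for planar graphs via the Gim\'enez--Noy singular analysis) that the composition scheme~\eqref{eq:FB} is subcritical, i.e.\ $\rho_B B''(\rho_B)<1$, or equivalently $\phi'(\rho_B)>0$, where $\phi(u):=u\exp(-B'(u))$ is the functional inverse of $u(z)=zC'(z)$. Consequently $\rho=\phi(\rho_B)$ and transfer theorems yield $[z^n]C(z)=\Theta(n^{-7/2}\rho^{-n})$. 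Mirroring the network argument, I set $\phi_k(u):=u\exp(-B_k'(u))$ and
\begin{equation*}
u_k = \rho_B\Bigl(1+\frac{1}{k\log k}\Bigr), \qquad y_k = \phi_k(u_k).
\end{equation*}
From the coefficient decay above, $B_k'(u_k)\to B'(\rho_B)$ and $B_k''(u_k)\to B''(\rho_B)$ as $k\to\infty$, so $\phi_k'(u_k)\to\phi'(\rho_B)>0$; hence for $k$ large enough $u_k$ lies strictly below the branch point of $\phi_k$, i.e.\ $y_k$ is strictly below the radius of convergence of $C_k(z)$. A first-order Taylor expansion then gives
\begin{equation*}
y_k - \rho \;\sim\; \phi'(\rho_B)(u_k - \rho_B) \;=\; \frac{\beta}{k\log k},\qquad \beta := \phi'(\rho_B)\rho_B > 0.
\end{equation*}

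To conclude I apply~\eqref{eq:saddle} at $z=y_k$. Since $C_k'(y_k) = u_k/y_k = \exp(B_k'(u_k))$ is uniformly bounded in $k$ and $C_k'$ is increasing on $[0,y_k]$, one has $C_k(y_k)\leq y_k C_k'(y_k) = O(1)$, whence $[z^n]C_k(z) = O(y_k^{-n})$. Combining with the asymptotic $[z^n]C(z) = \Theta(n^{-7/2}\rho^{-n})$ gives
\begin{equation*}
\mathbb{P}(\text{every block has size}\leq k) \;=\; \frac{[z^n]C_k(z)}{[z^n]C(z)} \;=\; O\!\left(n^{7/2}(\rho/y_k)^{n}\right).
\end{equation*}
For $k=\lfloor n^{1-\e}\rfloor$ the estimate $y_k-\rho=\Theta(n^{\e-1}/\log n)$ gives $(\rho/y_k)^n\leq\exp(-\Omega(n^{\e}/\log n))$, which easily overwhelms the polynomial factor $n^{7/2}$ and yields a bound of the form $\exp(-n^{c\e})$ for any $c<1$. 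The main obstacle will be to justify carefully the subcriticality $\rho_B B''(\rho_B)<1$ and the uniform approximations of $B'$ and $B''$ by $B_k'$ and $B_k''$ near $\rho_B$, which together guarantee that $\phi_k'(u_k)$ remains bounded away from $0$ uniformly in $k$; both follow from the singular analysis of planar graphs but require some bookkeeping.
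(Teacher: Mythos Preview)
Your proposal is correct and follows essentially the same route as the paper: truncate the block series, set $u_k=\rho_B(1+1/(k\log k))$, use that $\phi_k'(u_k)\to\phi'(\rho_B)>0$ (which is exactly the subcriticality input from Gim\'enez--Noy) to stay below the branch point, and apply the saddle bound~\eqref{eq:saddle}. The only cosmetic difference is that the paper works throughout with the pointed series $E(z)=zC'(z)$ and its truncation $E_k$, whereas you work with $C_k$ and recover the needed bound via $C_k(y_k)\le y_kC_k'(y_k)=u_k$; this is equivalent. One point to tighten: your ``first-order Taylor expansion'' line $y_k-\rho\sim\phi'(\rho_B)(u_k-\rho_B)$ silently absorbs the term $\phi_k(\rho_B)-\phi(\rho_B)$, which is $O(k^{-3/2})$ by the coefficient decay $[z^i]B'(z)=O(i^{-5/2}\rho_B^{-i})$; the paper writes this term out explicitly, and you should too, since it is what guarantees that the error is $o(1/(k\log k))$.
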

\begin{proof}
Denote by $E(z)=zC'(z)$ the series counting pointed connected planar graphs
with weight $x$ at edges.
Note that the functional
inverse of $E(z)$ is $\phi(u)=u\exp(-g(u))$, where $g(u)=B'(u)$.
Call $\rho$ the radius of convergence of $C(z)$ and $R$ the radius of convergence of $B(u)$.
Define $b_i:=[u^i]g(u)$, $g_k(u):=\sum_{i\leq k}b_iu^i$, and call $E_k(z)$ the series
of pointed connected planar graphs where all blocks have size at most $k$. Note that
the probability of a random connected planar graph with $n$ vertices to have all its
blocks of size at most $k$ is $[z^n]E_k(z)/[z^n]E(z)$. Clearly
$$
E_k(z)=z\exp(g_k(E_k(z)),
$$
hence the functional inverse of $E_k(z)$ is $\phi_k(u)=u\exp(-g_k(u))$. Call $\rho_k$ the singularity
of $E_k(z)$. Since $\phi_k(u)$ is analytic everywhere, the singularity at $\rho_k$ is caused by a branch point,
i.e., $\rho_k=\phi_k(R_k)$, where $R_k$ is the unique $u>0$ such that $\phi_k'(u)=0$:
$\phi_k'(u)>0$ for $0<u<R_k$ and $\phi'(u)<0$ for $u>R_k$. According to~\eqref{eq:saddle}, $[z^n]E_k(z)\leq E_k(s)s^{-n}$  for $s<\rho_k$, or equivalently, writing $u=E_k(s)$,
\begin{equation}\label{eq:Fkb}
[z^n]E_k(z)\leq u\phi_k(u)^{-n}\ \ \mathrm{for\ all}\ u\ \mathrm{such\ that}\ \phi_k'(u)>0.
\end{equation}
Define $u_k=R\cdot(1+1/(k\log k))$. Note that
$$
g_k(R)\leq g_k(u_k)\leq \left(\frac{u_k}{R}\right)^kg_k(R).
$$
Since $(u_k/R)^k\to 1$ we have $g_k(u_k)\to g(R)$. Similarly $g_k'(u_k)\to g'(R)$, hence $\phi_k'(u_k)\to \phi'(R)$.
It is shown in~\cite{gimeneznoy} that $a=\phi'(R)$ is strictly positive (i.e., the singularity of $E(z)$
is not due to a branch point), so for $k$ large enough, $\phi_k'(u_k)\geq a/2>0$, i.e.,
the bound~\eqref{eq:Fkb} can be used, giving
$$
[z^n]E_k(z)\leq 2\ \!R\ \!\phi_k(u_k)^{-n}\ \ \mathrm{for\ }k\ \mathrm{large\ enough\ and\ any}\ n\geq 0.
$$
Moreover
$$
\phi_k(u_k)-\rho=\left(\phi_k(u_k)-\phi_k(R)\right)+\left(\phi_k(R)-\phi(R)\right)= a\cdot(u_k-R)+O(k^{-3/2})\sim\frac{a\ \!R}{k\log k},
$$
where $\phi_k(R)-\phi(R)=O(k^{-3/2})$ is due to $g(R)-g_k(R)=O(k^{-3/2})$,
which itself follows from the estimate $b_i=\Theta(R^{-i}i^{-5/2})$ shown
in~\cite{gimeneznoy}. Hence for $k$ large enough and any $n\geq 0$:
$$
[z^n]E_k(z)\leq 2\left(\rho+\frac{a\ \!R}{2k\log k}\right)^{-n}.
$$
Hence, for $k=n^{1-\e}$, $[z^n]E_k(z)=O(\rho^{-n}\exp(-n^{\e/2}))$.
Finally, according to~\cite{gimeneznoy}, $[z^n]E(z)=\Theta(\rho^{-n}n^{-5/2})$, so $[z^n]E_k(z)/[z^n]E(z)=O(\exp(-n^{\e/3}))$.
\end{proof}

\subsection*{Remark.} It is shown in \cite{GNR} and \cite{PS} that a random connected planar graph has a.a.s. a block of linear size, but not with exponential rate. This is the reason for the previous lemma.

\medskip

Lemma~\ref{lem:bigblockC} directly implies that a random connected planar graph with $n$ vertices has diameter
at least $n^{1/4-\e}$. Indeed it has a block of size $k\geq n^{1-\e}$ a.a.s.\ with exponential rate
and since the block is uniformly distributed in size $k$, it has diameter at least $k^{1/4-\e}$
a.a.s.\ with exponential rate.

Let us now prove the upper bound. For this purpose we use the inequality given in Section~\ref{subsec:1to2}:
$$\D(C)\leq (\D(\tau)+1)\cdot\mathrm{max}_i(\D(B_i)),$$
where $C$ denotes a connected planar graph, $\tau$ is the Bv-tree, and the $B_i$'s
are the blocks of $C$.
We show that $\D(\tau)\leq n^{\e}$ a.a.s.\ and that $\mathrm{max}_i(\D(B_i))\leq n^{1/4+\e}$ a.a.s., both
with exponential rate.

To show that $\D(\tau)\leq n^{\e}$ we need a counterpart of Lemma~\ref{lem:height} for   \emph{critical} equations of the form~\eqref{eq:tree} 
(Indeed, note that $y\equiv y(z)=C'(z)$ is solution of $y=F(z,y)$, where $F(z,y)=\exp(B'(zy))$; in addition the height of the Bv-tree,
rooted at the pointed vertex, is a height-parameter of that system.)

\begin{lemma}\label{lem:height2}
Let $\cT$ be a combinatorial class endowed with a weight-function $w(\cdot)$ so that the corresponding (weighted) generating function
$y(z)$ satisfies an equation of the form $y=F(z,y)$ that is critical.

Let $\xi$ be a height-parameter for~\eqref{eq:tree} and let $T_n$ be
taken at random in $\cT_n$ under the weighted distribution in size
$n$. Assume that $[z^n]y(z)=\Omega(n^{-\alpha}\rho^{-n})$ for some $\alpha$.
Then $\xi(T_n)\leq n^{\e}$ a.a.s.\ with
exponential rate.
\end{lemma}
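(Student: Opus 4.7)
The plan is to estimate $P(\xi(T_n) > h) = [z^n](y(z)-y_h(z))/[z^n]y(z)$ via the saddle-point bound~\eqref{eq:saddle}, exploiting the fact that in the critical regime the key quantity $\kappa := F_y(\rho,\tau)$ is strictly less than~$1$. Indeed, differentiating $y = F(z,y)$ gives $y'(z)(1 - F_y(z,y(z))) = F_z(z,y(z))$; since all three series involved have non-negative coefficients and $y'(\rho)$ converges as a sum in the critical case, one necessarily has $F_y(\rho,\tau) \leq 1$, and strictness holds as soon as $F_z(\rho,\tau) > 0$, which is true in all non-degenerate cases.

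The main step is the coefficient-wise bound
$$y(z) - y_h(z) \preceq F_y(z,y(z))^h \cdot y(z),$$
which I would prove by induction on~$h$, the case $h=0$ being immediate from $y_0 = 0$. For the induction step, write $F(z,y) = \sum_{k \geq 0} f_k(z)\,y^k$ with the $f_k$ having non-negative coefficients, and factor
$$y - y_{h+1} = F(z,y) - F(z,y_h) = \sum_{k \geq 1} f_k(z)\,(y - y_h)\bigl(y^{k-1} + y^{k-2}y_h + \cdots + y_h^{k-1}\bigr).$$
The monotonicity $y_h \preceq y$ (established by a parallel induction using that $F(z,\cdot)$ preserves coefficient-wise order on series with non-negative coefficients) shows that each parenthesised factor is $\preceq k\,y^{k-1}$, so $y - y_{h+1} \preceq F_y(z,y)\cdot(y - y_h)$ and the claim follows.

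Evaluating at $z = \rho$ gives $(y - y_h)(\rho) \leq \kappa^h\,\tau$, and~\eqref{eq:saddle} then yields $[z^n](y - y_h) \leq \kappa^h\,\tau\,\rho^{-n}$. Combined with the hypothesis $[z^n]y(z) = \Omega(n^{-\alpha}\rho^{-n})$, this produces
$$P(\xi(T_n) > h) = O\bigl(n^\alpha \kappa^h\bigr).$$
Setting $h = n^\epsilon$ and using $\kappa < 1$, this probability decays as $O\bigl(n^\alpha \exp(-c\,n^\epsilon)\bigr)$ with $c = -\log\kappa > 0$, which matches the exponential rate~\eqref{eq:exp_small}. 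No serious obstacle is expected; the only point requiring care is the finiteness of $F_y(\rho,\tau)$, but this follows from the non-negativity of the coefficient series $F_y(z,y(z))$ together with the identity $(1-F_y)y' = F_z$ and the convergence of $y'(\rho)$ that is assumed in the critical setting.
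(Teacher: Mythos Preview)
Your argument is correct. The paper's proof reaches the same conclusion by a closely related but technically different route: instead of bounding the tails $y-y_h$ coefficient-wise, it works with the increments $\uy_h=y_h-y_{h-1}$ evaluated at $z=\rho$, applies the mean value theorem to $F(\rho,\tau_h)-F(\rho,\tau_{h-1})$ to get $\utau_h=O(e^{-ch})$, and then packages this into the bivariate series $y(z,u)=\sum_h\uy_h(z)u^h$ so as to invoke Lemma~\ref{lem:tail}. Your coefficient-wise induction $y-y_{h+1}\preceq F_y(z,y)\cdot(y-y_h)$ is the formal-series analogue of that mean-value step and gives the sharper explicit constant $\kappa^h$ directly, without the detour through Lemma~\ref{lem:tail}; the paper's route, on the other hand, fits more uniformly into the framework used elsewhere (e.g.\ in the vectorial extension of Lemma~\ref{lem:height3}). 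Both approaches ultimately rely on the same fact, namely $F_y(\rho,\tau)<1$ in the critical regime, and yield the same $O(n^{\alpha}\kappa^{h})$ tail bound.

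One small remark: your opening paragraph rederives $F_y(\rho,\tau)<1$ from the convergence of $y'(\rho)$, but in the paper this inequality is part of the \emph{definition} of ``critical'' (the displayed condition ``$F(\rho,\tau)<1$'' in the text is a typo for $F_y(\rho,\tau)<1$), so you may simply cite it.
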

\begin{proof}
For $h\geq 0$ we define the generating function $y_h(z)=\sum_{\tau\in\cT,\ \xi(\tau)\leq h}z^{|\tau|}w(\tau)$, so that
$$
y_h(z)=F(z,y_{h-1}(z)), 
$$
and define $\uy_h(z)=\sum_{\tau\in\cT,\ \xi(\tau)=h}z^{|\tau|}w(\tau)$
(i.e., $\uy_h(z)=y_h(z)-y_{h-1}(z)$). Let $\tau_h=y_h(\rho)$ and $\utau_h=\uy_h(\rho)$. Note that $y(z,u)=\sum_h \uy_h(z)u^h$
is the bivariate generating function of $\cT$ where $z$ marks the size and $u$ marks the height.
For $h>0$ we have
$$
\tau_{h+1}-\tau_{h}=F(\rho,\tau_{h})-F(\rho,\tau_{h-1})=F_y(\rho,u_h)\cdot(\tau_h-\tau_{h-1}),\ \ \mathrm{for\ some\ }u_h\in[\tau_{h-1},\tau_h].
$$
Since $\tau_h$ converges to $\tau$ as $h\to\infty$, $u_h$ also converges to $\tau$, hence $F_y(\rho,u_h)$ converges to $F_y(\rho,\tau)<1$.
Consequently $\utau_h=\tau_h-\tau_{h-1}$ is $O(\exp(-ch))$ for some $c>0$, so that $y(\rho,u)$ converges for $u<\exp(c)$.
Hence, by Lemma~\ref{lem:tail}, we conclude that $\xi(T_n)\leq n^{\e}$ a.a.s.\ with exponential rate.
\end{proof}

\begin{lemma}\label{lem:diamtau}
For $x>0$, the block-decomposition tree $\tau$ of a random connected planar graph with $n$ vertices
and weight $x$ at edges has diameter
at most $n^{\e}$ a.a.s.\ with exponential rate.
\end{lemma}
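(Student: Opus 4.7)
The plan is to apply Lemma~\ref{lem:height2} to the equation $y=F(z,y)$ with $F(z,y)=\exp(B'(zy))$ satisfied by $y\equiv C'(z)$ (see~\eqref{eq:FB}); as noted just before Lemma~\ref{lem:height2}, the height of the Bv-tree from the pointed vertex is a height-parameter for this system. Two hypotheses must be verified: (i) that $[z^n]y(z)=\Omega(n^{-\alpha}\rho^{-n})$ for some $\alpha$, and (ii) the criticality of the system. For~(i), the estimate $[z^n]C(z)=\Theta(n^{-7/2}\rho^{-n})$ of Gim\'enez--Noy~\cite{gimeneznoy} yields $[z^n]C'(z)=\Theta(n^{-5/2}\rho^{-n})$, so $\alpha=5/2$ works.

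For~(ii), set $y_\rho:=C'(\rho)$. The block-decomposition is an instance of the ``critical composition'' scheme of~\cite{BaFlScSo01,gimeneznoy}: the dominant singularity $\rho$ of $C'$ arises because $zC'(z)$ meets the singularity $R$ of $B(z)$, so $\rho\cdot y_\rho=R$, and $F(z,y)=\exp(B'(zy))$ inherits its non-analyticity at $(\rho,y_\rho)$ from that of $B'$ at $R$. Since $B$ has a $(1-z/R)^{5/2}$ singular behaviour~\cite{gimeneznoy}, $B'(R)$ is finite, so $F(\rho,y_\rho)=\exp(B'(R))$ converges; and since $C'$ has a $(1-z/\rho)^{3/2}$ singularity at $\rho$, $y'(\rho)=C''(\rho)$ is finite, which is the equivalent formulation of criticality.

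Lemma~\ref{lem:height2} then yields that, under the weighted pointed distribution on connected planar graphs with $n$ vertices and weight $x$ at edges, the Bv-tree height $h$ from the marked vertex is at most $n^{\e}$ a.a.s.\ with exponential rate. To conclude for the unpointed model, observe that sampling a pointed object amounts to sampling an unpointed connected planar graph and choosing a vertex $v$ uniformly at random, and that $\D(\tau)\leq 2h_v$ for every vertex $v$. The event $\{\D(\tau)>2n^{\e}\}$ (which depends only on the unpointed graph) is therefore contained in $\{h_v>n^{\e}\}$, so
\begin{equation*}
\mathbb{P}(\D(\tau)>2n^{\e})\leq\mathbb{P}(h_v>n^{\e})\leq e^{-n^{c\e}}
\end{equation*}
for some $c>0$; absorbing the factor $2$ into the exponent (equivalently, replacing $\e$ by $\e/2$) gives the stated bound.

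The main technical input is the criticality verification, which depends on the detailed singularity analysis of $B$ and $C$ from~\cite{gimeneznoy}; granted that, everything else is a direct invocation of Lemma~\ref{lem:height2} together with the short averaging argument above.
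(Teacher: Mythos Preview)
Your argument is correct and follows essentially the same approach as the paper. The paper's proof differs only cosmetically: it works with $y(z)=zC'(z)$ and $F(z,y)=z\exp(B'(y))$ (rather than your $y=C'(z)$, $F(z,y)=\exp(B'(zy))$, which is the form mentioned in the parenthetical remark preceding Lemma~\ref{lem:height2}), takes the block-height (number of $B$-nodes on a root-path) as the height-parameter with the bound $\D(\tau)\leq 4h(\tau)+4$, and skips your pointed-to-unpointed averaging step since the underlying graph distribution is unchanged by adding a uniform marked vertex.
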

\begin{proof}
Let $C$ be a pointed connected planar graph, and $\tau$ the associated Bv-tree, rooted at the marked vertex of $C$.
Define the block-height $h(\tau)$ of $\tau$ as the maximal number of blocks (B-nodes) over all paths starting from the root.
Clearly $\D(\tau)\leq 4h(\tau)+4$. In addition the block-height is clearly a height-parameter for the equation
$$
y=F(z,y),\ \ \ \mathrm{where}\ F(z,y)=z\exp(B'(y))
$$
satisfied by the (weighted) generating function $y(z)=zC'(z)$ of pointed connected planar graphs. It is shown in~\cite{gimeneznoy}
that $y'(z)$ converges at its radius of convergence $\rho$. Hence the equation is critical; by Lemma~\ref{lem:height2},
$h(\tau)\leq n^{\e}$ a.a.s.\ with exponential rate, hence $\D(\tau)\leq n^{\e}$ a.a.s.\ with exponential rate.
\end{proof}

Lemma~\ref{lem:diamtau} easily implies that the diameter of a random connected planar graph $C$ with $n$
vertices is at most $n^{1/4+\e}$ a.a.s.\ with exponential rate. Indeed, calling
$\tau$ the block-decomposition tree of $C$ and $B_i$ the blocks of $C$, one has
$$
\D(C)\leq (\D(\tau)+1)\cdot\mathrm{max}_i(\D(B_i)).
$$
Lemma~\ref{lem:diamtau} ensures that $\D(\tau)\leq n^{\e}$ a.a.s.\ with exponential rate.
Moreover Theorem~\ref{theo:2c}
easily implies that a random 2-connected planar graph of size $k\leq n$
 has diameter at most $n^{1/4+\e}$ a.a.s.\ with exponential rate, whatever $k\leq n$ is (proof by splitting in two cases: $k\leq n^{1/4}$ and
$n^{1/4}\leq k\leq n$, similarly as in the proof of Theorem~\ref{theo:diam2c}).
Hence, since each of the blocks has size at most $n$,
 $\mathrm{max}_i(\D(B_i))\leq n^{1/4+\e}$ a.a.s.\ with exponential rate. Therefore we have

\begin{theorem}\label{theo:cx}
For $x>0$, the diameter of a random connected planar graph with $n$ vertices and weight
$x$ at edges is, a.a.s.\ with exponential rate, in the interval $    (n^{1/4- \e},
n^{1/4+\e})$.
\end{theorem}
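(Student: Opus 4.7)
My plan is to combine the three ingredients that have been set up just above: Lemma~\ref{lem:bigblockC} (existence of a giant block), Lemma~\ref{lem:diamtau} (shallow Bv-tree), and Theorem~\ref{theo:2c} (diameter of random $2$-connected planar graphs), together with the two-sided inequality~\eqref{eq:ineq_block}. The key structural fact that makes the reduction clean is that, conditionally on its size $k$, a block of a random weighted connected planar graph with $n$ vertices is itself uniformly distributed on $2$-connected planar graphs with $k$ vertices (and the same weight $x$ at edges); this is standard for the vertex-block decomposition and is exactly the statement that is needed to apply Theorem~\ref{theo:2c} conditionally.

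For the lower bound, Lemma~\ref{lem:bigblockC} gives, a.a.s.\ with exponential rate, a block $B$ of size $k \geq n^{1-\e}$. Conditioning on its size, $B$ is a uniformly random $2$-connected planar graph with $k$ vertices and weight $x$ at edges, so Theorem~\ref{theo:2c} yields $\D(B)\geq k^{1/4-\e}\geq n^{(1-\e)(1/4-\e)}\geq n^{1/4-2\e}$ a.a.s.\ with exponential rate. Since~\eqref{eq:ineq_block} gives $\D(C)\geq \max_i \D(B_i)\geq \D(B)$, this proves the desired lower bound (after relabelling $\e$).

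For the upper bound, we use the right inequality in~\eqref{eq:ineq_block}: $\D(C)\leq (\D(\tau)+1)\cdot \max_i \D(B_i)$. Lemma~\ref{lem:diamtau} already gives $\D(\tau)\leq n^{\e}$ a.a.s.\ with exponential rate, so it remains to control $\max_i \D(B_i)$. The main subtlety, and the only real issue in the proof, is that blocks can have any size $k\leq n$, so one must argue uniformly in $k$. I would split into two cases, as in the proof of Theorem~\ref{theo:diam2c}: when $k\leq n^{1/4}$, trivially $\D(B_i)\leq k\leq n^{1/4}$; when $n^{1/4}\leq k\leq n$, Theorem~\ref{theo:2c} applied to $B_i$ at a suitable $\delta<\e$ gives $\D(B_i)\leq k^{1/4+\delta}\leq n^{1/4+\delta}$ with probability $1-\exp(-k^{c\delta})\geq 1-\exp(-n^{c\delta/4})$. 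A union bound over the at most $n$ blocks (each failure event is exponentially small in $n$) yields $\max_i \D(B_i)\leq n^{1/4+\e}$ a.a.s.\ with exponential rate. Multiplying with the bound on $\D(\tau)$ gives $\D(C)\leq n^{1/4+2\e}$, which after relabelling $\e$ completes the proof.

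Essentially all the technical work has been done in the preceding lemmas; the only new step is the uniform-in-$k$ handling of the block-diameters, which is routine and identical in spirit to the argument appearing in the proof of Theorem~\ref{theo:diam2c}. Thus the proposal is short: apply Lemma~\ref{lem:bigblockC} for the lower bound, apply Lemma~\ref{lem:diamtau} together with a uniform-in-$k$ version of Theorem~\ref{theo:2c} (obtained by the case split above) for the upper bound, and combine using~\eqref{eq:ineq_block}.
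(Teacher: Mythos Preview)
Your proposal is correct and follows essentially the same approach as the paper: the lower bound via Lemma~\ref{lem:bigblockC} combined with Theorem~\ref{theo:2c} applied to the large block, and the upper bound via Lemma~\ref{lem:diamtau} together with the case split $k\leq n^{1/4}$ versus $n^{1/4}\leq k\leq n$ to control $\max_i\D(B_i)$, all tied together by~\eqref{eq:ineq_block}. The paper's argument, given in the paragraphs immediately preceding the theorem statement, is exactly this, including the explicit reference to the same two-case split ``similarly as in the proof of Theorem~\ref{theo:diam2c}''.
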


We can now complete the proof of Theorems~\ref{theo:main} and~\ref{theo:main2}.
Theorem~\ref{theo:main} is just Theorem~\ref{theo:cx} for $x=1$.
To show Theorem~\ref{theo:main2}, one uses the fact (proved in~\cite{gimeneznoy}) that for each $\mu\in(1,3)$
there exists $x>0$ such that a random connected
planar graph with $n$ edges and weight $x$
at edges has probability $\Theta(n^{-1/2})$ to have $\lfloor \mu n\rfloor$ edges.\\


\section{Proof of Theorem~\ref{theo:Nm}}\label{sec:proof_theo_delayed}
The proof of Theorem~\ref{theo:Nm} follows the same lines as the proof of Theorem~\ref{theo:cx},
with the RMT-tree playing the role that the Bv-tree had in Theorem~\ref{theo:cx}.
The lower bound is obtained from the fact, established in Lemma~\ref{lem:big3conn},
that a random planar network
has a.a.s.\ a ``big'' 3-connected component. The upper bound is obtained
from the inequality given in Section~\ref{subsec:2to3},
\begin{equation}\label{eq:ineqprooftheo}
\D(G)\leq \mathrm{max}_i(\D(B_i))\cdot(\D(\tau)+1)\cdot  \mathrm{max}_{(u,v)\in\mathcal{E}_{\mathrm{virt}}}\mathrm{Dist}_G(u,v)
\end{equation}
where $G$ is  the 2-connected
planar graph obtained by connecting the two poles of the considered planar network,
$\tau$ is the RMT-tree of $G$, the $B_i$'s are
the bricks of $G$, and $\mathcal{E}_{\mathrm{virt}}$ is the set of virtual edges of $G$.
To get the upper bound we will successively prove that a.a.s.\ with exponential rate we have
 $\D(\tau)\leq m^{\e}$ (in Lemma~\ref{lem:diamtaunetwork}),
 $\mathrm{max}_i(\D(B_i))\leq m^{1/4+\e}$ (in Lemma~\ref{lem:diam_bricks}), and
$\mathrm{max}_{(u,v)\in\mathcal{E}_{\mathrm{virt}}}\mathrm{Dist}_G(u,v)\leq m^{\e}$
(in Lemma~\ref{lem:maxde}).

First we need the following lemma, which is a counterpart of Lemmas~\ref{lem:root-face-degree-planar-maps} and~\ref{lem:root-face-2conn}
for 3-connected maps.
\begin{lemma}\label{lem:root-face-degree-3-conn}
Let $T(z,u)$ be the generating function of  rooted $3$-connected maps
  where $z$ marks the number of non-root
  edges, $u$ marks
the root-face degree, and with weight $x$ at each vertex not incident
to the root-edge. Let $\rho$ be the radius
of convergence of $T(z,1)$.
Then there is $u_0>1$ such that $T(\rho,u_0)$ converges.
In addition for $0<a<b$, the value of $u_0$ can be chosen uniformly over $x\in[a,b]$,
and $M_i(z,u_0)$ is uniformly bounded over $x\in[a,b]$.
\end{lemma}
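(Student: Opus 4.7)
The proof will follow the template of Lemma~\ref{lem:root-face-2conn}, but one level deeper in the hierarchy of decompositions: instead of using the core decomposition of a general map into a 2-connected core plus pieces, we will use the 3-connected-core decomposition of plane networks (Section~\ref{sec:3ccore}) to carry the bound from 2-connected maps down to 3-connected maps. The key input is Lemma~\ref{lem:root-face-2conn}.

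First, I would introduce bivariate versions of the series appearing in~\eqref{eq:planeNetworks}, namely $N(z,u)$, $S(z,u)$, $P(z,u)$, $\wh{N}(z,u)$ for plane networks, series, parallel, and 3-connected-core networks respectively, where $u$ marks the root-face degree. Bivariate versions of the series and parallel compositions are straightforward, and the key relation is a bivariate substitution linking $\wh{N}$ to $T$: each non-root edge of the 3-connected core is substituted by a plane network, and the root-face degree of the substituted network is a sum of root-face degree contributions of the plane networks substituted on the edges of the root-face of $T$. This gives a relation of the form $\wh{N}(z,u) = T(A(z),B(z,u))$, where $A(z)$ substitutes edges off the root-face (so $u$ does not intervene) and $B(z,u)$ substitutes edges on the root-face (so $u$ does intervene, via a substitution $u \mapsto N(z,u)/N(z,1)$ analogous to the one used in the proof of Lemma~\ref{lem:root-face-2conn}).

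Second, I would transfer the bound of Lemma~\ref{lem:root-face-2conn} from $C$ to $N$. The elementary relation $C(z)=z+xz+xzN(z)$ between 2-connected maps and plane networks extends to the bivariate level (modulo a simple bookkeeping for the root-edge's contribution to the root-face), so the bound $C(R,v_0)<\infty$ with $v_0>1$ from Lemma~\ref{lem:root-face-2conn} translates into a bound $N(R_N,v_0')<\infty$ for some $v_0'>1$, where $R_N$ is the singularity of $N(z,1)$. Third, inverting the substitution $\wh{N}(z,u) = T(A(z),B(z,u))$, one obtains $T(\rho,u_0) < \infty$ with $u_0 := B(R_N,v_0')$, where $\rho$ is the singularity of $T(z,1)$. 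The fact that $u_0$ is strictly greater than $1$ uses that the scheme relating $T$ to $N$ is critical in the sense of~\cite{BaFlScSo01}, so $N(R_N)$ equals the singularity $\rho$ of $T(z)$, and that $B(R_N,u)$ is a strictly increasing function of $u$ sending $u=1$ to $u=N(R_N)/N(R_N)=1$. Finally, uniformity over $x\in[a,b]$ follows from the uniformity in Lemma~\ref{lem:root-face-2conn} together with the continuity in $x$ of $R_N$, $\rho$, and of the functions $A,B$, all on the compact interval $[a,b]$.

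The main obstacle is writing out the bivariate substitution $\wh{N}(z,u)=T(A(z),B(z,u))$ correctly: one must be careful that each non-root edge of $T$ contributes to the root-face of the substituted plane network only when it lies on the root-face of $T$, and that the substituted plane networks contribute their own root-face degree (not the opposite face). Once this bookkeeping is in place, the inversion step is routine and parallels the argument used at the end of the proof of Lemma~\ref{lem:root-face-2conn}.
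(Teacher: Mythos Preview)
Your proposal is correct and follows exactly the approach indicated in the paper's own (very terse) proof, which simply says to derive the result from Lemma~\ref{lem:root-face-2conn} via a bivariate version of Equation~\eqref{eq:planeNetworks}, mirroring how Lemma~\ref{lem:root-face-2conn} was derived from Lemma~\ref{lem:root-face-degree-planar-maps} via a bivariate version of~\eqref{eq:MCeq}. You have correctly unpacked the steps: pass from $C$ to $N$ via the linear relation $C(z)=z+xz+xzN(z)$, use coefficient domination to get convergence of $\wh{N}(R_N,v_0')$, identify $N(R_N)=\rho$ by criticality of the scheme, and read off $T(\rho,u_0)$ with $u_0=B(R_N,v_0')>1$ from the bivariate substitution; your caveat about the bookkeeping of root-face versus non-root-face edges in the substitution is the only place requiring care, and it is routine once set up.
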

\begin{proof}
The result is derived from Lemma~\ref{lem:root-face-2conn}
using a bivariate version of Equation~\eqref{eq:planeNetworks},
in the very same way that Lemma~\ref{lem:root-face-2conn} is derived
 from Lemma~\ref{lem:root-face-degree-planar-maps} using a bivariate
 version of
 Equation~\eqref{eq:MCeq}.
\end{proof}

To carry out the proof it is useful to rely on a well-known recursive decomposition of planar networks 
that derives from the RMT-tree.
Call a planar network $D$ \emph{polyhedral} if the poles are not adjacent and the addition of an edge between the poles
gives a $3$-connected planar graph with at least $4$ vertices.
Similarly as in the case of embedded graphs (see Section \ref{sec:3ccore}), a planar network is either obtained as several planar networks in series (S-network),
or as several planar networks in parallel (P-network), or as a polyhedral planar network where each edge is substituted by an arbitrary planar network (H-network). 
This can also be seen using the RMT-tree. Indeed let $B=D+e$ be the $2$-connected planar graph obtained from $D$
by adding an edge $e$ between the two poles, and let $\tau$ be the RMT-tree of $B$. Then $e$ corresponds to a leaf $\ell$ of $\tau$,
and the type of the inner node $\nu$ of $\tau$ adjacent to $\ell$ gives the type of the planar network (S-network  if $\nu$ is an R-node,
P-network if $\nu$ is an M-node, H-network if $\nu$ is a T-node).
Let $D\equiv D(z)$, $S\equiv S(z)$, $P\equiv P(z)$, $H\equiv H(z)$
be respectively the generating functions of planar networks,
series-networks, parallel networks, and polyhedral networks, where $z$ marks the number of edges
and with weight $x$ at each non-pole vertex.
And let $T(z)$ be the series of edge-rooted 3-connected planar graphs
where $z$ marks the number of non-root edges.
One finds (see~\cite{Walsh}):

\begin{equation}\label{eq:syst_networks}
\left\{
\begin{array}{lll}
D&=&z+S+P+H,\\
S&=&(z+P+H)xD,\\
P&=&(1+z)\exp(S+H)-1-z-S-H,\\
H&=&T(D).
\end{array}
\right.
\end{equation}

The equation system above
is similar to the one for plane networks; the difference is that for planar networks assembled in parallel, the order does
not matter (since the graph is not equipped with a plane embedding).
Note that the $2$nd equation gives $S=(D-S)xD$, i.e., $S=xD^2/(1+xD^2)$,
and the $3$rd equation gives $z+S+P+H=(1+z)\exp(S+H)-1$. Since $D=z+S+P+H$,
we finally obtain
\begin{equation}
D=(1+z)\exp\left(\frac{xD^2}{1+xD}+T(D)\right)-1.
\end{equation}

\begin{lemma}\label{lem:big3conn}
For $x>0$, let $N$ be a random planar network with $m$ (labelled) edges and weight $x$ at (unlabelled) vertices.
Then $N$ has a $3$-connected component (a $T$-brick in the tree-decomposition) of size at least $m^{1-\e}$ a.a.s.\ with exponential rate.
\end{lemma}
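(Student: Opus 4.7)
The plan is to mirror the argument of Lemma~\ref{lem:bigblockC} almost verbatim, substituting the block decomposition of a connected planar graph by the decomposition of a planar network into $3$-connected components encoded by~\eqref{eq:syst_networks}. Let $T_k(z)$ denote the truncation of $T(z)$ keeping only terms of degree $\leq k$, and let $D_k(z)$ be the generating function of planar networks all of whose $3$-connected components have size at most $k$. Then $D_k$ satisfies the same equation as $D$ but with $T$ replaced by $T_k$, and the probability we wish to bound equals $[z^m]D_k(z)/[z^m]D(z)$. The denominator is known from~\cite{BeGa} to be $\Theta(m^{-5/2}\rho^{-m})$, where $\rho$ is the radius of convergence of $D$.

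To control the numerator I would invoke the saddle inequality~\eqref{eq:saddle}. Let $\phi(u)$ and $\phi_k(u)$ be the functional inverses of $D$ and $D_k$, and set $R=D(\rho)$. Because $T_k$ is a polynomial, the equation defining $D_k$ is subcritical, so the singularity $\rho_k$ of $D_k$ comes from a branch point of $\phi_k$; in particular $[z^m]D_k(z)\leq u\,\phi_k(u)^{-m}$ for every $u$ with $\phi_k'(u)>0$. The key analytic input, established in~\cite{BeGa}, is that the singularity of $D$ is \emph{not} a branch point, so $a:=\phi'(R)>0$. Setting $u_k = R(1+1/(k\log k))$ and arguing exactly as in Lemma~\ref{lem:bigblockC}, one checks that $\phi_k(u_k)\to \rho$ and $\phi_k'(u_k)\to a>0$, so the saddle bound applies for all $k$ large enough.

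The quantitative estimate then follows from the expansion
$$
\phi_k(u_k)-\rho=\bigl(\phi_k(u_k)-\phi_k(R)\bigr)+\bigl(\phi_k(R)-\phi(R)\bigr)\sim\frac{a\,R}{k\log k},
$$
where the second term is $O(k^{-3/2})$ by virtue of the coefficient asymptotics $[u^i]T(u)=\Theta(R^{-i}i^{-5/2})$ (again from~\cite{BeGa}, and playing the role that $b_i=\Theta(R^{-i}i^{-5/2})$ played in Lemma~\ref{lem:bigblockC}). Taking $k=m^{1-\e}$ one obtains $\phi_k(u_k)^{-m}=O(\rho^{-m}\exp(-m^{\e/2}))$, hence
$$
\frac{[z^m]D_k(z)}{[z^m]D(z)}=O\bigl(m^{5/2}\exp(-m^{\e/2})\bigr),
$$
which is exponentially small in $m$.

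The main obstacle is simply verifying that the two analytic inputs used in the connected-planar-graph case from~\cite{gimeneznoy}, namely the strict positivity $\phi'(R)>0$ of the derivative of the inverse series at the dominant singularity, and the $i^{-5/2}$ coefficient asymptotics of the appropriate ``inner'' series, carry over. Both are available in~\cite{BeGa} for planar networks and edge-rooted $3$-connected planar graphs, so once they are quoted the proof is a direct transcription of Lemma~\ref{lem:bigblockC}.
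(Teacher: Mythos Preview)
Your proposal is correct and follows essentially the same route as the paper: both proofs are explicit transcriptions of Lemma~\ref{lem:bigblockC}, replacing $B'(u)$ by the combination $xu^2/(1+xu)+T(u)$ that arises from the network equation, truncating $T$ to $T_k$, and invoking from~\cite{BeGa} the two analytic facts $\phi'(R)>0$ and $[u^i]T(u)=\Theta(R^{-i}i^{-5/2})$ to carry the saddle-point estimate through. If anything, you spell out the expansion $\phi_k(u_k)-\rho\sim aR/(k\log k)$ in more detail than the paper, which simply states that the argument is the same as in Lemma~\ref{lem:bigblockC}.
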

\begin{proof}
The proof is very similar to the one of Lemma~\ref{lem:bigblockC}. For $k\geq 1$ define $T_k(z)$ as the weighted generating function of rooted $3$-connected planar
graphs with at least $4$ vertices and at most $k$ edges,
where $z$ marks the number of non-root edges, with
weight $x$ at non-pole vertices (hence $T(z)=\lim_{k\to\infty}T_k(z)$).
And define $D_k\equiv D_k(z)$ as the weighted generating function of planar networks with weight $x$ at vertices, and where all $3$-connected components ($T$-bricks)
have at most $k$ edges. Then clearly
$$
D_k=(1+z)\exp\left(\frac{xD_k^2}{1+xD_k}+T_k(D_k)\right)-1,
$$
so $T_k$ and $D_k$ are related by the same equation as $T$ with $D$.
Note that the functional inverse of $D$ is the function $\phi(u)=(u+1)\exp(-xu^2/(1+u)-T(u))-1$
and the functional inverse of $D_k$ is the function $\phi_k(u)=(u+1)\exp(-xu^2/(1+u)-T_k(u))-1$.
The arguments are then the same as in the proof of Lemma~\ref{lem:bigblockC}: one defines $u_k=R(1+1/(k\log(k)))$,
where $R$ is the radius of convergence of $\phi(u)$
(it is proved in~\cite{BeGa} that $R$ is also the radius of convergence
of $T(u)$ and that $a=\phi'(R)$ is strictly positive),
and one proves that for $k$ large enough and $n\geq 0$,
$$
[z^n]D_k\leq 2\left(\rho+\frac{a}{2k\log(k)} \right)^{-n},
$$
where $\rho=\phi(R)$ is the radius of convergence of $D(z)$. One concludes the proof using the
fact, proved in~\cite{BeGa}, that $[z^n]D(z)=\Theta(\rho^{-n}n^{-5/2})$.
\end{proof}

Note that Lemma~\ref{lem:big3conn} directly gives the lower bound in Theorem~\ref{theo:Nm},
using the fact (proved in Theorem~\ref{theo:D3cgraph})
that the diameter of a random 3-connected planar graph
of size $k$ is at least $k^{1/4-\e}$ a.a.s.\ with exponential rate.

The rest of the section is now devoted to the proof of the upper bound  in Theorem~\ref{theo:Nm}.
Let $D$ be a random planar network with $m$ labelled edges and weight $x>0$ at vertices,
let $G$ be the 2-connected planar graph obtained by connecting the poles of $D$, and
let $\tau$ be the RMT-tree of $G$.
To show that $\D(\tau)\leq n^{\e}$
 we need to extend Lemma~\ref{lem:height} to vectorial equation systems.
Assume $\vy\equiv (y_1(z),\ldots,y_r(z))$
 satisfies an equation of the form
\begin{equation}\label{eq:tree_vec}
\vy=\vF(z,\vy),
\end{equation}
with $\vF(z,y)$ an $r$-vector of bivariate functions $F_{i}(z,\vy)$ each with nonnegative coefficients,  analytic around $(0,0)$, with $F_i(0,y)=0$.
Assume also that at least one of the $F_{i}$ is
 nonaffine in one of the $y_j$s, and that the dependency graph for $\vF$ (i.e., there is an edge from $i$ to $j$ if $\partial_i F_{j}\neq 0$)
is strongly connected.
The two latter conditions imply that $\vy(\rho)$ is
finite; let $\vtau=\vy(\rho)$. Define $\Jac \vF(z,\vy)$ as the $r\times r$ matrix $M=(M_{i,j})$ of formal power series in $(z,\vy)$
where $M_{i,j}=\partial_iF_j$.
Equation~\ref{eq:tree_vec} is called \emph{critical} if the largest eigenvalue of
$\Jac(\rho,\vtau)$ (which is a real number by the Perron Frobenius theory) is strictly smaller than $1$,
which is also equivalent to the fact that $\vy'(z)$ converges at $\rho$.

Assume that, for $i\in[1..r]$, $y_i(z)$ is the weighted generating function of a combinatorial class $\cG_i$.
A \emph{height-parameter} for~\eqref{eq:tree_vec} is a parameter $\xi$ for the classes $\cG_i$ such that, if we define
$$
y_{i,h}(z)=\sum_{\alpha\in\cG_i,\ \xi(\alpha)\leq h}w(\alpha)z^{|\tau|},\ \ \ \yhv=(y_{1,h},\ldots,y_{r,h}),
$$
then we have
$$
\vy_{h+1}=\vF(z,\vy_h)\ \ \mathrm{for}\ h\geq 0,\ \ \vy_0=0.
$$

As an easy extension of Lemma~\ref{lem:height2} relying on standard arguments of 
 the Perron-Frobenius theory, one has the following extension of Lemma~\ref{lem:height2}:

\begin{lemma}\label{lem:height3}
Let $\cT$ be a combinatorial class endowed with a weight-function $w(\cdot)$ so that the corresponding (weighted) generating function
$y(z)$ is the first component of a vector $\vy=(y_1(z),\ldots,y_r(z))$ of generating functions
 satisfying an equation~\eqref{eq:tree_vec} that is critical.

Let $\xi$ be a height-parameter for~\eqref{eq:tree_vec} and let $T_n$ be
taken at random in $\cT_n$ under the weighted distribution in size
$n$. Assume that $[z^n]y(z)=\Omega(n^{-\alpha}\rho^{-n})$ for some $\alpha$.
Then $\xi(T_n)\leq n^{\e}$ a.a.s.\ with
exponential rate.
\end{lemma}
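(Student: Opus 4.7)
The plan is to follow the template of Lemma~\ref{lem:height2}, replacing the scalar Mean Value Theorem by an integrated-Jacobian argument, and invoking Perron--Frobenius to control the spectral side.  Write $\vtau_h=\vy_h(\rho)$; nonnegativity of the coefficients of $\vF$ and induction give that $\vtau_h$ is coordinatewise nondecreasing and converges to $\vtau=\vy(\rho)$.  From the defining iteration one obtains
\begin{equation*}
\vtau_{h+1}-\vtau_h \;=\; \vF(\rho,\vtau_h)-\vF(\rho,\vtau_{h-1}) \;=\; M_h\,(\vtau_h-\vtau_{h-1}),
\end{equation*}
where $M_h=\int_0^1\Jac \vF\bigl(\rho,\vtau_{h-1}+t(\vtau_h-\vtau_{h-1})\bigr)\,dt$ is a nonnegative matrix whose entries tend to those of $M:=\Jac\vF(\rho,\vtau)$.

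Next I apply Perron--Frobenius to $M$.  Strong connectivity of the dependency graph of $\vF$ makes $M$ nonnegative and irreducible, so it has a strictly positive left eigenvector $\mathbf{v}$ for the eigenvalue $\lambda$ equal to its spectral radius; by criticality, $\lambda<1$.  Fix any $\lambda'\in(\lambda,1)$.  Entrywise convergence $M_h\to M$ and the positivity of $\mathbf{v}$ ensure $\mathbf{v}^\top M_h\le\lambda'\,\mathbf{v}^\top$ coordinatewise for $h$ large enough.  Multiplying the identity above on the left by $\mathbf{v}^\top$ yields
\begin{equation*}
\mathbf{v}^\top(\vtau_{h+1}-\vtau_h) \;\le\; \lambda'\,\mathbf{v}^\top(\vtau_h-\vtau_{h-1}),
\end{equation*}
so $\mathbf{v}^\top(\vtau_{h+1}-\vtau_h)=O\bigl((\lambda')^h\bigr)$.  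Since every coordinate of $\mathbf{v}$ is strictly positive and every coordinate of $\vtau_{h+1}-\vtau_h$ is nonnegative, each coordinate is individually $O(\exp(-ch))$ with $c=-\log\lambda'>0$.

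With this exponential decay of $y_{1,h}(\rho)-y_{1,h-1}(\rho)$ established, the rest mirrors Lemma~\ref{lem:height2}.  The bivariate series $y_1(z,u)=\sum_h\bigl(y_{1,h}(z)-y_{1,h-1}(z)\bigr)u^h$ converges at $(\rho,u_0)$ for every $u_0<e^c$ by a geometric-series comparison; in particular one can pick $u_0>1$.  Combined with the hypothesis $[z^n]y_1(z)=\Omega(n^{-\alpha}\rho^{-n})$, Lemma~\ref{lem:tail} delivers $\xi(T_n)\le n^\e$ a.a.s.\ with exponential rate.

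The main obstacle I foresee is the Perron--Frobenius step, which carries the bulk of the novelty compared to the scalar case.  Specifically, one must verify that irreducibility of $M$ really follows from strong connectivity of the dependency graph, and that the one-sided spectral bound $\mathbf{v}^\top M_h\le\lambda'\,\mathbf{v}^\top$ can be pushed uniformly through the perturbation $M_h\to M$.  Both points are manageable: irreducibility is immediate from the graph-theoretic hypothesis, and since $\mathbf{v}$ is fixed and positive, the inequality against $\lambda'\mathbf{v}^\top$ is stable under entrywise convergence, so one avoids the well-known pitfalls of perturbing irreducibility itself.
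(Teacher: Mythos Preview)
Your proposal is correct and is precisely the ``easy extension of Lemma~\ref{lem:height2} relying on standard arguments of the Perron--Frobenius theory'' that the paper invokes without spelling out: the integrated Jacobian replaces the scalar Mean Value Theorem, and the positive left Perron eigenvector of $\Jac\vF(\rho,\vtau)$ yields the geometric decay of $\vtau_{h+1}-\vtau_h$, after which Lemma~\ref{lem:tail} concludes exactly as in Lemma~\ref{lem:height2}. One could even shortcut the perturbation step by noting that, since $\Jac\vF$ has nonnegative coefficients and $\vtau_{h-1},\vtau_h\le\vtau$ coordinatewise, one has $M_h\le M$ entrywise and hence $\mathbf v^\top M_h\le\mathbf v^\top M=\lambda\,\mathbf v^\top$ directly.
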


\begin{lemma}\label{lem:diamtaunetwork}
For $0<a<b$, the RMT-tree $\tau$ of a random planar network with $m$ (labelled) edges
and weight $x$ at vertices has diameter
at most $m^{\e}$ a.a.s.\ with exponential rate, uniformly over $x\in[a,b]$.
\end{lemma}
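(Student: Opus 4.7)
The plan is to adapt the strategy of Lemma~\ref{lem:diamtau} to the vectorial setting, using Lemma~\ref{lem:height3} applied to the system~\eqref{eq:syst_networks} that recursively describes planar networks. The RMT-tree of the completed 2-connected graph $G=D+e$ is naturally rooted at the leaf corresponding to the added edge $e$, so define the \emph{brick-height} $h(\tau)$ as the maximum number of bricks encountered along a root-to-leaf path. Since $\tau$ is bipartite (its inner nodes, the bricks, alternate with the virtual edges), and since the RMT-decomposition does not allow two adjacent R-bricks nor two adjacent M-bricks, any path in $\tau$ uses $O(h(\tau))$ bricks; hence $\D(\tau)\leq C\cdot h(\tau)$ for some absolute constant $C$. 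It is therefore enough to prove that $h(\tau)\leq m^\e$ a.a.s.\ with exponential rate.

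Next I would check that the brick-height is a height-parameter for the system~\eqref{eq:syst_networks}. Writing $\vy=(D,S,P,H)$, the truncated series $\vy_h=(D_h,S_h,P_h,H_h)$ counting networks whose root component has brick-height at most $h$ satisfies the obvious recursive relations obtained by substituting $\vy_{h-1}$ into the right-hand side (each top-level decomposition step adds one level of bricks), so indeed $\vy_{h+1}=\vF(z,\vy_h)$ with $\vy_0=0$. The dependency graph of $\vF$ is strongly connected (each series, parallel, and polyhedral block can contain any type of network as a subcomponent), and the system is nonaffine via $H=T(D)$ and the exponential in the $P$ equation.

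It then remains to verify that the system is critical and that $[z^m]D(z)=\Omega(m^{-\alpha}\rho^{-m})$ for some $\alpha$. Both facts follow from~\cite{BeGa}: there it is shown that, with $\phi(u)=(u+1)\exp(-xu^2/(1+xu)-T(u))-1$ the functional inverse of $D$, the singularity of $D(z)$ comes from the singularity of $T$ and not from the vanishing of $\phi'$, i.e., $\phi'(R)>0$. This is equivalent to $D'(\rho)$ being finite, which in turn is equivalent to the criticality condition of Lemma~\ref{lem:height3} (dominant eigenvalue of $\Jac\vF(\rho,\vtau)$ strictly less than $1$). The same reference gives $[z^m]D(z)=\Theta(\rho^{-m}m^{-5/2})$. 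Applying Lemma~\ref{lem:height3} yields $h(\tau)\leq m^\e$ a.a.s.\ with exponential rate, hence $\D(\tau)\leq m^\e$ a.a.s.\ with exponential rate.

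The main technical obstacle is the uniformity in $x\in[a,b]$: one must observe that the radius of convergence $\rho=\rho(x)$, the limit value $\vtau(x)$, the dominant eigenvalue of $\Jac\vF(\rho(x),\vtau(x))$, and the constants in $[z^m]D(z)=\Theta(\rho^{-m}m^{-5/2})$ all depend continuously on $x$ over the compact interval $[a,b]$ (again by~\cite{BeGa}, where the singularity analysis is carried out with $x$ as a parameter). Since $[a,b]$ is compact and each of these quantities stays away from its critical threshold uniformly, the geometric decay of $\utau_h:=\tau_h-\tau_{h-1}$ that drives Lemma~\ref{lem:height3} holds with a rate $c=c(a,b)>0$ uniform in $x\in[a,b]$, yielding the claimed uniformity.
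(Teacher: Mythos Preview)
Your approach is essentially the paper's, and the overall structure is correct. There is, however, one technical inaccuracy worth fixing: the brick-height is \emph{not} a height-parameter for system~\eqref{eq:syst_networks} as written. The trouble is the equation $S=(z+P+H)\,xD$: it only peels off the \emph{first} component of a series network, so one iteration does not correspond to descending one level of bricks in the RMT-tree (an $S$-network with $k$ components in series needs $k$ iterations to unfold, yet contributes a single R-brick). Likewise the equation $D=z+S+P+H$ adds no brick at all. Consequently, if you define $\vy_h$ by blindly substituting $\vy_{h-1}$ into the right-hand side of~\eqref{eq:syst_networks}, the resulting $S_h$ does \emph{not} count $S$-networks of brick-height at most $h$. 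The paper avoids this by passing to the equivalent three-variable system~\eqref{eq:syst_networks2} in $(S,P,H)$, where the $S$-equation reads $S=x(z+P+H)^2/(1-x(z+P+H))$; this expresses an $S$-network directly as a full chain of non-$S$ components, and for \emph{that} system the brick-height is genuinely a height-parameter. With this change your argument goes through verbatim, including the criticality check via~\cite{BeGa} and the uniformity in $x\in[a,b]$.

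A minor aside: your justification for $\D(\tau)\leq C\cdot h(\tau)$ invokes the ``no adjacent R-bricks or M-bricks'' rule, which is not what is needed here. The bound follows simply because any path in a rooted tree decomposes into two root-to-node paths; the paper records $\D(\tau)\leq 2h(\tau)+4$.
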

\begin{proof}
Let $B$ be an edge-rooted $2$-connected planar graph, and $\tau$ the associated RMT-tree, rooted at the leaf corresponding to the root-edge of $B$.
Define the \emph{brick-height}
$h(\tau)$ of $\tau$ as the maximal number of bricks (nodes of type R, M, or T) over all paths starting from the root.
Clearly $\D(\tau)\leq 2h(\tau)+4$.
In addition the brick-height is clearly a height-parameter for the equation-system
\begin{equation}\label{eq:syst_networks2}
\left\{
\begin{array}{lll}
S&=&\frac{x(z+P+H)^2}{1-x(z+P+H)},\\
P&=&(1+z)\exp(S+H)-1-z-S-H,\\
H&=&T(z+S+P+H).
\end{array}
\right.
\end{equation}
which is equivalent to~\eqref{eq:syst_networks}.
Moreover it follows from the results in~\cite{BeGa}
that~\eqref{eq:syst_networks2} is critical
(e.g. because the derivative of the generating function of planar networks converges
at the dominant singularity).
Hence the brick-height of a random planar network with $m$ labelled edges
and weight $x$ at vertices has diameter at most $m^{\e}$ a.a.s.\ with exponential rate,
and the calculations are readily checked to hold uniformly over $x\in[a,b]$.
\end{proof}

\begin{lemma}\label{lem:diam_bricks}
Let $0<a<b$, and let $x\in[a,b]$.
Let $D$ be a random 2-connected planar graph with $m$ labelled edges and weight $x$ at vertices.
Let $G$ be the 2-connected planar graph obtained by connecting the two poles of $D$, and
let $B_1,\ldots,B_k$ be the bricks of $G$. Then $\mathrm{max}(\D(B_i))\leq n^{1/4+\e}$
a.a.s.\ with exponential rate, uniformly over $x\in[a,b]$.
\end{lemma}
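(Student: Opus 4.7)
The plan is to control the maximum brick diameter by distinguishing the three brick types. M-bricks are multi-edges with diameter $1$ and cannot affect the maximum. R-bricks are cycles of some size $s$ with diameter $\lfloor s/2\rfloor$; T-bricks are 3-connected planar graphs with $\ge 4$ vertices. By the recursive structure encoded in~\eqref{eq:syst_networks}, conditioned on the shape of the RMT-tree and on the individual brick sizes, each T-brick of size $k$ is uniformly distributed among edge-rooted 3-connected planar graphs of size $k$ with the inherited weight $x$ at vertices not incident to the root edge. The analysis therefore reduces to bounding T-brick diameters via Theorem~\ref{theo:D3cgraph} and bounding R-brick sizes via an exponential tail estimate.

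For T-bricks I would split on size. If the brick size is $k\le m^{1/5}$, then trivially $\D(B)\le k\le m^{1/4+\e}$. If $k>m^{1/5}$, Theorem~\ref{theo:D3cgraph} gives $\D(B)\le k^{1/4+\e'}\le m^{1/4+\e'}$ for any $\e'\le\e$ with failure probability at most $\exp(-k^{c\e'})\le\exp(-m^{c\e'/5})$. A union bound over the at most $m$ T-bricks is still exponentially small in $m^{c\e'/5}$, and uniformity in $x\in[a,b]$ is inherited directly from Theorem~\ref{theo:D3cgraph}.

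For R-bricks, since $\D(B)=\lfloor|B|/2\rfloor$, it suffices to prove the stronger bound $\max_{R\text{-bricks}}|B|\le m^{\e}$ a.a.s.\ with exponential rate, and this is the main obstacle. Setting $N(z):=z+P(z)+H(z)$, the second and third lines of~\eqref{eq:syst_networks} together with the maximality condition that no two adjacent nodes of the RMT-tree are R-bricks yield $S=\sum_{s\ge 2}x^{s-1}N^s$, with each R-brick corresponding to such a chain and having size $\Theta(s)$. The finiteness $D(\rho)=N(\rho)/(1-xN(\rho))<\infty$ established in~\cite{BeGa} forces $xN(\rho)<1$. By marking R-brick sizes with a secondary variable $u$ and iterating the decomposition, one builds a bivariate generating function $D^{\bullet}(z,u)$ of planar networks with a distinguished R-brick whose size is recorded by $u$; the geometric-series expression above shows that $D^{\bullet}(\rho,u_0)$ converges for any $u_0<1/(xN(\rho))$, in particular for some $u_0>1$. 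Lemma~\ref{lem:tail}, applied via Markov's inequality to the expected number of R-bricks of size at least $k$, then yields an $O(u_0^{-k})$ bound on the probability of some R-brick having size $\ge k$; taking $k=m^{\e}$ makes this exponentially small. Uniformity in $x\in[a,b]$ follows from the continuity of $xN(\rho(x))$ and its uniform distance below $1$ on any compact subinterval of $(0,\infty)$.

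Combining the two steps gives $\max_i\D(B_i)\le m^{1/4+\e}$ a.a.s.\ with exponential rate, uniformly in $x\in[a,b]$. The T-brick step is essentially a routine application of Theorem~\ref{theo:D3cgraph} together with a union bound; the bulk of the work is the R-brick tail bound, where the key technical step is to set up the bivariate GF with the right marking so that the ``no two adjacent R-bricks'' constraint translates cleanly into a geometric series with ratio $xN(\rho)<1$.
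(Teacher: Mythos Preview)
Your proposal is correct and follows essentially the same route as the paper: you split by brick type, dismiss M-bricks trivially, handle T-bricks via Theorem~\ref{theo:D3cgraph} together with a size-splitting union bound, and bound R-brick sizes by a bivariate generating function argument plus Lemma~\ref{lem:tail}. The paper does exactly this, writing the bivariate series for networks with a marked R-brick as $A(z,u)=\log\bigl(1/(1-ux(D(z)-S(z)))\bigr)$ and noting that $x(D(\rho)-S(\rho))<1$ because $S(\rho)$ converges; your quantity $N(z)=z+P(z)+H(z)$ is precisely $D(z)-S(z)$, and your criterion $xN(\rho)<1$ is the same condition derived from the same source.
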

\begin{proof}
Consider a brick $B_i$ of $G$. If $B_i$ is 3-connected and conditioned to have $m_i$ edges, $B_i$
 is a random 3-connected planar graph with $m_i$ edges and weight $x$ at vertices.
Hence, according to Theorem~\ref{theo:D3cgraph}, the diameter of $B_i$ is at most $m^{1/4+\e}$
a.a.s.\ with exponential rate (uniformly over $x\in[a,b]$). Now a brick $B_i$
can also be a multiedge-graph, in which case $\D(B_i)=1$, or can be a ring-graph (polygon)
with diameter $\lfloor m_i/2\rfloor$ (with $m_i$ the number of edges of $B_i$).
So it remains to show that the largest R-brick of $G$ is of size at most $m^{\e}$
a.a.s.\ with exponential rate (uniformly over $x\in[a,b]$). Let $A(z,u)$ be the generating
function of 2-connected planar graphs with a marked oriented R-brick, where
$z$ marks the number of edges, $u$ marks the size of the marked R-brick, and
with weight $x$ at vertices. Clearly $A(z,u)$ is given by
$$
A(z,u)=\log\left(\frac{1}{1-ux(D(z)-S(z))}\right).
$$
Let $\rho$ be the radius of convergence of $D(z)$. Note that $S(z)=x(D(z)-S(z))^2/(1-x(D(z)-S(z))$.
Since $S(z)$ converges at $z=\rho$ (as proved in~\cite{BeGa}), we have $x(D(\rho)-S(\rho))<1$,
so that $A(z,u)$ is finite for $z=\rho$ and $u$ in a neighborhood of $1$.
Hence by Lemma~\ref{lem:tail}, the distribution of the size of the marked R-brick has exponentially fast
decaying tail. This ensures in turn that the largest R-brick is of size at most $m^{\e}$
a.a.s.\ with exponential rate. And the estimates are readily checked to hold
uniformly for $x\in[a,b]$.
\end{proof}

Consider the following parameter $\chi$ defined recursively for each planar network $N$:
\begin{itemize}
\item
If $N$ is reduced to a single edge, then $\chi(N)=1$.
\item
If $N$ is made of several planar networks $N_1,\ldots,N_k$ in parallel or in series, then
$\chi(N)=\chi(N_1)+\cdots+\chi(N_k)$.
\item
If $N$ has a 3-connected core $T$, and if $N_1,\ldots,N_k$ are the planar networks
substituted at the edges of the outer face of $T$, then $\chi(N)=\chi(N_1)+\cdots+\chi(N_k)$.
\end{itemize}
It is easy to check recursively that $\chi(N)$ is at least the distance between the two poles of $N$.
For each $x>0$, denote by $D(z,u)$ (resp. $S(z,u)$, $P(z,u)$, $H(z,u)$)
the bivariate generating function of planar networks (resp. series-networks, parallel networks, polyhedral
networks) where $z$ marks the
number of edges, $u$ marks the parameter $\chi$, and with weight $x$ at each non-pole vertex.
Let $T(z,u)$ be the series of edge-rooted 3-connected planar graphs
where $z$ marks the number of non-root edges and $u$ marks the number
of non-root edges incident to the outer face, and
with weight $x$ at each vertex not incident to the root-edge.
Then (with $D(z)=D(z,1)$):
\begin{equation}\label{eq:syst_biv}
\left\{
\begin{array}{lll}
D(z,u)&=&zu+S(z,u)+P(z,u)+H(z,u),\\
S(z,u)&=&(zu+P(z,u)+H(z,u))xD(z,u),\\
P(z,u)&=&(1+zu)\exp(S(z,u)+H(z,u))-1-zu-S(z,u)-H(z,u),\\
H(z,u)&=&T(D(z),D(z,u)/D(z)).
\end{array}
\right.
\end{equation}
which coincides with~\eqref{eq:syst_networks} for $u=1$.

\begin{lemma}\label{lem:dist_pole_network}
For  each $x>0$, let $\rho$ be the radius of convergence of $D(z,1)$.
Then there exists $u_0>1$ such that the generating function $D(\rho,u_0)$ converges.
 In addition, for $0<a<b$ there exists some value $u_0>1$ and some constant
$C>0$ that works uniformly over $x\in[a,b]$,
and such that $D(\rho,u_0)<C$ for $x\in[a,b]$.
\end{lemma}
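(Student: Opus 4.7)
The plan is to eliminate $S$, $P$, $H$ from the system~\eqref{eq:syst_biv} and reduce to a single implicit equation for $D_u := D(\rho, u)$, then solve it by a contraction argument in a neighborhood of $u = 1$ that extends to some $u_0 > 1$.

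Combining the four equations of~\eqref{eq:syst_biv} algebraically (using $S = xD^2/(1+xD)$ and $S+H = \log(1+D) - \log(1+zu)$) yields the single relation
\[ D(z, u) = (1 + zu) \exp\left( \frac{xD(z,u)^2}{1+xD(z,u)} + T\left(D(z,1), \frac{D(z,u)}{D(z,1)}\right)\right) - 1. \]
Setting $z = \rho$ and writing $R := D(\rho, 1)$, which coincides with the radius of convergence of $T(\cdot, 1)$ by the results of~\cite{BeGa} recalled in the proof of Lemma~\ref{lem:big3conn}, the equation becomes $D_u = G(D_u; u)$ where
\[ G(D; u) := (1 + \rho u) \exp\left(\frac{xD^2}{1+xD} + T(R, D/R)\right) - 1, \]
with fixed point $D = R$ at $u = 1$. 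To make $G$ well-defined for $u$ slightly above $1$, I would invoke Lemma~\ref{lem:root-face-degree-3-conn} (transferred from $3$-connected maps to $3$-connected planar graphs via Whitney's theorem) to ensure that $v \mapsto T(R, v)$ is analytic in a disk of radius strictly greater than $1$; in particular $T_v(R, 1) < \infty$.

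The next step is to show $G(\,\cdot\,; 1)$ is a strict contraction at $D = R$. Using the identity $(1+\rho)\exp(xR^2/(1+xR) + T(R,1)) = 1 + R$, a short computation yields
\[ \partial_D G(R; 1) = (1+R)\left(\frac{xR(2+xR)}{(1+xR)^2} + \frac{T_v(R, 1)}{R}\right). \]
The key observation is that $T_v(R, 1)/R \leq T'(R)$ coefficient-wise, because for each monomial $t_{n,k} w^n v^k$ of $T$ the number of outer-face non-root edges $k$ is at most the total number of non-root edges $n$. Combined with the criticality statement from~\cite{BeGa} (equivalently $\phi'(R) > 0$, where $\phi$ is the functional inverse of $D(z, 1)$ used in Lemma~\ref{lem:big3conn}), which reads $(1+R)(xR(2+xR)/(1+xR)^2 + T'(R)) < 1$, this gives $\partial_D G(R; 1) < 1$ strictly.

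By continuity one then finds $\epsilon > 0$ and $u_0 > 1$ such that $G(\,\cdot\,; u)$ maps $[R, R+\epsilon]$ into itself and is a strict contraction there for all $u \in [1, u_0]$, producing a unique fixed point $D(\rho, u) \in [R, R+\epsilon]$ and hence $D(\rho, u_0) < R + \epsilon < \infty$. Uniformity over $x \in [a, b]$ follows because $\rho$, $R$, $T'(R)$, $T_v(R, 1)$ and the criticality gap depend continuously on $x$ and stay uniformly bounded away from the degenerate values on the compact interval (again by~\cite{BeGa}), so the same $u_0 > 1$ and bound $C$ work uniformly. I expect the main technical obstacle to be the Whitney transfer in the first step: adapting Lemma~\ref{lem:root-face-degree-3-conn} (originally for root-face degrees in $3$-connected maps) to the present $T(z,u)$ (which marks non-root outer-face edges in $3$-connected planar graphs), keeping track of the $2$-to-$1$ embedding correspondence and the mild re-labeling of $u$.
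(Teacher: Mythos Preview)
Your argument is correct and rests on the same two inputs as the paper's proof: the analyticity of $v\mapsto T(R,v)$ past $v=1$ (Lemma~\ref{lem:root-face-degree-3-conn}, transferred via Whitney) and the criticality result $\phi'(R)>0$ from~\cite{BeGa}. The paper's version is shorter and more abstract: instead of eliminating $S,P,H$ to get a scalar fixed-point equation, it keeps the full system~\eqref{eq:syst_biv} and simply observes that at $u=1$ it coincides with~\eqref{eq:syst_networks2}, whose Jacobian at $(\rho,\vtau)$ has spectral radius strictly less than $1$ by~\cite{BeGa}; continuity in $u$ then gives the same conclusion for $u$ slightly above~$1$. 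Your scalar route buys an explicit, self-contained computation of the contraction constant, and your combinatorial inequality $T_v(R,1)/R\leq T'(R)$ (from ``outer-face non-root edges $\leq$ all non-root edges'') is a clean way to link the bivariate derivative back to the univariate criticality $\phi'(R)>0$; the paper never needs to isolate this inequality because it cites the vectorial eigenvalue bound directly. The Whitney transfer you flag as a potential obstacle is indeed minor (factor-of-$2$ correspondence plus a harmless shift in what $u$ marks), and the paper glosses over it as well.
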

\begin{proof}
Let $R=T(D(\rho),1)$. As shown in~\cite{BeGa}, $R$ is the radius of convergence of $w\to T(w,1)$.
In addition, Lemma~\ref{lem:root-face-degree-3-conn}
 ensures that there is some $v_0>1$ such that  $T(R,v_0)$ converges.
It follows from the results in~\cite{BeGa} that, at $z=\rho$  the largest
eigenvalue of the Jacobian matrix of~\eqref{eq:syst_networks2}
is strictly smaller than $1$. Hence by continuity,
at $z=\rho$ the largest eigenvalue of the Jacobian matrix of~\eqref{eq:syst_biv}
is strictly smaller than $1$ in a neighborhood of $u=1$. Hence $D(\rho,u)$ converges for $u$ close to $1$.
Finally, the uniformity of the statement for $x\in[a,b]$ follows from the uniformity over $x\in[a,b]$
in Lemma~\ref{lem:root-face-degree-3-conn} and from the
 fact that~\eqref{eq:syst_biv} is continuous according to~$x$.
\end{proof}

Let $G$ be a 2-connected planar graph with a marked virtual edge $e=\{v,v'\}$.
The edge $e$ corresponds to an edge $e^*$ in the RMT-tree connecting
two nodes $\nu_1$ and $\nu_2$. The subtree of the RMT-tree hanging from $\nu_1$ (resp. $\nu_2$)
corresponds to a planar network $N_1$ (resp. $N_2$).
Define $\wchi(G)=\chi(N_1)+\chi(N_2)$. Clearly $\wchi(G)$ is an upper bound
on the distance (in $G$) between $v$ and $v'$.
We denote by $G(z,u)$ the generating function of 2-connected planar graphs
with a marked virtual edge, where $z$ marks the number of edges and $u$
marks the parameter $\wchi$.
Looking at the possible types for the nodes $\nu_1$ and $\nu_2$,
we obtain (the terms $S(z,u)^2$
and $P(z,u)^2$ do not appear since there are no adjacent R-nodes nor adjacent M-nodes
in the RMT-tree):
$$
G(z,u)=2S(z,u)P(z,u)+2S(z,u)H(z,u)+2P(z,u)H(z,u)+H(z,u)^2.
$$

\begin{lemma}\label{lem:dist_pole_2conn_marked_virtual_edge}
For  each $x>0$, let $\rho$ be the radius of convergence of $G(z,1)$.
Then there exists $u_0>1$ such that the generating function $G(\rho,u_0)$ converges.
 In addition, for $0<a<b$ there is some value $u_0>1$ that works uniformly over $x\in[a,b]$,
and such that $G(\rho,u_0)=O(1)$ for $x\in[a,b]$.
\end{lemma}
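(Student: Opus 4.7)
The plan is to reduce the statement to Lemma~\ref{lem:dist_pole_network} by exploiting the fact that $G(z,u)$ is a quadratic polynomial in $S(z,u), P(z,u), H(z,u)$, each of which is coefficient-wise dominated by $D(z,u)$.

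First I would read off from the first line of~\eqref{eq:syst_biv} that $D(z,u) = zu + S(z,u) + P(z,u) + H(z,u)$ with all series involved having nonnegative coefficients (in both $z$ and $u$), so that $S(z,u), P(z,u), H(z,u) \preceq D(z,u)$. By Lemma~\ref{lem:dist_pole_network}, there exist $u_0>1$ and $C>0$, both independent of $x\in[a,b]$, such that $D(\rho_D,u_0)<C$ for all $x\in[a,b]$, where $\rho_D$ denotes the radius of convergence of $z\mapsto D(z,1)$. Coefficient-wise domination then gives
\[
S(\rho_D,u_0),\ P(\rho_D,u_0),\ H(\rho_D,u_0) \le C,
\]
and the explicit formula $G(z,u)=2S(z,u)P(z,u)+2S(z,u)H(z,u)+2P(z,u)H(z,u)+H(z,u)^2$ yields immediately $G(\rho_D,u_0)\le 7C^2$, uniformly over $x\in[a,b]$.

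Second, I would identify $\rho$ (the radius of convergence of $G(z,1)$) with $\rho_D$. The inequality $\rho\ge\rho_D$ is immediate from the bound above, since $G(z,1)$ has nonnegative coefficients and $G(\rho_D,1)\le G(\rho_D,u_0)<\infty$. For $\rho\le\rho_D$, note that $H(z,1)^2$ appears in $G(z,1)$ with nonnegative coefficients and that $H(z,1)=T(D(z,1),1)$ inherits from the analysis in~\cite{BeGa} a (square-root) singularity at $\rho_D$; hence $H(z,1)^2$ also has its dominant singularity at $\rho_D$, so $G(z,1)$ cannot be analytic beyond $\rho_D$. This gives $\rho=\rho_D$, and the bound $G(\rho,u_0)=O(1)$ uniformly over $x\in[a,b]$ follows.

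The only non-routine step is the identification $\rho=\rho_D$, but once it is made the rest reduces to plugging in Lemma~\ref{lem:dist_pole_network}; uniformity in $x\in[a,b]$ is inherited verbatim from Lemma~\ref{lem:dist_pole_network}, since the final estimate depends on $x$ only through the uniform constants $u_0$ and $C$ supplied by that lemma.
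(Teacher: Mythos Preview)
Your proof is correct and follows essentially the same route as the paper: the paper's own argument simply notes that the expression of $G(z,u)$ as a polynomial in $S,P,H$ forces $\rho=\rho_D$ and lets the convergence of $G(\rho,u_0)$ be inherited directly from Lemma~\ref{lem:dist_pole_network}. You have merely spelled out in more detail the two steps the paper leaves implicit (coefficient-wise domination $S,P,H\preceq D$, and the identification $\rho=\rho_D$ via the singularity of $H(z,1)$).
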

\begin{proof}
First the expression of $G(z,u)$ in terms of the generating functions of planar networks ensures
that $\rho$ is the radius of convergence of $D(z,1)$, and that the property for $G(z,u)$
is just inherited from the same property satisfied by $D(z,u)$ (and the other
network generating functions $S(z,u)$, $P(z,u)$, $H(z,u)$) that has
been proved in Lemma~\ref{lem:dist_pole_network}.
\end{proof}

\begin{lemma}\label{lem:maxde}
For $0<a<b$ and $x\in[a,b]$,
let $D$ be a random planar network with $m$ (labelled) edges and weight $x$ at vertices.
Let $G$ be the 2-connected planar graph obtained by connecting the pole of $D$.
For each virtual edge $e=\{u,v\}$ of $G$, let $d_e$ be the distance in $G$ between $u$ and $v$, and let
$\dmax$ be the maximum of $d_e$ over all virtual edges of $G$. Then $\dmax\leq m^{\e}$ a.a.s.\ with exponential rate, uniformly over $x\in[a,b]$.
\end{lemma}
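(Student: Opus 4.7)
The plan is to derive the desired estimate from Lemma~\ref{lem:dist_pole_2conn_marked_virtual_edge} by combining it with Lemma~\ref{lem:tail} to obtain exponential tails on $\wchi$ for a single (randomly marked) virtual edge, and then union-bounding over all $O(m)$ virtual edges. Since $d_e\leq \wchi(G,e)$ for each virtual edge $e$ (as already noted just before Lemma~\ref{lem:dist_pole_2conn_marked_virtual_edge}), it suffices to prove that the maximum of $\wchi$ taken over all virtual edges of $G$ is at most $m^{\e}$ a.a.s.\ with exponential rate, uniformly over $x\in[a,b]$.

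To that end I would apply Lemma~\ref{lem:tail} to the bivariate series $G(z,u)$ of 2-connected planar graphs with a marked virtual edge (with $z$ tracking edges and $u$ tracking $\wchi$). The two hypotheses of Lemma~\ref{lem:tail} are in place: Lemma~\ref{lem:dist_pole_2conn_marked_virtual_edge} provides some $u_0>1$ with $G(\rho,u_0)<\infty$, uniformly over $x\in[a,b]$; while the polynomial lower bound $[z^m]G(z,1)=\Omega(m^{-\alpha}\rho^{-m})$ follows from the standard square-root singular expansion of the network system established in~\cite{BeGa}. Lemma~\ref{lem:tail} then yields that, under the weighted distribution on pairs (2-connected planar graph of size $m$, marked virtual edge), $P(\wchi>k)=O(e^{-ck})$, with constants uniform in $x\in[a,b]$.

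Finally I would transfer this estimate to the unmarked distribution on planar networks via a first-moment (Markov) argument. Let $\eta$ denote the number of virtual edges $e$ of $G$ with $\wchi(G,e)>m^{\e}$. The expected value of $\eta$ under the unmarked distribution equals the average number of virtual edges (which is $O(m)$, since the RMT-tree has $O(m)$ nodes and each internal edge contributes one virtual edge) multiplied by the biased probability $P_{\mathrm{biased}}(\wchi>m^{\e})=O(e^{-c m^{\e}})$ obtained in the previous step. By Markov's inequality,
\[
P(\dmax > m^{\e}) \leq P(\eta\geq 1)\leq E[\eta] = O\bigl(m\cdot e^{-c m^{\e}}\bigr) = O\bigl(e^{-m^{c'\e}}\bigr),
\]
uniformly for $x\in[a,b]$, which gives the claim.

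The main subtlety lies in the bookkeeping that converts the marked-edge distribution (natural for applying Lemma~\ref{lem:tail}) into the unmarked distribution on planar networks (in which the statement is formulated). This rests only on the elementary deterministic fact that the number of virtual edges of the RMT-tree is at most linear in $m$, so it poses no real difficulty; the substantive analytic work --- ensuring that $G(\rho,u_0)$ converges uniformly in $x$ --- has already been carried out in Lemmas~\ref{lem:dist_pole_network} and~\ref{lem:dist_pole_2conn_marked_virtual_edge}.
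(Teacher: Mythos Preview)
Your proposal is correct and follows essentially the same route as the paper: both invoke Lemma~\ref{lem:dist_pole_2conn_marked_virtual_edge} (via Lemma~\ref{lem:tail}) to get an exponential tail for $\wchi$ (hence for $d_e$) at a single marked virtual edge, then use the deterministic bound of $O(m)$ virtual edges to pass to $\dmax$ by a union/first-moment argument. Your write-up is in fact a bit more explicit than the paper's about the bookkeeping (the polynomial lower bound on $[z^m]G(z,1)$ and the marked-to-unmarked conversion), but the substance is identical.
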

\begin{proof}
A planar network $N$ with a marked virtual edge $e$ can be seen as a 2-connected planar graph $G$
rooted at a virtual edge $e=\{u,v\}$ and with a secondary marked edge whose ends
play the role of poles of the planar network.
Let $G$ be a random 2-connected planar graph rooted at a virtual edge, with $m$ edges
and weight $x$ at vertices.
By Lemma~\ref{lem:dist_pole_2conn_marked_virtual_edge},
the distribution of the distance between $u$ and $v$ in $G$ has exponentially
fast decaying tail. Hence, for $N$ a random planar network with $m$ edges, weight $x$ at vertices,
and with a marked virtual edge $e=\{u,v\}$, the distribution of the distance $d_e$ between $u$ and $v$ in $G$ has exponentially
fast decaying tail as well.
In addition it is easy to prove inductively (on the number of nodes in the RMT-tree)
that a planar network with $m$ edges has $O(m)$ virtual edges.
Hence $\dmax\leq m^{\e}$ a.a.s.\ with exponential rate, and the uniformity over $x\in[a,b]$
follows from the uniformity over $x\in[a,b]$ in Lemma~\ref{lem:dist_pole_2conn_marked_virtual_edge}.
\end{proof}

To conclude, Lemmas~\ref{lem:diamtaunetwork},~\ref{lem:diam_bricks}, and~\ref{lem:maxde} together with the inequality~\eqref{eq:ineqprooftheo} yield the upper bound in Theorem~\ref{sec:proof_theo_delayed}.

\section{Diameter estimates for subcritical graph families}\label{sec:subcritical}

We conclude with a remark on so-called ``subcritical'' graph families,
these are the families where the system
\begin{equation}\label{eq:subc}
y=z\exp(B'(y))
\end{equation}
to specify pointed connected from pointed 2-connected graphs in the family
is admissible, i.e., $F(z,y)=z\exp(B'(y))$ is analytic at $(\rho,\tau)$ where $\rho$ is
the radius of convergence of $y=y(z)$ and $\tau=y(\rho)$.
Examples of such families are cacti graphs, outerplanar graphs, and series-parallel graphs.

Define the \emph{block-distance} of a vertex $v$ in a vertex-pointed connected graph $G$
as the minimal number of blocks one can use to travel from the pointed vertex
to $v$; and define the \emph{block-height} of $G$ as the maximum of the
block-distance over all vertices of $G$. With the terminology of
Lemma~\ref{lem:height},
one easily checks that the block-height is a height-parameter 
for~\eqref{eq:subc}. Hence by Lemma~\ref{lem:height}, the block-height $h$ of a
random pointed connected graph $G$ with $n$ vertices
from a subcritical family is in $(n^{1/2-\e},n^{1/2+\e})$
a.a.s.\ with exponential rate. Clearly $\D(G)\geq h-1$ since the distance between two
vertices is at least the block-distance minus 1. Hence $\D(G)\geq n^{1/2-\e}$
a.a.s.\ with exponential rate.
For the upper bound, note that $\D(G)\leq h\cdot\mathrm{max}_i(|B_i|)$], where
the $B_i$'s are the blocks of $G$. Using Lemma~\ref{lem:tail} and the subcritical condition
one easily shows that $\mathrm{max}_i(|B_i|)\leq n^{\e}$ a.a.s.\  with exponential rate.
This implies that $\D(G)\leq n^{1/2+\e}$ a.a.s.\ with exponential rate.
It would be interesting to obtain explicit limit laws (in the scale $n^{1/2}$) for the
diameter of random graphs in subcritical families such as outerplanar graphs and series-parallel graphs.
Such a result has for instance recently been obtained for stacked triangulations~\cite{AlMa08}.

\vspace{.4cm}

\noindent{\bf Additional note.} After this paper was written and reviewed, Ambj\o rn and Budd~\cite{AmBu13} found an explicit expression for the 2-point function of planar (embedded) maps, that could simplify a bit the content of Section~\ref{sec:maps} by avoiding the detour via quadrangulations. Unfortunately this simplification would not affect the other sections (indeed~\cite{AmBu13} does not apply to 2- or 3- connected maps) and thus it would not enable us to get more precise results than the ones we got here.

\bibliographystyle{plain}
\bibliography{diam}

\end{document}